\newtheorem{theorem}{Theorem}[section]
\newtheorem{lemma}{Lemma}[section]
\newtheorem{proposition}{Proposition}[section]
\newtheorem{corollary}{Corollary}[section]
\newtheorem{remark}{Remark}[section]
\newtheorem{convention}{Convention}[section]
\newtheorem{theoremAlphabet}{Theorem}
\newcommand{\ext}{{\rm Ext}}
\newcommand{\Mod}{{\rm Mod}}
\newcommand{\ZZ}{\mathcal{MF}_1}
\newcommand{\cl}[1]{{\rm cl}_{GM}(#1)}
\newcommand{\clred}[1]{{\rm cl}^{red}_{GM}(#1)}
\newcommand{\partialred}{\partial_{GM}^{red}}
\newcommand{\supportMF}[1]{{\rm Supp}(#1)}
\newcommand{\squ}[1]{{\tt USq}(#1)}
\newcommand{\sq}[1]{{\tt Sq}^\infty(#1)}
\newcommand{\sqt}[1]{{\tt sub}^\infty(#1)}
\newcommand{\gromov}[3]{\langle #1\,|\,#2\rangle_{#3}}
\newcommand{\trunc}[1]{{#1}^{\circ}}
\newcommand{\AC}{{\rm AC}}
\newcommand{\ACINV}{{\rm AC_{inv}}}
\newcommand{\ACAS}{{\rm AC_{as}}}
\newcommand{\qi}{{\rm QI}}
\newcommand{\qigroup}{{\mathfrak QI}}
\newcommand{\vani}[1]{Z^{0}_{#1}}
\newcommand{\augVani}[1]{Z_{#1}}
\newcommand{\nullsets}[2]{\mathfrak{N}_{#1}(#2)}
\newcommand{\ACmonoid}{\mathfrak{AC}_{as}}
\newcommand{\ACGroup}{\mathfrak{AC}}
\newcommand{\visualindist}{{\sf Vis}}
\newcommand{\complexity}[1]{{\rm cx}(#1)}
\newcommand{\RR}{\mathbb{R}_{+}^{\mathcal{S}}}
\newcommand{\PR}{{\rm P}\RR}
\newcommand{\extension}[1]{\partial_\infty(#1)}
\newcommand{\proj}{{\rm proj}}
\newcommand{\teich}{\mathcal{T}}
\newcommand{\GmInv}{\mathcal{C}_{GM}}
\newcommand{\GmTeich}{\mathcal{T}_{GM}}
\newcommand{\Gmbdy}{\tilde{\partial}_{GM}}
\newcommand{\curvecomplex}{\mathbb{X}}
\newcommand{\MCG}{{\rm MCG}}
\newcommand{\NMF}{\mathcal{N}_{M\!F}}
\newcommand{\accum}{\mathcal{ACM}}
\newcommand{\height}{{\rm ht}}
\newcommand{\equivalence}[1]{[[#1]]}
\begin{document}

\title[Geometry of the Gromov product]
{Geometry of the Gromov product : Geometry at infinity of Teichm\"uller space}

\author{Hideki \textsc{Miyachi}}
\address{Department of Mathematics,
Graduate School of Science,
Osaka University,
Machikaneyama 1-1,
Toyonaka, Osaka 560-0043, Japan}
\email{miyachi@math.sci.osaka-u.ac.jp}

\subjclass[2010]{Primary 30F60, 54E40,
Secondary 32G15, 37F30, 51M10, 32Q45}

\keywords{Teichm\"uller space, Teichm\"uller distance, Gromov hyperbolic space, Gromov product, Complex of curves, Mapping class group}
%

\maketitle

\begin{abstract}
This paper is devoted to study of transformations on metric spaces.
It is done in an effort to produce qualitative version of quasi-isometries
which takes into account the asymptotic behavior of the Gromov product in hyperbolic spaces.
We characterize a quotient semigroup of such transformations on Teichm\"uller space
by use of simplicial automorphisms of the complex of curves,
and we will see that such transformation is recognized as
a ``coarsification" of isometries on Teichm\"uller space
which is rigid at infinity.
We also show a hyperbolic characteristic that
any finite dimensional Teichm\"uller space does not admit
(quasi)-invertible rough-homothety.
\end{abstract}

\section{Introduction}
\subsection{Backgrounds}
Let $(X,d_X)$ be a metric space.
The \emph{Gromov product}
with reference point $x_0\in X$
is defined by
\begin{equation}
\label{eq:Gromov_product}
\gromov{x_1}{x_2}{x_0}^X
=\gromov{x_1}{x_2}{x_0}=
\frac{1}{2}(d_X(x_0,x_1)+d_X(x_0,x_2)-d_X(x_1,x_2)).
\end{equation}
We define the \emph{Gromov product} of
two sequences ${\bf x}=\{x_{n}\}_{n\in \mathbb{N}}$,
${\bf y}=\{y_{n}\}_{n\in \mathbb{N}}$
in $X$ by
\begin{equation}
\gromov{{\bf x}}{{\bf y}}{x_{0}}=
\liminf_{n,m\to \infty}\gromov{x_n}{y_m}{x_0}
\end{equation}

\begin{convention}
When the metric space and the reference point in the discussion are
clear in the context,
we omit to specify them in denoting the Gromov product.
We always assume in this paper that any metric space is of infinite diamter.
\end{convention}

Let
$\squ{X}\subset X^{\mathbb{N}}$
is the set of unbounded sequences in $X$.
We call a sequence ${\bf x}\in \squ{X}$
\emph{convergent at infinity}
if
\begin{equation}
\label{eq:converging_to_infinity_definition}
\gromov{{\bf x}}{{\bf x}}{}=\infty
\end{equation}
(cf. \S8 in \cite{Gromov}).
Any sequence satisying \eqref{eq:converging_to_infinity_definition} is contained in $\squ{X}$.
The definition \eqref{eq:converging_to_infinity_definition}
is independent of the choice of the reference point.
Let
$$
\sq{X}=\{{\bf x}\in \squ{X}\mid
\gromov{{\bf x}}{{\bf x}}{}=\infty\}.
$$
We say that a sequence ${\bf y}\in X^{\mathbb{N}}$
is \emph{visually indistinguishable} from ${\bf x}\in X^{\mathbb{N}}$
if
\begin{equation}
\label{eq:visually_indistinguishable_definition}
\gromov{{\bf x}}{{\bf y}}{}
=\infty.
\end{equation}
For a sequence ${\bf x}\in X^{\mathbb{N}}$,
we define
$$
\visualindist({\bf x})=\{{\bf y}\in \sq{X}\mid
\mbox{$\gromov{{\bf y}}{{\bf x}}{}
=\infty$}\}.
$$
Notice that
$\visualindist({\bf x})=\emptyset$
when a sequence ${\bf x}$ is bounded.
Two sequences ${\bf x}^1,{\bf x}^2\in \squ{X}$
are said to be \emph{asymptotic}
if $\visualindist ({\bf x}^1)=\visualindist ({\bf x}^2)$.
%
%
%
%
%
%
When $X$ is a Gromov hyperbolic space,
$\visualindist({\bf x})$ defines a point in the Gromov boundary
(cf. \cite{BF}).

In this paper,
aiming for developing the coarse geometry on Teichm\"uller space,
we inverstigate the theory
of mappings on metric spaces
with respecting for asymptotic behavior of sequences converging at infinity
(cf. Tables \ref{table:1} and \ref{table:2}).
Namely,
we (pretend to) recognize that two unbounded sequences  ${\bf x}^{1}$ and ${\bf x}^{2}$
determine the same ideal point at infinity
if two sequences ${\bf x}^{1}$, ${\bf x}^{2}$ converging at infinity
are asymptotic.
Intuitively,
asymptotically conservative mappings given in this paper are mappings keeping
the divergence conditions of the Gromov products of two sequences converging at infinity

\begin{table}
\begin{tabular}{|p{10em}|p{20em}|}\hline
Coarse geometry
&
Geometry on the Gromov product \\ \hline\hline
Quasi-isometries (qi)
&
Asymptotically conservative (ac) mappings\\ \hline
Coarsely Lipschitz
&
weakly ac
\\ \hline
Coarsely co-Lipschitz
&
$\omega({\bf z})\in  \visualindist(\omega({\bf x}))$
$\Rightarrow$
${\bf z}\in  \visualindist({\bf x})$
\ ($\forall {\bf x}, {\bf z}\in \squ{X}$)
 \\\hline
 Cobounded
 &
 Asymptotically surjective \\ \hline
 Cobounded qi
 &
Invertible ac \\ \hline
 Quasi-inverse
 &
Asymptotic quasi-inverse\\
\hline 
Parallelism
&
Close at infinity \\ \hline
$\qigroup (X)$
(cf. \eqref{eq:qi-group})
&
$\ACGroup (X)$ (\S\ref{subsec:results})\\ \hline
\end{tabular}
\caption{Comparison with the coarse geometry for general metric spaces $X$.
For details, see \S\ref{subsec:definition} and
\S\ref{sec:comparison_with_coarse_geometry}.}
\label{table:1}
\end{table}

\begin{table}
\begin{tabular}{|p{10em}|p{20em}|}\hline
Coarse geometry
&
Geometry on the Gromov product \\ \hline\hline
Quasi-isometries
&
Asymptotically conservative mappings \\ \hline
Coarsely Lipschitz
&
$\gromov{{\bf x}}{{\bf z}}{}=\infty$
$\Rightarrow$
$\gromov{\omega({\bf x})}{\omega({\bf z})}{}=\infty$
\ ($\forall {\bf x}, {\bf z}\in \sq{X}$)
\\ \hline
Coarsely co-Lipschitz
&
$\gromov{\omega({\bf x})}{\omega({\bf z})}{}=\infty$
$\Rightarrow$
$\gromov{{\bf x}}{{\bf z}}{}=\infty$
\ ($\forall {\bf x}, {\bf z}\in \sq{X}$)
 \\\hline
\end{tabular}
\caption{Comparison with the coarse geometry for metric spaces which are WBGP.
For details, see \S\ref{sec:stable_at_infinity}.}
\label{table:2}
\end{table}

\subsection{Definitions}
\label{subsec:definition}

Let $X$ and $Y$ be metric spaces.
A mapping $\omega\in Y^{X}$ is said to be
\emph{asymptotically conservative with the Gromov product}
(\emph{asymptotically conservative} for short)
if
for any sequence ${\bf x}\in \squ{X}$,
the following two conditions hold;
\begin{enumerate}
\item
\label{item:definition-AC1}
$\omega(\visualindist({\bf x}))\subset
\visualindist(\omega({\bf x}))$.
\item
\label{item:definition-AC2}
For any ${\bf z}\in \squ{X}$,
if $\omega({\bf z})\in \visualindist(\omega({\bf x}))$,
then ${\bf z}\in \visualindist({\bf x})$.
\end{enumerate}
We will call a map $\omega\in Y^X$ with the condition \eqref{item:definition-AC1}
above \emph{weakly asymptotically conservative}
(cf. \S\ref{subsec:asymptoticallyconservativeincoarsegeometry}).

%

Here,
for a sequence ${\bf x}=\{x_{n}\}_{n\in \mathbb{N}}\in X^{\mathbb{N}}$,
$E\subset X^{\mathbb{N}}$
and a mapping $\omega\in Y^{X}$,
we define usually
$$
\omega({\bf x})=\{\omega(x_{n})\}_{n\in \mathbb{N}}\in Y^{\mathbb{N}},
\quad 
\omega(E)=\{\omega({\bf x})\mid {\bf x}\in E\}.
$$

Two mappings $\omega_1$,
$\omega_2\in Y^X$
are said to be \emph{close at infinity},
if for any ${\bf x}^1,{\bf x}^2\in \sq{X}$,
$\visualindist(\omega_1({\bf x}^1))=\visualindist(\omega_2({\bf x}^2))$
holds whenever $\visualindist({\bf x}^1)=\visualindist({\bf x}^2)$.
An asymptotically conservative mapping $\omega\in Y^X$ is said to be \emph{invertible}
if there is an asymptotically conservative mapping $\omega'\in X^Y$ such that
$\omega'\circ \omega$ and $\omega\circ\omega'$ are close to the identity
mappings on $X$ and $Y$,
respectively.
We call such $\omega'$ an \emph{asymptotic quasi-inverse} of $\omega$.
Let $\ACINV(X)$ be the set of invertible asymptotically conservative mappings
on $X$ to itself.
%
For instance,
any isometic isomorphism between metric spaces
is invertible asymptotically conservative.
The notions of mappings given above are stable under parallelism
(cf. Proposition \ref{prop:stability}).
In \S\ref{sec:comparison_with_coarse_geometry},
we will give more discussion.

\subsection{Results}
\label{subsec:results}
We first observe the following theorem
(cf. \S\ref{subsec:Monoild_semingroups}).

\begin{theoremAlphabet}[The group $\ACGroup (X)$]
\label{thm:arbitrary_quotient_space}
Let $X$ be a metric space.
The set $\ACINV(X)$ admits a monoid structure with respect to
the composition of mappings.
Furthermore,
the relation ``closeness at infinity" is a semigroup congruence on $\ACINV(X)$
and the quotient semigroup $\ACGroup (X)$ is a group.
\end{theoremAlphabet}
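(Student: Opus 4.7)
The plan is to establish the three assertions in sequence: that $\ACINV(X)$ is closed under composition and contains the identity; that closeness at infinity (which I will denote $\sim$) is an equivalence relation on $\ACINV(X)$ compatible with composition; and that each class in the resulting quotient has a two-sided inverse. The central technical device is a \emph{diagonal trick}: the defining implication of $\sim$ allows the two arguments ${\bf x}^{1}$ and ${\bf x}^{2}$ to be chosen independently, so in particular one may set them equal to recover a single-sequence statement, or one may feed the output of a previous application back in as a hypothesis. This is what lets the two-input definition interact with the usual equivalence-relation-and-congruence formalism.

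First I would prove that $\sim$ is an equivalence relation. Reflexivity is exactly the AC condition. Symmetry follows because swapping ${\bf x}^{1}$ and ${\bf x}^{2}$ swaps the roles of $\omega_{1}$ and $\omega_{2}$ in both hypothesis and conclusion. For transitivity, assume $\omega_{1}\sim\omega_{2}$ and $\omega_{2}\sim\omega_{3}$; given $\visualindist({\bf x}^{1})=\visualindist({\bf x}^{2})$, the first relation gives $\visualindist(\omega_{1}({\bf x}^{1}))=\visualindist(\omega_{2}({\bf x}^{2}))$, while the second, applied to the diagonal pair $({\bf x}^{2},{\bf x}^{2})$, gives $\visualindist(\omega_{2}({\bf x}^{2}))=\visualindist(\omega_{3}({\bf x}^{2}))$; concatenate the two equalities.

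Next I would verify compatibility with composition. Given $\omega_{1}\sim\tilde{\omega}_{1}$, $\omega_{2}\sim\tilde{\omega}_{2}$, and $\visualindist({\bf x}^{1})=\visualindist({\bf x}^{2})$, apply $\omega_{2}\sim\tilde{\omega}_{2}$ to obtain $\visualindist(\omega_{2}({\bf x}^{1}))=\visualindist(\tilde{\omega}_{2}({\bf x}^{2}))$, then feed this equality as the hypothesis into $\omega_{1}\sim\tilde{\omega}_{1}$ to conclude $\visualindist(\omega_{1}\circ\omega_{2}({\bf x}^{1}))=\visualindist(\tilde{\omega}_{1}\circ\tilde{\omega}_{2}({\bf x}^{2}))$. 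With compatibility in hand, closure of $\ACINV(X)$ under composition is immediate: the composition of two AC mappings is AC by \S\ref{sec:asymptotic_conservative}, and if $\omega_{i}$ has asymptotic quasi-inverse $\omega_{i}'$, then compatibility turns $\omega_{i}\circ\omega_{i}'\sim\mathrm{id}_X$ into $\omega_{1}\circ\omega_{2}\circ\omega_{2}'\circ\omega_{1}'\sim\omega_{1}\circ\omega_{1}'\sim\mathrm{id}_X$, and symmetrically on the other side, exhibiting $\omega_{2}'\circ\omega_{1}'$ as an asymptotic quasi-inverse of $\omega_{1}\circ\omega_{2}$. Since $\mathrm{id}_X$ is AC and is its own quasi-inverse, $\ACINV(X)$ is a monoid, and by compatibility the quotient $\ACGroup(X)$ is a semigroup with identity $[\mathrm{id}_X]$.

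The group property then falls out: for $[\omega]\in\ACGroup(X)$, the class $[\omega']$ of any asymptotic quasi-inverse of $\omega$ is a two-sided inverse because $\omega'\circ\omega\sim\mathrm{id}_X\sim\omega\circ\omega'$ by definition of invertibility, and uniqueness is the standard fact that inverses in a monoid are unique. The principal obstacle I anticipate is purely bookkeeping: the pair-of-sequences formulation of closeness at infinity must be unpacked carefully, and one must check at each step that the ``diagonal'' and ``chained'' applications of the definition are legitimate. A secondary point worth verifying explicitly is that an asymptotic quasi-inverse $\omega'$ of an invertible AC mapping is itself AC (so that $[\omega']$ really lies in $\ACGroup(X)$); this should follow from the closeness of $\omega'\circ\omega$ and $\omega\circ\omega'$ to the identity together with $\omega$ being AC, again via the diagonal trick. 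Once these definitional manipulations are fixed, every remaining step is a direct chase.
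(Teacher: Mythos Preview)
Your argument has a genuine gap at the very first step. You claim that reflexivity of $\sim$ on $\ACINV(X)$ is ``exactly the AC condition,'' but this conflates two distinct notions. The AC condition (Definition~\ref{def:asymptotically_conservative}) says that $\omega$ preserves \emph{visual indistinguishability}, i.e., ${\bf x}^{2}\in\visualindist({\bf x}^{1})$ if and only if $\omega({\bf x}^{2})\in\visualindist(\omega({\bf x}^{1}))$. Reflexivity of closeness requires that $\omega$ preserve \emph{asymptotic equivalence}, i.e., $\visualindist({\bf x}^{1})=\visualindist({\bf x}^{2})$ implies $\visualindist(\omega({\bf x}^{1}))=\visualindist(\omega({\bf x}^{2}))$. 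Because visual indistinguishability is not transitive in general (Remark~\ref{remark:accompanying_sequence}(3)), the first does not yield the second: given ${\bf y}\in\visualindist(\omega({\bf x}^{1}))$ that is not of the form $\omega({\bf z})$, the AC condition gives you no handle on whether ${\bf y}\in\visualindist(\omega({\bf x}^{2}))$.

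The paper closes this gap via asymptotic surjectivity. It first shows (Proposition~\ref{prop:invertible_means_surjectivity}) that every invertible AC map is asymptotically surjective, and then (Proposition~\ref{prop:quasi-surjective_asymptotic}) that an asymptotically surjective AC map preserves inclusions of $\visualindist$ sets: surjectivity lets one replace an arbitrary ${\bf y}\in\visualindist(\omega({\bf x}^{1}))$ by some $\omega({\bf z})$ with $\visualindist({\bf y})=\visualindist(\omega({\bf z}))$, after which AC applies. Your later uses of compatibility implicitly invoke reflexivity as well (e.g., composing with $\omega_{2}\circ\omega_{2}'\sim\mathrm{id}_{X}$ on the left by $\omega_{1}$ requires $\omega_{1}\sim\omega_{1}$), so the missing ingredient propagates through the argument. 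Once you insert the surjectivity step, the remainder of your outline matches the paper's proof. (Your ``secondary point'' is a non-issue: by the definition of invertibility, an asymptotic quasi-inverse is already required to lie in $\AC$.)
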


%

\paragraph{{\bf Large scale geometry of Teichm\"uller space}}
Our main interest is to clarify the large scale geometry of Teichm\"uller space
$\teich$
in respecting for asymptotic behaviors of sequences converging at infinity.

\subsubsection*{Rigidity theorem}
Let $S$ be a compact orientable surface.
We denote the \emph{complexity} of $S$
by
$$
\complexity{S}=3\,{\rm genus}(S)-3+\#\{\mbox{components of $\partial S$}\}.
$$
The Euler characteristic of $S$ is denoted by $\chi(S)$.
Throughout this paper,
we always assume that $\chi(S)<0$.
$S$ is said to be in the \emph{sporadic case} if $\complexity{S}\le 1$.

Let $\teich$ be the Teichm\"uller space of $S$
endowed with the Teichm\"uller distance.
The extended mapping class group $\MCG^*(S)$ of $S$ acts
isometrically on $\teich$ and we have a group homomorphism
$$
\mathcal{I}_0\colon \MCG^*(S)\to {\rm Isom}(\teich).
$$
We also have a monoid homomorphism
$$
\mathcal{I}\colon {\rm Isom}(\teich)\to \ACINV(\teich)
$$
defined by the inclusion
(see \S\ref{subsec:action_of_extended_mapping_class_groups}).
We will prove the following rigidity theorem
(cf. Theorem \ref{thm:induced_automorphism}).

\begin{theoremAlphabet}[Rigidity]
\label{thm:Teichmullerspace}
Suppose $\complexity{S}\ge 2$.
Let $\curvecomplex (S)$ be the complex of curves on $S$.
Then,
there is a monoid epimorphism
$$
\Xi\colon
\ACINV(\teich)\to {\rm Aut}(\curvecomplex (S))
$$
which descends to an isomorphism
$$
\ACGroup(\teich)\to {\rm Aut}(\curvecomplex (S))
$$
satisfying the following commutative diagram
$$
\xymatrix{
\MCG^*(S)
\ar[r]^{\mathcal{I}_0} &
{\rm Isom}(\teich)
\ar[r]^{\mathcal{I}} \ar[dr]^{\mbox{{\tiny group iso}}} & \ACINV(\teich) \ar[d]^{\mbox{{\tiny proj}}} \ar[dr]^{\Xi}& \\
& &\ACGroup(\teich)  \ar[r]^{\mbox{{\tiny group iso}}} &{\rm Aut}(\curvecomplex(S)), &
}
$$
where ${\rm Aut}(\curvecomplex (S))$ is the group of simplicial automorphisms of
$\curvecomplex (S)$.
\end{theoremAlphabet}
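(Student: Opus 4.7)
The plan is to build $\Xi$ by recording the action of an asymptotically conservative mapping on the visual indistinguishability classes of sequences that converge in the Gardiner--Masur compactification to projective classes of simple closed curves, and then combine this with Ivanov's theorem (together with its extensions by Korkmaz and Luo for the remaining surfaces) identifying $\text{Aut}(\curvecomplex(S))$ with $\MCG^*(S)$. Concretely, for each essential simple closed curve $\gamma$, the identity $i_{x_0}(x_1,x_2)=\exp(-2\gromov{x_1}{x_2}{x_0})$ together with the continuity of the intersection pairing on the Gardiner--Masur boundary guarantees that any sequence $\{x_n\}\subset\teich$ converging to $[\gamma]$ is convergent at infinity and that any two such sequences are visually indistinguishable; let $V(\gamma)$ denote the resulting $\visualindist$-class. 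For $\omega\in\ACINV(\teich)$, the image class $\visualindist(\omega(\{x_n\}))$ depends only on $V(\gamma)$, and the crucial claim is that it coincides with $V(\gamma')$ for a unique simple closed curve $\gamma'$; we then set $\Xi(\omega)(\gamma):=\gamma'$.

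Assuming that claim, the structural properties follow rather directly. If $\gamma_1,\gamma_2$ are disjoint then $i(\gamma_1,\gamma_2)=0$, so $V(\gamma_1)$ and $V(\gamma_2)$ are visually related; $\omega$ preserves this relation, hence $\Xi(\omega)(\gamma_1)$ and $\Xi(\omega)(\gamma_2)$ are disjoint or equal, and running the same argument with an asymptotic quasi-inverse $\omega'$ shows that $\Xi(\omega)$ is a simplicial automorphism of $\curvecomplex(S)$ with inverse $\Xi(\omega')$. The composition law $\Xi(\omega_1\circ\omega_2)=\Xi(\omega_1)\circ\Xi(\omega_2)$ is immediate, and if $\omega_1,\omega_2$ are close at infinity they induce the same map on $\visualindist$-classes, yielding the descent to a group homomorphism $\bar\Xi\colon\ACGroup(\teich)\to\text{Aut}(\curvecomplex(S))$. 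Surjectivity of $\Xi$ follows because $\mathcal{I}\circ\mathcal{I}_0$ embeds $\MCG^*(S)$ into $\ACINV(\teich)$ and the composition $\Xi\circ\mathcal{I}\circ\mathcal{I}_0$ realizes, by construction, the natural isomorphism $\MCG^*(S)\cong\text{Aut}(\curvecomplex(S))$. Injectivity of $\bar\Xi$ amounts to showing that if $\Xi(\omega)=\text{id}$ then $\omega$ is close at infinity to the identity, which I would extract from the density of projective classes of simple closed curves in the space of projective measured foliations, so that the $\visualindist$-relations with the classes $V(\gamma)$ already detect closeness. Commutativity of the diagram is then automatic.

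The main obstacle is the key claim in the construction---that $\omega$ carries $V(\gamma)$ to a $\visualindist$-class again of the form $V(\gamma')$. A priori $\omega$ respects only the abstract visual indistinguishability relation, so one must characterize simple closed curves among all $\visualindist$-classes arising from the Gardiner--Masur boundary in purely combinatorial terms. I expect this to hinge on a rigidity statement: the class $V(\gamma)$ is distinguished by the richness of the collection of $\visualindist$-classes visually related to it (one for each disjoint simple closed curve, reflecting the topology of the curve complex), whereas generic boundary points, such as projective classes of uniquely ergodic filling foliations, have far sparser visual neighborhoods. Making this precise should require both the analytic identity \eqref{eq:insufficiency_maximal_dilatation} and the boundary-continuous structure of the extremal length geometry developed in the earlier sections of the paper.
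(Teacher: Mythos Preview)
Your overall strategy matches the paper's: define $\Xi$ by tracking how $\omega$ moves the visual-indistinguishability classes attached to projective simple closed curves, check that disjointness is preserved, and use an asymptotic quasi-inverse to get bijectivity. However, the two places you flag as needing work are precisely where the paper invests most of its effort, and your proposed resolutions are not sharp enough as stated.

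\textbf{Characterizing $V(\gamma)$ among all $\visualindist$-classes.} Your heuristic that $V(\gamma)$ is distinguished by the ``richness'' of its visual neighborhood is the wrong invariant: a uniquely ergodic filling foliation has a sparse visual neighborhood, but a multicurve $\alpha_1+\cdots+\alpha_k$ also has a very rich one, so cardinality of visual neighbors does not separate simple closed curves from multicurves or more general boundary points. The paper instead equips the reduced Gardiner--Masur boundary with the partial order $\mathcal{N}^\Psi(p_1)\supsetneqq\mathcal{N}^\Psi(p_2)\supsetneqq\cdots$ (\emph{adherence towers}) and defines the \emph{height} of $\equivalence{p}$ as the maximal length of such a chain starting at $\equivalence{p}$. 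The key lemma (Lemma~\ref{lem:height_boundary_points}) is that the height is always $\le\complexity(S)$, with equality exactly when the associated foliation is a single simple closed curve; this uses the complexity function $\xi_0(G)$ of \eqref{eq:xi_0} and the structure theory of null sets developed in \S\ref{sec:Null_sets_section}. Since $\omega\in\ACINV(\teich)$ preserves inclusions of null sets (Lemma~\ref{lem:equivalent_1}), it preserves adherence towers and hence heights, which is what forces $\extension{\omega}(\equivalence{\alpha})=\equivalence{\beta}$ for some $\beta\in\mathcal{S}$.

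\textbf{Injectivity of $\bar\Xi$.} Density of simple closed curves in $\mathcal{PMF}$ is not by itself enough: an asymptotically conservative $\omega$ has no a priori continuity on $\partial_{GM}\teich$, so knowing $\omega$ fixes each $V(\gamma)$ does not immediately propagate to arbitrary $\visualindist$-classes. The paper's route (Propositions~\ref{prop:rigidity_2} and~\ref{prop:rigidity_3}) is to realize $h_\omega=\Xi(\omega)$ by a homeomorphism $f_\omega$ of $S$, and then show, using the null-set structure theorems (Theorem~\ref{thm:null_set}, Corollary~\ref{coro:topological_equivalence_null_set}) together with a diagonal-limit argument approximating arational components by curves, that for \emph{every} $p\in\partial_{GM}\teich$ and $q\in\mathcal{A}(\omega:p)$ one has $\mathcal{N}^\Psi(q)=\mathcal{N}^\Psi([f_\omega(G)])$. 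This yields an isometry $\xi_\omega$ close to $\omega$ at infinity; when $\Xi(\omega)=\mathrm{id}$ one gets $\xi_\omega=\mathrm{id}$, hence $\omega$ is close to the identity. You should also note that $\MCG^*(S)\to\mathrm{Aut}(\curvecomplex(S))$ is not always an isomorphism (closed genus~$2$, torus with two holes), so the paper routes surjectivity through $\mathcal{J}_1\colon\mathrm{Isom}(\teich)\to\mathrm{Aut}(\curvecomplex(S))$ and treats the torus-with-two-holes case separately via the hyperelliptic quotient.
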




\subsubsection*{Relation to the coarse geometry}
Recently,
A. Eskin, H.Masur and K.Rafi observed a remarkable result that
any cobounded quasi-isometry of $\teich$ is parallel to an isometry,
and the inclusion
${\rm Isom}(\teich)\hookrightarrow {\rm QI}(\teich)$
induces an isomorphism
$$
{\rm Isom}(\teich)\cong \qigroup(\teich)=\qi(\teich)/(\mbox{parallelism})
$$
when $S$ is in the non-sporadic case.
Especially,
any self quasi-isometry on $\teich$ is weakly asymptotically conservative
(cf. 
Proposition \ref{prop:stability}).
Hence,
we have the following
sequence of monoids and monoid homomorphisms
\begin{equation}
\label{eq:sequence1}
{\rm Isom}(\teich)
\hookrightarrow \qi(\teich)
\hookrightarrow \ACINV(\teich)
\end{equation}
by inclusions
(see Corollary \ref{coro:criterion-quasi-isometries}).
Theorem \ref{thm:Teichmullerspace}
implies the following.

\begin{corollary}[Relation to the coarse geometry on $\teich$]
For non-sporadic cases,
a quotient set $\qi(\teich)/(\mbox{close at infinity})$ admits a group structure
equipped with the operation defined by composition,
and
the sequence \eqref{eq:sequence1} descends to the following 
sequence of isomorphisms
$$
{\rm Isom}(\teich)
\cong 
\qigroup(\teich)
\cong \qi(\teich)/(\mbox{close at infinity})
\cong \ACGroup(\teich).
$$
\end{corollary}

\begin{corollary}[Criterion for parallelism]
\label{coro:parallelism}
Let $\omega,\psi\colon \teich\to \teich$ be cobounded quasi-isometries.
The following are equivalent.
\begin{enumerate}
\item
\label{item:coro:parallelism2}
For any ${\bf x},{\bf y}\in \sq{\teich}$
with $\gromov{{\bf x}}{{\bf y}}{}=\infty$,
it holds $\gromov{\omega({\bf x})}{\psi({\bf y})}{}=\infty$.
\item
\label{item:coro:parallelism3}
$\psi$ is parallel to $\omega$.
\end{enumerate}
\end{corollary}
Corollary \ref{coro:parallelism}
gives an analogy between the hyperbolic space and Teichm\"uller space.
Indeed,
the conclusion holds when we consider the hyperbolic space
$\mathbb{H}^n$ ($n \ge 2$) instead of the Teichm\"uller space $\teich$.
%
%

\subsubsection*{No-rough homothety}
By applying the discussion in the proof of Theorem \ref{thm:Teichmullerspace},
we also obtain a hyperbolic characteristic of Teichm\"uller space.
In fact,
we will give a proof of the following folklore result
in \S\ref{subsec:rough_homothety}.

\begin{theoremAlphabet}[No rough-homothety with $K\ne 1$]
\label{thm:Teichmullerspace_homothety}
There is no $(K,D)$-rough homothety with
asymptotic quasi-inverse on the Teichm\"uller space of $S$
unless $K=1$.
\end{theoremAlphabet}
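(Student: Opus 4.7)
The plan is to combine two observations. First, a $(K,D)$-rough homothety $f$ automatically scales the Gromov product on $\teich$ by the factor $K$ up to additive error. Second, Theorem \ref{thm:Teichmullerspace} supplies a mapping class $\phi \in \MCG^*(S)$ whose isometric realisation is close at infinity to $f$. Interpreting the Gromov product at infinity as the weighted intersection number on $\mathcal{PMF}$ via \eqref{eq:exponential_Gromov_product} then yields a transformation rule incompatible with $K \neq 1$.

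I would first substitute $|d_T(f(x), f(y)) - K d_T(x,y)| \le D$ directly into \eqref{eq:Gromov_product} to obtain
\[
\gromov{f(x_1)}{f(x_2)}{f(x_0)} = K \gromov{x_1}{x_2}{x_0} + O(D).
\]
Since $K > 0$ this preserves visual indistinguishability in both directions, and together with the asymptotic quasi-inverse it places $f$ in $\ACINV(\teich)$. For $S$ of complexity greater than the two exceptional low-complexity cases, Theorem \ref{thm:Teichmullerspace} produces $\Xi(f) \in {\rm Aut}(\curvecomplex(S))$, which by Ivanov--Korkmaz--Luo is realised by an element $\phi \in \MCG^*(S)$; the commutative diagram then delivers an isometry $g \in {\rm Isom}(\teich)$ close to $f$ at infinity. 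The two exceptional cases are Gromov hyperbolic, so Theorem \ref{thm:Gromov_hyperbolic} supplies the analogous boundary control.

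Exponentiating the scaling relation and passing to the Gardiner--Masur boundary using the continuous extension of $i_{x_0}$ (cf.\ \eqref{eq:exponential_Gromov_product} and \S\ref{subsubsec:analytic_aspect}), I deduce that for every $[F_1], [F_2] \in \mathcal{PMF}$,
\[
i_{f(x_0)}(\phi [F_1], \phi [F_2]) \asymp i_{x_0}([F_1], [F_2])^K,
\]
with multiplicative error bounded in terms of $D$. Normalising representatives so that $\ext_{x_0}(F_i) = 1$, and invoking the $\MCG$-invariance of the topological intersection number together with Kerckhoff's comparison of extremal lengths at $x_0$ and at $y := \phi^{-1}(f(x_0))$, the left-hand side reduces to $i(F_1, F_2)$ up to a bounded multiplicative factor. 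Hence
\[
i(F_1, F_2)^{1-K} = O(1)
\]
uniformly over normalised projective measured foliations. Since the weighted intersection number attains values arbitrarily close to $0$ among normalised pairs (e.g.\ by taking suitably scaled, mildly intersecting simple closed curves), this forces $1 - K = 0$.

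The main obstacle is promoting the scaling relation from the interior of $\teich$ to a pointwise relation between intersection numbers on $\mathcal{PMF}$. Concretely, one must verify that when $x_n \to [F]$ in the Gardiner--Masur compactification, the image sequence $\{f(x_n)\}$ converges to exactly $\phi[F]$, not merely to a sequence visually indistinguishable from one doing so, and that the error arising from the base-point mismatch $f(x_0) \neq \phi(x_0)$ is absorbed into Kerckhoff's bound. I expect both to follow from the continuity of the extended intersection number together with the concrete identification of $\Xi(f)$ provided by Theorem \ref{thm:Teichmullerspace}; once these are in place, the contradiction argument above is elementary.
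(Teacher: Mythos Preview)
Your approach to the high-complexity case is essentially the paper's: scale the Gromov product by $K$, invoke the rigidity theorem to produce a mapping class $\phi$ (the paper's $f_\omega$), compare the $K$-th-power scaling of $i_{x_0}$ against the linear scaling coming from the isometric action of $\phi$, and obtain a contradiction by sending the normalised intersection number to $0$. Two remarks:

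\textbf{(i)} The paper works only with simple closed curves $\alpha,\beta\in\mathcal{S}$, where Lemma~\ref{lem:rigidity_1}(1) gives the \emph{exact} accumulation set $\mathcal{A}(\omega:[\alpha])=\{[h_\omega(\alpha)]\}$. This sidesteps precisely the obstacle you flag: one never needs to know that $f(x_n)\to\phi[F]$ for general $[F]\in\mathcal{PMF}$, only for $[\alpha]$ with $\alpha\in\mathcal{S}$, and then one lets $[\alpha],[\beta]\to[G]$ at the very end. Your route through arbitrary $F_1,F_2$ is workable via Proposition~\ref{prop:rigidity_2} but is more than is needed.

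\textbf{(ii)} The single inequality $i(F_1,F_2)^{1-K}=O(1)$ only rules out $K>1$ (since $i_{x_0}\le 1$ by Minsky's inequality). You also need $i(F_1,F_2)^{K-1}=O(1)$, which comes from the other half of your two-sided $\asymp$; the paper states both explicitly as \eqref{eq:K-homothety_4} and \eqref{eq:K-homothety_4_2}.

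For $\complexity(S)=1$ your treatment is a genuine gap. Theorem~\ref{thm:Gromov_hyperbolic} only tells you the boundary extension exists and is a bijection; it does not supply an intersection-number relation, and there is no $\Xi$ or curve complex to invoke. The paper instead uses the isometry $\teich\cong\mathbb{D}$, observes that a $(K,D)$-rough homothety extends to a circle homeomorphism satisfying $|\omega(p_1)-\omega(p_2)|\asymp|p_1-p_2|^K$ (via the comparison of the Gromov product with distance to a geodesic), and notes that for $K\ne 1$ this is incompatible with a.e.\ differentiability of monotone maps of $\partial\mathbb{D}$.
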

Here, a mapping $\omega\colon (X,d_X)\to (Y,d_Y)$
between metric spaces
is said to be a $(K,D)$-\emph{rough homothety} if
\begin{equation}
\label{eq:K_D_homothety}
|d_Y(\omega(x_1),\omega(x_2))-
Kd_X(x_1,x_2)|\le D
\end{equation}
for $x_1,x_2\in X$
(cf. Chapter 7 of \cite{Buyalo_Schroeder}).
Any rough-homothety is asymptotically conservative.
Theorem \ref{thm:Teichmullerspace_homothety}
implies that there is no non-trivial similarity in Teichm\"uller space,
like in hyperbolic spaces.
Since
rough homotheties are quasi-isometries,
if $\omega$ in Theorem \ref{thm:Teichmullerspace_homothety} is cobounded,
the rigidity in the theorem follows from Eskin-Masur-Rafi's quasi-isometry rigidity theorem.
However,
the author does not know whether rough homotheties in the statement are cobounded or not.

Though enormous hyperbolic characteristics have been observed in Teichm\"uller space,
we would like to notice a remarkable fact
proven by Athreya, Bufetov, Eskin and Mirzakhani in \cite{ABEM}.
Indeed,
they observed that the volume of the metric balls in Teichm\"uller space has exponential growth.
Thus,
their result might imply that
a measurable $(K,D)$-homothety on Teichm\"uller space
does not exist unless $K\ne 1$.

\subsection{Plan of this paper}
This paper is organized as follows:
In \S\ref{sec:asymptotic_conservative},
we will introduce asymptotically conservative mappings on metric spaces.
We first start with the basics for the Gromov product, and we next
develop the properties of asymptotically conservative mappings.
We will prove Theorem \ref{thm:arbitrary_quotient_space} in \S\ref{subsec:Monoild_semingroups}.
In \S\ref{sec:comparison_with_coarse_geometry} and \S\ref{sec:stable_at_infinity},
we will discuss a relation between our geometry and the coarse geometry.
%

From \S\ref{sec:teichmuller_theory} to \S\ref{sec:Null_sets_section},
we devote to prepare for the proofs of
Theorems \ref{thm:Teichmullerspace} and \ref{thm:Teichmullerspace_homothety}.
In \S\ref{sec:teichmuller_theory},
we give basic notions of Teichm\"uller theory
including the definitions of Teichm\"uller space,
measured foliations and extremal length.
In \S\ref{sec:Thurston_theory_extremal_length},
we recall our unification theorem for extremal length geometry on Teichm\"uller space
via intersection number.
One of the key for proving our rigidity theorem is
to characterize the null sets for points in the GM-cone
 (cf. Theorem \ref{thm:null_set})
The characterization is also applied to proving a rigidity theorem of holomorphic disks
in the Teichm\"uller space (cf. \cite{Mi6}).
In \S\ref{sec:action_Reduced_boundary},
We define the reduced Gardner-Masur compactification
and study the action of asymptotically conservative mappings on the reduced Gardiner-Masur compactification.
In \S\ref{sec:Rigidity_asymptotic_teichmuller},
we will prove Theorems \ref{thm:Teichmullerspace} and \ref{thm:Teichmullerspace_homothety}.
%

\subsection{Acknowledgements}
The author would like to express his hearty gratitude
to Professor Ken'ichi Ohshika for fruitful discussions
and his constant encouragements.
He also thanks Professor Yair Minsky for informing him about
a work by Athreya, Bufetov, Eskin and Mirzakhani in \cite{ABEM}.
The author also thanks Professor Athanase Papadopoulos for his kindness and useful comments.


\section{Asymptotically conservative with the Gromov product}
\label{sec:asymptotic_conservative}

\subsection{Basics of the Gromov product}
\label{subsec:Gromov_product}
Let $(X,d_X)$ be a metric space.
%
The following is known
for $x_1$,
$x_2$,
$x_3$,
$z_1$,
$w_1\in X$:
\begin{align}
\gromov{x_1}{x_2}{z_1} &\ge 0 \\
\gromov{x_1}{x_2}{z_1} &\le \min\{d_X(z_1,x_1),d_X(z_1,x_2)\}
\label{eq:Gromov_product_1} \\
\gromov{x_1}{x_1}{z_1}&=d_X(z_1,x_1)
\label{eq:Gromov_product_2} \\
|\gromov{x_1}{x_2}{z_1}-\gromov{x_1}{x_2}{w_1}|
&\le d_X(z_1,w_1)
\label{eq:Gromov_product_3} \\
|\gromov{x_1}{x_2}{z_1}-\gromov{x_1}{x_3}{z_1}|
&\le d_X(x_2,x_3).
\label{eq:Gromov_product_4}
\end{align}

\subsection{Sequences converging at infinity}
We notice the following.

\begin{remark}[Basic properties]
\label{remark:accompanying_sequence}
Let $X$ be a metric space.
The following hold:
\begin{enumerate}
\item
\label{item:BasicProperties1}
The relation ``visually indistinguishable"
is reflexive
on $\sq{X}$:
${\bf x}\in \sq{X}$
if and only if
${\bf x}\in \visualindist({\bf x})$.
\item
\label{item:BasicProperties2}
The relation ``visually indistinguishable"
is symmetric on $\sq{X}$:
If ${\bf z}\in \visualindist({\bf x})$,
then ${\bf x}\in \visualindist({\bf z})$.
\item
\label{item:BasicProperties3}
The relation ``visually indistinguishable"
is not transitive
in general.
Namely,
it is possible that
$\visualindist({\bf z})\ne  \visualindist({\bf x})$
for some unbounded sequences ${\bf x}, {\bf z}$
with $\visualindist({\bf x})\cap \visualindist({\bf z})\ne \emptyset$.
\item
\label{item:BasicProperties4}
For ${\bf x}\in X^{\mathbb{N}}$,
any subsequence ${\bf z}'$ of ${\bf z}\in \visualindist({\bf x})$
is also in $\visualindist({\bf x})$.
\item
\label{item:BasicProperties5}
Any subsequence ${\bf x}'$ of ${\bf x}\in X^{\mathbb{N}}$
satisfies 
$\visualindist({\bf x})\subset \visualindist({\bf x}')$.
\end{enumerate}
\end{remark}
Indeed,
\eqref{item:BasicProperties1} and
\eqref{item:BasicProperties2},
follow from the definitions.
Notice that for any subsequences ${\bf x}'\subset {\bf x}$ and ${\bf y}'\subset {\bf y}$,
it holds
\begin{equation}
\label{eq:subsequence-larger}
\gromov{{\bf x}}{{\bf y}}{}\le \gromov{{\bf x}'}{{\bf y}'}{}.
\end{equation}
In particular,
any subsequence of a sequnce converging at infinity also converges at infinity.
\eqref{item:BasicProperties4}
and \eqref{item:BasicProperties5}
follow from \eqref{eq:subsequence-larger}.
For \eqref{item:BasicProperties3},
we will see that on Teichm\"uller space $\teich$ equipped with 
the Teichm\"uller distance,
the relation ``visually indistinguishable" does not define
an equivalence relation on $\sq{\teich}$,
when the base surface is neither a torus with one hole nor
a sphere with four holes
(cf. \S\ref{sec:Remark_visually_indistinguishable}).

\subsection{Asymptotically conservative}
\label{subsec:asymptotically_conservative}
For metric spaces $X$ and $Y$,
we define
%
%
$$
\AC(X,Y)=\{\omega\in Y^{X}\mid \mbox{$\omega$ is
asymptotically conservative}\}
$$
(for the definition,
see \S\ref{subsec:definition}).
Set $\AC(X)=\AC(X,X)$.

\begin{proposition}
\label{prop:x-converges-infinity-omega-x-converges-infinity}
Let $\omega\in \AC(X,Y)$.
For a sequence
${\bf x}\in \squ{X}$,
${\bf x}\in \sq{X}$
if and only if $\omega({\bf x})\in \sq{Y}$.
\end{proposition}

\begin{proof}
Let
${\bf x}\in \squ{X}$.
Suppose first that ${\bf x}\in \sq{X}$.
From (\ref{item:BasicProperties1}) of Remark \ref{remark:accompanying_sequence},
${\bf x}\in \visualindist({\bf x})$.
Since $\omega$ is asymptotically conservative,
$\omega({\bf x})\in \omega(\visualindist({\bf x}))\subset
\visualindist(\omega({\bf x}))$
and $\omega({\bf x})\in \sq{Y}$.

Conversely,
assume that
$\omega({\bf x})\in \sq{Y}$.
Since $\omega({\bf x})\in \visualindist(\omega({\bf x}))$,
from the definition of asymptotically conservative mappings,
we have ${\bf x}\in \visualindist({\bf x})$
and 
hence ${\bf x}\in \sq{X}$.
\end{proof}

%

\begin{proposition}[Composition in $\AC$]
\label{prop:properties_quasimorphisms}
Let $X$,
$Y$
and $Z$ be metric spaces.
For $\omega_1\in \AC(Y,Z)$
and $\omega_2\in\AC(X,Y)$,
$\omega_1\circ \omega_2\in \AC(X,Z)$.
\end{proposition}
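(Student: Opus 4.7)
The plan is to verify the two defining conditions of Definition~\ref{def:asymptotically_conservative} for $\omega_1 \circ \omega_2$ by chaining them through the intermediate space $Y$. Since both conditions are stated for sequences that converge at infinity, I first want to make sure that this convergence is preserved along the composition: if ${\bf x}$ converges at infinity in $X$, then by Remark~\ref{remark:1} applied to $\omega_2$, $\omega_2({\bf x})$ converges at infinity in $Y$, and applying Remark~\ref{remark:1} again to $\omega_1$ gives that $\omega_1(\omega_2({\bf x}))$ converges at infinity in $Z$. So the setting of the definition is preserved by the composition, and I may freely talk about $\visualindist(\,\cdot\,)$ of the relevant sequences.

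For condition (1), I would start with sequences ${\bf x}^1, {\bf x}^2$ in $X$ with ${\bf x}^2 \in \visualindist({\bf x}^1)$. Applying condition (1) of $\omega_2 \in \AC(X,Y)$ gives $\omega_2({\bf x}^2) \in \visualindist(\omega_2({\bf x}^1))$, and then applying condition (1) of $\omega_1 \in \AC(Y,Z)$ to the sequences $\omega_2({\bf x}^1), \omega_2({\bf x}^2)$ in $Y$ yields $\omega_1(\omega_2({\bf x}^2)) \in \visualindist(\omega_1(\omega_2({\bf x}^1)))$, which is the desired inclusion $(\omega_1\circ \omega_2)(\visualindist({\bf x}^1)) \subset \visualindist((\omega_1\circ \omega_2)({\bf x}^1))$.

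For condition (2), I would reverse the chain. Suppose ${\bf x}, {\bf z}$ are sequences in $X$ with $(\omega_1 \circ \omega_2)({\bf z}) \in \visualindist((\omega_1 \circ \omega_2)({\bf x}))$. Regarding this as a statement in $Z$ about the images under $\omega_1$ of $\omega_2({\bf x})$ and $\omega_2({\bf z})$, condition (2) applied to $\omega_1$ gives $\omega_2({\bf z}) \in \visualindist(\omega_2({\bf x}))$. Then condition (2) applied to $\omega_2$ yields ${\bf z} \in \visualindist({\bf x})$, as required.

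There is no substantial obstacle here: the argument is essentially formal, a two-step chase through the definition. The only point that needs care is to recognize that Remark~\ref{remark:1} justifies the implicit assumption that all sequences in sight converge at infinity, so that the relation $\visualindist(\,\cdot\,)$ and the two conditions of Definition~\ref{def:asymptotically_conservative} are meaningful at each stage of the chain.
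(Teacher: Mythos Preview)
Your proof is correct and follows essentially the same approach as the paper's: both verify condition~(1) by chaining the inclusions $\omega_1\circ\omega_2(\visualindist({\bf x}))\subset\omega_1(\visualindist(\omega_2({\bf x})))\subset\visualindist(\omega_1\circ\omega_2({\bf x}))$, and condition~(2) by applying the reverse implication first to $\omega_1$ and then to $\omega_2$. Your explicit invocation of Remark~\ref{remark:1} to ensure convergence at infinity is a nice point of care that the paper leaves implicit.
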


\begin{proof}
Let ${\bf x}\in \squ{X}$.
Then,
$$
\omega_1\circ \omega_2(\visualindist({\bf x}))
\subset 
\omega_1(\visualindist(\omega_2({\bf x})))
\subset
\visualindist(\omega_1\circ \omega_2({\bf x})).
$$
Let ${\bf z}\in \squ{X}$ with $\omega_1\circ \omega_2({\bf z})\in \visualindist(\omega_1\circ \omega_2({\bf x}))$.
Since $\omega_1$ is asymptotically conservative,
$\omega_2({\bf z})\in \visualindist(\omega_2({\bf x}))$.
Since $\omega_2$ is also asymptotically conservative again,
we have 
${\bf z}\in \visualindist({\bf x})$.
 \end{proof}

\subsection{Remark on closeness}
Recall that two mappings $\omega_1,\omega_2\in Y^{X}$
are close at infinity
if for any ${\bf x}^{1}$,
${\bf x}^{2}\in \sq{X}$,
it holds $\visualindist (\omega_1({\bf x}^{1}))=\visualindist (\omega_2({\bf x}^{2}))$
whenever $\visualindist ({\bf x}^{1})=\visualindist ({\bf x}^{2})$.
In particular,
such $\omega_{1}$ and $\omega_{2}$ satisfy
\begin{equation}
\label{eq:accompany_close_at_infinity}
\visualindist (\omega_1({\bf x}))=\visualindist (\omega_2({\bf x}))
\end{equation}
for all ${\bf x}\in \sq{X}$.

\begin{proposition}[Composition and closeness]
\label{prop:properties_quasimorphisms_close}
Let $X$,
$Y$
and $Z$ be metric spaces.
Let $\omega_1, \omega'_1\in Z^{Y}$
and
$\omega_2, \omega'_2\in Y^{X}$.
If $\omega_i$ and $\omega'_i$ are close at infinity for $i=1,2$,
$\omega_1\circ \omega_2$
is close to $\omega'_1\circ \omega'_2$ at infinity.
\end{proposition}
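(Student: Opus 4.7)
The plan is to chain together the two closeness hypotheses, applying Definition \ref{def:closeness_at_infinity} once in the middle layer and once in the outer layer. Start with arbitrary sequences ${\bf x}^1$ and ${\bf x}^2$ in $X$ that are asymptotic, that is, $\visualindist({\bf x}^1)=\visualindist({\bf x}^2)$. Since $\omega_2$ and $\omega'_2$ are close at infinity as mappings from $X$ to $Y$, applying Definition \ref{def:closeness_at_infinity} to the pair $(\omega_2,\omega'_2)$ immediately yields
$$
\visualindist(\omega_2({\bf x}^1))=\visualindist(\omega'_2({\bf x}^2)),
$$
so $\omega_2({\bf x}^1)$ and $\omega'_2({\bf x}^2)$ are themselves asymptotic sequences in $Y$.

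Next, apply Definition \ref{def:closeness_at_infinity} once more, this time to the pair $(\omega_1,\omega'_1)$, with the two sequences $\omega_2({\bf x}^1)$ and $\omega'_2({\bf x}^2)$ in $Y$ playing the role of the asymptotic pair. The hypothesis that $\omega_1$ and $\omega'_1$ are close at infinity then delivers
$$
\visualindist\bigl(\omega_1(\omega_2({\bf x}^1))\bigr)=\visualindist\bigl(\omega'_1(\omega'_2({\bf x}^2))\bigr),
$$
which is exactly the assertion that $\omega_1\circ\omega_2$ and $\omega'_1\circ\omega'_2$ are close at infinity.

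The only subtlety to be dispatched is that the second step implicitly requires $\omega_2({\bf x}^1)$ and $\omega'_2({\bf x}^2)$ to be convergent sequences at infinity in $Y$, so that the notation $\visualindist(\cdot)$ and the definition of closeness apply to them. Under the ambient Convention of \S\ref{subsec:Gromov_product} this is built in; alternatively, specialising the first step to ${\bf x}^1={\bf x}^2={\bf x}$ shows that $\omega_2({\bf x})$ and $\omega'_2({\bf x})$ lie in a common $\visualindist$-set, which forces convergence at infinity (and is also guaranteed by Remark \ref{remark:1} when the mappings are asymptotically conservative). Beyond this bookkeeping, there is no genuine obstacle: the statement is essentially a two-step unwinding of the definition, entirely parallel to the composition argument used in Proposition \ref{prop:properties_quasimorphisms}.
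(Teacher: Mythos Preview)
Your proof is correct and follows exactly the same two-step unwinding of Definition \ref{def:closeness_at_infinity} as the paper's own proof; the paper simply omits your final paragraph about convergence at infinity, relying tacitly on the Convention of \S\ref{subsec:Gromov_product}.
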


\begin{proof}
Let
${\bf x}^1,{\bf x}^2\in \sq{X}$
with $\visualindist ({\bf x}^1)=\visualindist ({\bf x}^2)$.
By definition,
$\visualindist(\omega_2({\bf x}^1))=\visualindist(\omega'_2({\bf x}^2))$,
and hence
$\visualindist(\omega_1\circ \omega_2({\bf x}^1))
=\visualindist(\omega'_1\circ \omega'_2({\bf x}^2))$.
\end{proof}

\subsection{Asymptotic surjectivity and Closeness at infinity}
A mapping $\omega\in Y^{X}$ is said to be \emph{asymptotically surjective}
if for any ${\bf y}\in\sq{Y}$,
there is ${\bf x}\in \sq{X}$ with $\visualindist({\bf y})
=\visualindist(\omega({\bf x}))$.
Let
$$
\ACAS(X,Y)
=\{\omega\in \AC(X,Y)\mid
\mbox{$\omega$ is asymptotically surjective}\}.
$$



\begin{proposition}
\label{prop:quasi-surjective_asymptotic}
Let $\omega\in \ACAS(X,Y)$.
For ${\bf x}^1, {\bf x}^2\in\sq{X}$,
if $\visualindist({\bf x}^2)\subset \visualindist({\bf x}^1)$,
then $\visualindist(\omega({\bf x}^2))\subset
\visualindist(\omega({\bf x}^1))$.
In particular,
if ${\bf x}^1$ and ${\bf x}^2$ are asymptotic,
so are $\omega({\bf x}^2)$ and $\omega({\bf x}^1)$.
\end{proposition}

\begin{proof}
Let ${\bf y}\in \visualindist (\omega({\bf x}^2))$.
Since $\omega$ is asymptotically surjective,
there is ${\bf z}\in \sq{X}$ such that
$\visualindist({\bf y})=\visualindist(\omega({\bf z}))$.
Since $\omega$ is asymptotically conservative
and
$\omega({\bf x}^2)\in \visualindist({\bf y})=\visualindist(\omega({\bf z}))$,
we have ${\bf x}^2\in \visualindist({\bf z})$
and
$$
{\bf z}\in\visualindist({\bf x}^2)
\subset \visualindist({\bf x}^1).
$$
Therefore,
${\bf x}^1\in \visualindist({\bf z})$
(cf. (\ref{item:BasicProperties2}) of Remark \ref{remark:accompanying_sequence}).
Hence we deduce
$$
\omega({\bf x}^1)\in \omega(\visualindist({\bf z}))
\subset
\visualindist(\omega({\bf z}))
= \visualindist({\bf y}),
$$
and ${\bf y}\in \visualindist(\omega({\bf x}^1))$.
\end{proof}

\begin{proposition}[Composition of mappings in $\ACAS$]
\label{prop:composition_as}
For $\omega_1\in \ACAS(Y,Z)$ and $\omega_2\in \ACAS(X,Y)$,
we have $\omega_1\circ \omega_2\in \ACAS(X,Z)$.
\end{proposition}

\begin{proof}
Let ${\bf z}\in \sq{Z}$.
By definition,
there are ${\bf y}\in \sq{Y}$ and ${\bf x}\in\sq{X}$
such that
$\visualindist({\bf z})=\visualindist(\omega_{1}({\bf y}))$
and 
$\visualindist({\bf y})=\visualindist(\omega_{2}({\bf x}))$.
Since $\omega_{1}$
is asymptotically surjective again,
from Proposition \ref{prop:quasi-surjective_asymptotic},
we conclude
$$
\visualindist({\bf z})=\visualindist(\omega_{1}({\bf y}))
=
\visualindist(\omega_{1}(\omega_{2}({\bf x})))
=\visualindist(\omega_{1}\circ \omega_{2}({\bf x}))
$$
and hence $\omega_{1}\circ \omega_{2}$ is asymptotically surjective.
\end{proof}

\begin{proposition}[Closeness is an equivalence relation on $\ACAS$]
\label{prop:GP_close_equivalence_relation}
Let $X$ and $Y$ be metric spaces.
The relation ``closeness at infinity" is an equivalence relation
on $\ACAS(X,Y)$.
\end{proposition}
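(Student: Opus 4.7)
The plan is to verify each of the three axioms of an equivalence relation separately, with reflexivity being the only part that genuinely uses the asymptotic surjectivity hypothesis (via Proposition \ref{prop:quasi-surjective_asymptotic}), while symmetry and transitivity follow by purely formal manipulations of the definition.

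For \emph{reflexivity}, given $\omega \in \ACAS(X,Y)$ and sequences ${\bf x}^1,{\bf x}^2$ with $\visualindist({\bf x}^1) = \visualindist({\bf x}^2)$, I would invoke Proposition \ref{prop:quasi-surjective_asymptotic} twice: once with the inclusion $\visualindist({\bf x}^2) \subset \visualindist({\bf x}^1)$ to get $\visualindist(\omega({\bf x}^2)) \subset \visualindist(\omega({\bf x}^1))$, and once with the reverse inclusion to obtain the opposite containment. Combining gives $\visualindist(\omega({\bf x}^1)) = \visualindist(\omega({\bf x}^2))$, so $\omega$ is close to itself at infinity. This is the only spot where asymptotic surjectivity of $\omega$ enters; on the bare set $\AC(X,Y)$ reflexivity need not hold.

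For \emph{symmetry}, suppose $\omega_1$ is close to $\omega_2$ at infinity. Given sequences with $\visualindist({\bf x}^1) = \visualindist({\bf x}^2)$, I simply swap the roles of ${\bf x}^1$ and ${\bf x}^2$ in Definition \ref{def:closeness_at_infinity}: the hypothesis $\visualindist({\bf x}^2) = \visualindist({\bf x}^1)$ still holds, and the conclusion gives $\visualindist(\omega_1({\bf x}^2)) = \visualindist(\omega_2({\bf x}^1))$, which is the closeness of $\omega_2$ to $\omega_1$ evaluated on the original pair.

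For \emph{transitivity}, assume $\omega_1$ is close to $\omega_2$ and $\omega_2$ is close to $\omega_3$ at infinity. Given ${\bf x}^1,{\bf x}^2$ with $\visualindist({\bf x}^1) = \visualindist({\bf x}^2)$, I would chain the two hypotheses through the bridge sequence ${\bf x}^1$: applying closeness of $\omega_1,\omega_2$ to the trivially asymptotic pair $({\bf x}^1,{\bf x}^1)$ (the reflexive half of Remark \ref{remark:accompanying_sequence} makes this legitimate) yields $\visualindist(\omega_1({\bf x}^1)) = \visualindist(\omega_2({\bf x}^1))$, and then applying closeness of $\omega_2,\omega_3$ to the pair $({\bf x}^1,{\bf x}^2)$ gives $\visualindist(\omega_2({\bf x}^1)) = \visualindist(\omega_3({\bf x}^2))$. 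Composition of these equalities completes the proof. The only point to watch is that the symmetric axiom in the definition is stated via \emph{ordered} pairs, so each invocation must be made with the correct order; there is no genuine analytic obstacle, since Proposition \ref{prop:quasi-surjective_asymptotic} absorbs the work of getting the right direction of containment for the reflexivity step.
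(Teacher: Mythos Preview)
Your proof is correct and follows essentially the same approach as the paper: reflexivity via Proposition \ref{prop:quasi-surjective_asymptotic}, symmetry by swapping ${\bf x}^1$ and ${\bf x}^2$, and transitivity by bridging through the pair $({\bf x}^1,{\bf x}^1)$ (which is exactly the content of \eqref{eq:accompany_close_at_infinity}). You spell out the two-inclusion argument for reflexivity a bit more explicitly than the paper does, but the ideas are identical.
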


\begin{proof}
Let ${\bf x}^1,{\bf x}^2\in \sq{X}$.
Suppose ${\bf x}^1$ and ${\bf x}^2$ are asymptotic.

\medskip
\noindent
(Reflexive law)\quad
This follows from Proposition \ref{prop:quasi-surjective_asymptotic}.

\medskip
\noindent
(Symmetric law)\quad
Take two mappings $\omega_1,\omega_2\in \ACAS(X,Y)$.
Since $\omega_1$ is close to $\omega_2$ at infinity,
$\visualindist(\omega_1({\bf x}^1))=\visualindist(\omega_2({\bf x}^2))$.
By interchanging the roles of ${\bf x}^1$ and ${\bf x}^2$,
$\visualindist(\omega_2({\bf x}^1))=\visualindist(\omega_1({\bf x}^2))$.
This means that $\omega_2$ is close to $\omega_1$ at infinity.

\medskip
\noindent
(Transitive law)\quad
Take three mappings $\omega_1,\omega_2,\omega_3\in \ACAS(X,Y)$.
Suppose that $\omega_i$ is close to $\omega_{i+1}$ at infinity ($i=1,2$).
Then,
from \eqref{eq:accompany_close_at_infinity},
$$
\visualindist(\omega_1({\bf x}^1))=\visualindist(\omega_2({\bf x}^1))
=\visualindist(\omega_3({\bf x}^2))
$$
and hence,
$\omega_1$
is close to $\omega_3$ at infinity.
\end{proof}

\subsection{Invertibility and Asymptotic quasi-inverse}
Define
$$
\ACINV(X,Y)=\{\omega\in \AC(X,Y)\mid \mbox{$\omega$ is invertible}\}
$$
(for the definition,
see \S\ref{subsec:definition}).
Set $\ACINV(X)=\ACINV(X,X)$ as the introduction.
Notice that any $\omega\in \ACINV(X,Y)$
admits an asymptotic quasi-inverse $\omega'\in \ACINV(Y,X)$,
and $\omega$ is also an asymptotic quasi-inverse of $\omega'$.

\begin{proposition}[Invertibility implies asymptotic-surjectivity]
\label{prop:invertible_means_surjectivity}
For any metric spaces $X$ and $Y$,
$\ACINV(X,Y)\subset \ACAS(X,Y)$.
\end{proposition}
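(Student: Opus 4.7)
The plan is to construct the required preimage sequence directly from the asymptotic quasi-inverse. Fix $\omega\in \ACINV(X,Y)$ and let $\omega'\in \AC(Y,X)$ be an asymptotic quasi-inverse of $\omega$, so that $\omega\circ\omega'$ is close at infinity to the identity $\mathrm{id}_Y$. Given an arbitrary convergent sequence at infinity ${\bf y}$ in $Y$, the candidate is simply ${\bf x}:=\omega'({\bf y})$.

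First I would verify that ${\bf x}$ is a legitimate sequence in the sense of the standing convention, i.e.\ that it converges at infinity in $X$. This is immediate from Remark \ref{remark:1} applied to $\omega'\in \AC(Y,X)$: since ${\bf y}$ converges at infinity in $Y$, so does $\omega'({\bf y})={\bf x}$ in $X$.

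Next I would show $\visualindist({\bf y})=\visualindist(\omega({\bf x}))$. By Remark \ref{remark:accompanying_sequence}(1), ${\bf y}$ is visually indistinguishable from itself, so ${\bf y}$ is asymptotic to ${\bf y}$. Applying the definition of ``close at infinity'' to the two mappings $\omega\circ\omega'$ and $\mathrm{id}_Y$, which are close at infinity by hypothesis, and to the pair of sequences $({\bf y},{\bf y})$, we conclude
\[
\visualindist(\omega\circ\omega'({\bf y}))=\visualindist(\mathrm{id}_Y({\bf y}))=\visualindist({\bf y}).
\]
Since $\omega\circ\omega'({\bf y})=\omega({\bf x})$, this gives precisely $\visualindist(\omega({\bf x}))=\visualindist({\bf y})$, establishing asymptotic-surjectivity of $\omega$.

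There is no real obstacle here; the argument is essentially a tautological unwinding of the definitions of ``invertible'', ``close at infinity'' and ``asymptotically-surjective''. The only subtle point worth being careful about is the convention that all sequences under discussion converge at infinity, which is what makes Remark \ref{remark:1} the right tool for guaranteeing that the candidate preimage ${\bf x}=\omega'({\bf y})$ is admissible in the first place.
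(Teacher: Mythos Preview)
Your proof is correct and follows essentially the same approach as the paper's own proof: set ${\bf x}=\omega'({\bf y})$ and use that $\omega\circ\omega'$ is close to the identity to conclude $\visualindist(\omega({\bf x}))=\visualindist({\bf y})$. You simply spell out a couple of details the paper leaves implicit (that ${\bf x}$ converges at infinity via Remark~\ref{remark:1}, and that ${\bf y}$ is asymptotic to itself), but the argument is the same.
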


\begin{proof}
Let $\omega\in \ACINV(X,Y)$.
Let $\omega'$ be an asymptotic quasi-inverse of $\omega$.
Let ${\bf y}\in \sq{Y}$.
Set ${\bf x}=\omega'({\bf y})$.
Since $\omega'$ is asymptotically conservative,
${\bf x}\in \sq{X}$.
Since $\omega\circ \omega'$ is close to the identity mapping on $Y$
at infinity,
from \eqref{eq:accompany_close_at_infinity},
$$
\visualindist(\omega({\bf x}))=\visualindist(\omega\circ \omega'({\bf y}))
=\visualindist({\bf y}).
$$
Therefore, 
we conclude $\omega\in \ACAS(X,Y)$.
\end{proof}

\begin{proposition}[Composition of mappings in $\ACINV$]
\label{prop:composition}
For $\omega_1\in \ACINV(Y,Z)$ and $\omega_2\in \ACINV(X,Y)$,
we have $\omega_1\circ \omega_2\in \ACINV(X,Z)$.
\end{proposition}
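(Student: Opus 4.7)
The plan is to exhibit an explicit asymptotic quasi-inverse for $\omega_1\circ\omega_2$, namely $\omega'_2\circ\omega'_1$, where $\omega'_i$ is an asymptotic quasi-inverse of $\omega_i$ for $i=1,2$. Once this candidate is fixed, both required properties (being in $\AC$ and having the two compositions close to the identity at infinity) reduce to applications of results already proved in the paper.

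First, by Proposition \ref{prop:properties_quasimorphisms}, both $\omega_1\circ\omega_2\in\AC(X,Z)$ and $\omega'_2\circ\omega'_1\in\AC(Z,X)$. Next, one writes
\begin{equation*}
(\omega'_2\circ\omega'_1)\circ(\omega_1\circ\omega_2)=\omega'_2\circ(\omega'_1\circ\omega_1)\circ\omega_2.
\end{equation*}
Since $\omega'_1\circ\omega_1$ is close at infinity to the identity on $Y$, and $\omega_2$ is trivially close to itself, Proposition \ref{prop:properties_quasimorphisms_close} gives that $(\omega'_1\circ\omega_1)\circ\omega_2$ is close at infinity to $\omega_2$. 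Applying the same proposition once more with $\omega'_2$ on the left, we obtain that $\omega'_2\circ(\omega'_1\circ\omega_1)\circ\omega_2$ is close at infinity to $\omega'_2\circ\omega_2$, which by hypothesis is close at infinity to the identity on $X$.

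To conclude, I would invoke the transitivity of ``closeness at infinity'' furnished by Proposition \ref{prop:GP_close_equivalence_relation}; this is legitimate because each of the mappings in the chain belongs to $\ACAS$ by Proposition \ref{prop:invertible_means_surjectivity} together with Proposition \ref{prop:properties_quasimorphisms}. This yields that $(\omega'_2\circ\omega'_1)\circ(\omega_1\circ\omega_2)$ is close at infinity to the identity on $X$. A symmetric argument, interchanging the roles of $X,Z$ and of the $\omega_i$ with their quasi-inverses, shows that $(\omega_1\circ\omega_2)\circ(\omega'_2\circ\omega'_1)$ is close at infinity to the identity on $Z$. Hence $\omega'_2\circ\omega'_1$ is an asymptotic quasi-inverse of $\omega_1\circ\omega_2$, and $\omega_1\circ\omega_2\in\ACINV(X,Z)$.

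I do not foresee a real obstacle here: the whole argument is a bookkeeping exercise assembling Propositions \ref{prop:properties_quasimorphisms}, \ref{prop:properties_quasimorphisms_close}, \ref{prop:invertible_means_surjectivity}, and \ref{prop:GP_close_equivalence_relation}. The only point requiring a little care is to verify the $\ACAS$ hypothesis before invoking transitivity of closeness; this is precisely what Proposition \ref{prop:invertible_means_surjectivity} is for, and it applies to each intermediate map because compositions of invertible asymptotically conservative mappings with identity maps (which are trivially invertible) remain in $\ACAS$.
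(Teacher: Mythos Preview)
Your proof is correct and reaches the same conclusion with the same candidate quasi-inverse $\omega'_2\circ\omega'_1$, but the route differs from the paper's. The paper verifies directly from the definition: it takes asymptotic sequences ${\bf z}^1,{\bf z}^2$ in $Z$, sets ${\bf x}^i=\omega'_2\circ\omega'_1({\bf z}^i)$, and tracks $\visualindist$ through the compositions using Proposition~\ref{prop:quasi-surjective_asymptotic} at each step. You instead work at the level of mappings, assembling the result from Propositions~\ref{prop:properties_quasimorphisms_close} and~\ref{prop:GP_close_equivalence_relation}. Your approach is more modular and arguably cleaner; the paper's is more hands-on but requires no auxiliary verification of hypotheses.

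One small point: your justification for why the intermediate maps lie in $\ACAS$ (``compositions of invertible asymptotically conservative mappings with identity maps'') is garbled. What you actually need is that $\omega'_2\circ\omega_2$ and the four-fold composition are in $\ACAS(X)$, and you cannot invoke $\ACINV\subset\ACAS$ for these since closure of $\ACINV$ under composition is precisely what is being proved. The clean fix is either to observe that compositions of $\ACAS$ maps are again $\ACAS$ (an easy direct check using Proposition~\ref{prop:quasi-surjective_asymptotic}), or to note that the transitivity clause in the proof of Proposition~\ref{prop:GP_close_equivalence_relation} does not actually use the $\ACAS$ hypothesis---only reflexivity does, and you invoke reflexivity only for $\omega_2$ and $\omega'_2$, which are in $\ACINV\subset\ACAS$ by Proposition~\ref{prop:invertible_means_surjectivity}. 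Either way the argument goes through.
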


\begin{proof}
Let $\omega'_i$ be an asymptotic quasi-inverse of $\omega_i$ for $i=1,2$.
Suppose ${\bf z}^1,{\bf z}^2\in \sq{Z}$ are asymptotic.
From Propositions \ref{prop:x-converges-infinity-omega-x-converges-infinity} and
 \ref{prop:properties_quasimorphisms},
${\bf x}^i=\omega'_2\circ \omega'_1({\bf z}^i)\in \sq{X}$ for $i=1,2$.
Since $\omega_2\circ \omega'_2$ is close to the identity mapping on $Y$,
$$
\visualindist(\omega_2({\bf x}^1))
=\visualindist(\omega_2(\omega'_2\circ \omega'_1({\bf z}^1)))
=\visualindist(\omega_2\circ \omega'_2(\omega'_1({\bf z}^1)))
=
\visualindist(\omega'_1({\bf z}^1)).
$$
From Proposition \ref{prop:invertible_means_surjectivity},
$\omega'_1$ is asymptotically surjective,
and from Proposition \ref{prop:quasi-surjective_asymptotic},
$\omega'_1({\bf z}^1)$ and $\omega'_1({\bf z}^2)$ are asymptotic.
Therefore,
$\visualindist(\omega_2({\bf x}^1))=\visualindist(\omega'_1({\bf z}^2))$.
Since $\omega_1$ is also asymptotically surjective,
by applying Proposition \ref{prop:quasi-surjective_asymptotic} again,
we have
\begin{align*}
\visualindist(\omega_1\circ\omega_2({\bf x}^1))
&=
\visualindist(\omega_1(\omega_2({\bf x}^1)))\\
&=\visualindist(\omega_1(\omega'_1({\bf z}^2)))
=\visualindist(\omega_1\circ \omega'_1({\bf z}^2))
=\visualindist({\bf z}^2)
\end{align*}
since $\omega_1\circ \omega'_1$ is asymptotically close to the identity mapping on $Z$ at infinity.
Therefore,
$$
\visualindist((\omega_1\circ \omega_2)\circ (\omega'_2\circ \omega'_1)({\bf z}^1))
=\visualindist(\omega_1\circ \omega_2({\bf x}^1))=\visualindist({\bf z}^2),
$$
which means that $(\omega_1\circ \omega_2)\circ (\omega'_2\circ \omega'_1)$
is close to the the identity mapping on $Z$ at infinity.
By the same argument,
we can see that $(\omega'_2\circ \omega'_1)\circ (\omega_1\circ \omega_2)$
is close to the identity mapping on $X$.
Therefore,
$\omega'_2\circ \omega'_1$ is an asymptotic quasi-inverse of $\omega_1\circ \omega_2$
and $\omega_1\circ \omega_2\in \ACINV(X,Z)$.
\end{proof}

\begin{proposition}[Stability of $\ACINV$ in $\ACAS$]
\label{prop:relation_closeness_at_infinity}
Let $\omega_1,\omega_2\in \ACAS(X,Y)$.
Suppose that $\omega_1$ and $\omega_2$ are close at infinity.
If $\omega_1\in \ACINV(X,Y)$,
so is $\omega_2$.
In addition,
any asymptotic quasi-inverse of $\omega_1$ is also that of $\omega_2$.
\end{proposition}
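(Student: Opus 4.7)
The plan is to show that any asymptotic quasi-inverse $\omega_1'$ of $\omega_1$ is automatically an asymptotic quasi-inverse of $\omega_2$; this would simultaneously establish $\omega_2\in\ACINV(X,Y)$ and the last assertion of the statement. So I would fix such an $\omega_1'$, and aim to verify that $\omega_1'\circ\omega_2$ is close to the identity on $X$ and that $\omega_2\circ\omega_1'$ is close to the identity on $Y$.

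The first preparatory step is to arrange that every pair of maps I wish to compare by ``close at infinity'' lies inside a common $\ACAS(U,V)$, so that the equivalence-relation machinery of Proposition \ref{prop:GP_close_equivalence_relation} is actually available. The identity mappings are plainly in $\ACAS$; $\omega_2\in\ACAS(X,Y)$ by hypothesis; and $\omega_1,\omega_1'\in\ACAS$ by Proposition \ref{prop:invertible_means_surjectivity}. A short direct check, in which Proposition \ref{prop:quasi-surjective_asymptotic} is used to transport a witness of asymptotic surjectivity through the outer map, shows that $\ACAS$ is closed under composition. In particular, $\omega_1'\circ\omega_2$ and $\omega_1'\circ\omega_1$ sit inside $\ACAS(X,X)$, while $\omega_2\circ\omega_1'$ and $\omega_1\circ\omega_1'$ sit inside $\ACAS(Y,Y)$, alongside the respective identities.

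The main step is then a double application of Proposition \ref{prop:properties_quasimorphisms_close}. Since $\omega_1$ is close to $\omega_2$ at infinity (the hypothesis) and $\omega_1'$ is close to itself (reflexivity, Proposition \ref{prop:GP_close_equivalence_relation}), the compositions $\omega_1'\circ\omega_1$ and $\omega_1'\circ\omega_2$ are close at infinity, and symmetrically $\omega_1\circ\omega_1'$ is close to $\omega_2\circ\omega_1'$ at infinity. Combining this with the defining properties of $\omega_1'$, namely that $\omega_1'\circ\omega_1$ is close to the identity on $X$ and $\omega_1\circ\omega_1'$ is close to the identity on $Y$, the transitivity clause of Proposition \ref{prop:GP_close_equivalence_relation} yields that $\omega_1'\circ\omega_2$ is close to the identity on $X$ and $\omega_2\circ\omega_1'$ is close to the identity on $Y$. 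Hence $\omega_1'$ is an asymptotic quasi-inverse of $\omega_2$, which gives $\omega_2\in\ACINV(X,Y)$ together with the last assertion.

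The main obstacle is nothing deep but a matter of bookkeeping: every invocation of transitivity for ``close at infinity'' requires the maps in question to share a common domain and codomain and to belong to the same $\ACAS(U,V)$, so the small composition lemma for $\ACAS$ should not be skipped. Once that is in hand, the argument is a mechanical assembly of Propositions \ref{prop:invertible_means_surjectivity}, \ref{prop:quasi-surjective_asymptotic}, \ref{prop:GP_close_equivalence_relation} and \ref{prop:properties_quasimorphisms_close}, with no further geometric input about $X$ or $Y$ needed.
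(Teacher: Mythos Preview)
Your proof is correct and follows the same overall strategy as the paper: fix an asymptotic quasi-inverse $\omega_1'$ of $\omega_1$ and verify that $\omega_1'\circ\omega_2$ and $\omega_2\circ\omega_1'$ are close to the respective identities. The difference is only one of packaging. The paper works directly with sequences, essentially re-deriving the needed instances of Propositions~\ref{prop:properties_quasimorphisms_close} and~\ref{prop:quasi-surjective_asymptotic} inline rather than invoking them, and therefore never needs the auxiliary fact that $\ACAS$ is closed under composition. Your version quotes Propositions~\ref{prop:properties_quasimorphisms_close} and~\ref{prop:GP_close_equivalence_relation} as black boxes, which is cleaner and more modular but forces you to supply the short closure lemma for $\ACAS$ (not stated elsewhere in the paper, though your sketch via Proposition~\ref{prop:quasi-surjective_asymptotic} is correct). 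Both routes are equally elementary; yours makes the formal structure more visible, the paper's is slightly more self-contained.
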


\begin{proof}
Let $\omega'_1$ be an asymptotic quasi-inverse of $\omega_1$.
Suppose ${\bf x}^1,{\bf x}^2\in \sq{X}$ are asymptotic.
Since $\omega_1$ and $\omega_2$ are close at infinity,
$$
\visualindist(\omega_1({\bf x}^1))=\visualindist(
\omega_2({\bf x}^2)).
$$
Since $\omega'_1$ is asymptotically surjective,
by Proposition \ref{prop:quasi-surjective_asymptotic},
we have
\begin{equation}
\label{eq:quasi-inverse_1}
\visualindist(\omega'_1\circ \omega_2({\bf x}^2))
=
\visualindist(\omega'_1\circ \omega_1({\bf x}^1))
=
\visualindist({\bf x}^1).
\end{equation}

Suppose that ${\bf y}^1$,
${\bf y}^2\in \squ{Y}$
are asymptotic.
Since $\omega'_1$ is asymptotically surjective again,
$$
\visualindist(\omega'_1({\bf y}^1))
=
\visualindist(\omega'_1({\bf y}^2)).
$$
Since $\omega_1$ and $\omega_2$ are close at infinity,
we deduce
\begin{equation}
\label{eq:quasi-inverse_2}
\visualindist(\omega_2\circ \omega'_1({\bf y}^1))
=
\visualindist(\omega_1\circ \omega'_1({\bf y}^2))
=\visualindist({\bf y}^2).
\end{equation}
From \eqref{eq:quasi-inverse_1}
and \eqref{eq:quasi-inverse_2},
we conclude that $\omega'_1$ is an asymptotic quasi-inverse of $\omega_2$,
and hence $\omega_2\in \ACINV(X,Y)$.
\end{proof}
%
%
%
%

\subsection{Monoids and Semigroup congruence}
\label{subsec:Monoild_semingroups}
We have defined three kinds of classes of mappings between metric spaces.
By definition and Proposition \ref{prop:invertible_means_surjectivity},
the relation of the classes is given as
\begin{equation}
\label{eq:relation-ACs1}
\ACINV(X,Y)
\subset
\ACAS(X,Y)
\subset 
\AC(X,Y)
(\subset Y^{X})
\end{equation}
for metric spaces $X$ and $Y$.

The following theorem follows from
Propositions \ref{prop:properties_quasimorphisms},
\ref{prop:composition_as},
and \ref{prop:composition}.

\begin{theorem}
\label{thm:properties_quasimorphisms_identity}
$\AC(X)$ admits a canonical monoid structure with respect to
the composition of mappings.
The identity element of $\AC(X)$ is the identity mapping on $X$.
In addition, $\ACAS(X)$ and $\ACINV(X)$ are submonoids of $\AC(X)$.
\end{theorem}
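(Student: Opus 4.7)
The plan is to assemble Theorem \ref{thm:properties_quasimorphisms_identity} from the two propositions cited in the excerpt together with the trivial observation that the identity map is asymptotically conservative. Concretely, I want to verify the three monoid axioms for $\AC(X)$ (closure under composition, associativity, existence of a two-sided identity), then check that $\ACINV(X)$ inherits these properties by also containing the identity and being closed under the same composition.

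First I would note that closure of $\AC(X)$ under composition is immediate from Proposition \ref{prop:properties_quasimorphisms} applied with $X=Y=Z$. Associativity is inherited from the associativity of function composition on the set of all self-maps of $X$, since $\AC(X)$ is just a subset of this associative magma. For the identity element, I would observe that the identity mapping $\mathrm{id}_X$ trivially satisfies Definition \ref{def:asymptotically_conservative}: for any sequence ${\bf x}$ in $X$ we have $\mathrm{id}_X(\visualindist({\bf x}))=\visualindist({\bf x})=\visualindist(\mathrm{id}_X({\bf x}))$, so condition (1) holds with equality, and condition (2) reads ``if ${\bf z}\in\visualindist({\bf x})$ then ${\bf z}\in\visualindist({\bf x})$," which is tautological. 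Hence $\mathrm{id}_X\in\AC(X)$ and clearly acts as a two-sided identity under composition. This completes the monoid structure on $\AC(X)$.

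To show $\ACINV(X)$ is a submonoid, I would first note that $\mathrm{id}_X\in\ACINV(X)$: the identity is its own asymptotic quasi-inverse, since $\mathrm{id}_X\circ\mathrm{id}_X=\mathrm{id}_X$ is trivially close at infinity to $\mathrm{id}_X$ (it \emph{equals} $\mathrm{id}_X$, so for any asymptotic pair ${\bf x}^1,{\bf x}^2$ we have $\visualindist(\mathrm{id}_X({\bf x}^1))=\visualindist({\bf x}^1)=\visualindist({\bf x}^2)=\visualindist(\mathrm{id}_X({\bf x}^2))$ by Definition \ref{def:asymptotic}). Closure of $\ACINV(X)$ under composition is precisely Proposition \ref{prop:composition} applied with $X=Y=Z$, which produces an explicit asymptotic quasi-inverse for $\omega_1\circ\omega_2$ as $\omega_2'\circ\omega_1'$. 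Together with the inclusion $\ACINV(X)\subset\AC(X)$, this shows $\ACINV(X)$ is a submonoid of $\AC(X)$.

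There is essentially no hard step here, since all the real content has already been absorbed into the two propositions: the only thing to check by hand is the tautological verification that $\mathrm{id}_X$ lies in both $\AC(X)$ and $\ACINV(X)$, and that composition of set-maps is associative. The presentation can therefore be very short, and the main care is simply to cite the right propositions and to be explicit that no further hypothesis on $X$ is used.
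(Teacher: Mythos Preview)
Your proposal is correct and follows exactly the paper's approach: the paper simply states that the theorem follows from Propositions \ref{prop:properties_quasimorphisms} and \ref{prop:composition}, and you have spelled out the trivial remaining verifications about associativity and the identity map that the paper leaves implicit.
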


Let $G$ be a semigroup.
A \emph{semigroup congruence} is an equivalence relation $\sim$ on $G$
with the property that for $x,y,z,w\in G$,
$x\sim y$ and $z\sim w$ imply
$xz\sim yw$.
Then,
the congruence classes
$$
G/\sim=\{[g]\mid g\in G\}
$$
is also a semigroup with the product $[g_1][g_2]=[g_1g_2]$.
We call $G/\sim$ the \emph{quotient semigroup} of $G$
with the semigroup congruence $\sim$.

We define a relation on $\ACAS(X)$
by using the closeness at infinity.
Namely,
for two $\omega_1$ and $\omega_2\in \ACAS(X)$,
$\omega_1$ is \emph{equivalent} to $\omega_2$
if $\omega_1$ is close to $\omega_2$ at infinity.
From Propositions
\ref{prop:properties_quasimorphisms_close}
and
\ref{prop:GP_close_equivalence_relation},
this relation is a subgroup congruence on $\ACAS(X)$.
We define the quotient monoid by
$$
\ACmonoid(X)=\ACAS(X)/(\mbox{close at infinity}).
$$
We also define the semigroup congruence on $\ACINV(X)$
in the same procedure,
and obtain the quetient semigroup by
$$
\ACGroup(X)=\ACINV(X)/(\mbox{close at infinity}).
$$
As a result,
we summarize as follows.

\begin{theorem}[Group $\ACGroup(X)$]
\label{thm:quotient_semigroup}
Let $X$ be a metric space.
Then,
the quotient semigroup $\ACGroup(X)$ is a group.
The identity element of $\ACGroup(X)$ is the congruence class of the identity mapping,
and the inverse of the congruence class $[\omega]$ of $\omega\in \ACINV(X)$ is the congruence class
of an asymptotic quasi-inverse of $\omega$.
\end{theorem}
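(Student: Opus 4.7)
The plan is to assemble the group structure on $\ACGroup(X)$ directly from the facts already established in the excerpt. Everything rests on interpreting the defining property of an asymptotic quasi-inverse as an inverse relation in the quotient.

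First, I would verify that $\ACGroup(X)$ inherits a monoid structure from $\ACINV(X)$. The composition is well-defined on congruence classes because closeness at infinity is a semigroup congruence on $\ACINV(X)$: if $\omega_1\sim\omega_1'$ and $\omega_2\sim\omega_2'$ in $\ACINV(X)$, Proposition \ref{prop:properties_quasimorphisms_close} gives $\omega_1\circ\omega_2\sim\omega_1'\circ\omega_2'$, while Proposition \ref{prop:composition} ensures $\omega_1\circ\omega_2,\ \omega_1'\circ\omega_2'\in\ACINV(X)$. Associativity descends automatically from associativity in $\ACINV(X)$, and the class $[\mathrm{id}_X]$ is a two-sided identity since $\mathrm{id}_X\circ\omega=\omega\circ\mathrm{id}_X=\omega$ in $\ACINV(X)$ for every $\omega$.

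Next, I would produce inverses. Given $[\omega]\in\ACGroup(X)$, pick a representative $\omega\in\ACINV(X)$ and an asymptotic quasi-inverse $\omega'\in\AC(X)$ of $\omega$. As noted in the excerpt just before Proposition \ref{prop:invertible_means_surjectivity}, $\omega'$ itself lies in $\ACINV(X)$ (with $\omega$ serving as one of its asymptotic quasi-inverses), so $[\omega']\in\ACGroup(X)$. By the very definition of asymptotic quasi-inverse, $\omega'\circ\omega$ and $\omega\circ\omega'$ are close at infinity to the identity on $X$, which means
\begin{equation*}
[\omega']\,[\omega]=[\omega'\circ\omega]=[\mathrm{id}_X],\qquad
[\omega]\,[\omega']=[\omega\circ\omega']=[\mathrm{id}_X].
\end{equation*}
Thus $[\omega']$ is a two-sided inverse of $[\omega]$, completing the proof that $\ACGroup(X)$ is a group and identifying the inverse with the congruence class of any asymptotic quasi-inverse.

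Finally, I would remark that the class $[\omega']$ does not depend on the choice of representative or of quasi-inverse: if $\omega\sim\omega_2$ in $\ACINV(X)$ and $\omega'$ is a quasi-inverse of $\omega$, Proposition \ref{prop:relation_closeness_at_infinity} tells us $\omega'$ is also a quasi-inverse of $\omega_2$; and if $\omega_1',\omega_2'$ are two quasi-inverses of $\omega$, the standard group-theoretic identity $[\omega_1']=[\omega_1'][\omega][\omega_2']=[\omega_2']$ forces them to be congruent. There is no real obstacle in this argument — the content is entirely bookkeeping, the one point worth flagging carefully is that asymptotic quasi-inverses automatically live in $\ACINV(X)$ (rather than only in $\AC(X)$), since this is what guarantees that the inverse of $[\omega]$ actually exists inside $\ACGroup(X)$ and not merely in some larger quotient of $\AC(X)$.
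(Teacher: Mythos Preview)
Your proof is correct and follows essentially the same approach as the paper, which simply states that the theorem follows ``from the definition of asymptotic quasi-inverses and Proposition \ref{prop:relation_closeness_at_infinity}.'' You have spelled out the details the paper leaves implicit, including the well-definedness of the inverse class; nothing is missing or different in substance.
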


\begin{corollary}
Let $X$ and $Y$ be metric spaces.
Let
$\omega\in \ACINV(X,Y)$ 
and $\omega'$ an asymptotic quasi-inverse of $\omega$.
Then,
the mapping
\begin{align*}
&\ACAS(X)\ni f\mapsto \omega\circ f\circ \omega'\in \ACAS(Y)
\\
&\ACINV(X)\ni f\mapsto \omega\circ f\circ \omega'\in \ACINV(Y)
\end{align*}
induces isomorphisms
\begin{align*}
\ACmonoid(X)\ni [f]\mapsto [\omega\circ f\circ \omega']
\in \ACmonoid(Y)
\\
\ACGroup(X)\ni [f]\mapsto [\omega\circ f\circ \omega']
\in \ACGroup(Y).
\end{align*}
\end{corollary}

Notice from Proposition \ref{prop:relation_closeness_at_infinity}
that any equivalence class in $\ACGroup(X)$
consists of elements in $\ACINV(X)$.
Hence,
we conclude the following.
\begin{theorem}
The inclusion
$\ACINV(X)\hookrightarrow \ACAS(X)$
induces a monoid monomorphism
\begin{equation}
\label{eq:inclusion-acgroup-acmonoid}
\ACGroup(X)\hookrightarrow \ACmonoid(X).
\end{equation}
In other words,
$\ACGroup(X)$ is a subgroup of $\ACmonoid(X)$.
\end{theorem}
The monomorphism \eqref{eq:inclusion-acgroup-acmonoid}
could be an isomorphism.
The author does not know whether it is true or not in general.

\section{Comparison with the coarse geometry}
\label{sec:comparison_with_coarse_geometry}
\subsection{Backgrounds from the coarse geometry}
\subsubsection{Parallelism}
Two mappings $\omega,\xi\in Y^{X}$ between metric spaces
are said to be \emph{parallel} if and only if
$$
\sup_{x\in X}d_Y(\omega(x),\xi(x))<\infty
$$
(cf. \S1.A' in \cite{Gromov2}).
The ``parallelism'' defines an equivalence relation 
on any subclass in $Y^{X}$.
If two mappings $\omega,\xi\in Y^{X}$ are parallel,
\begin{align}
\sup_{x,z\in X}
|\gromov{\omega(x)}{\omega(z)}{y_0}^Y
-
\gromov{\xi(x)}{\xi(z)}{y_0}^Y|
<\infty
\label{eq:proof-omega-xi}\\
\sup_{x\in X,y\in Y}
|\gromov{\omega(x)}{y}{y_0}^Y
-
\gromov{\xi(x)}{y}{y_0}^Y|
<\infty.
\label{eq:proof-omega-xi2}
\end{align}
From \eqref{eq:proof-omega-xi2},
for any sequence ${\bf x}\in \squ{X}$,
it holds
\begin{equation}
\visualindist(\omega({\bf x}))=\visualindist(\xi({\bf x})).
\label{eq:proof-omega-xi3}
\end{equation}

\subsubsection{Quasi-isometries}
A mapping $\omega\in Y^{X}$ satisfies
$$
d_{Y}(\omega(x_{1}),\omega(x_{2}))\le Kd_{X}(x_{1},x_{2})+D
$$
for all $x_{1},x_{2}\in X$,
we call $\omega$ a \emph{coarsely $(K,D)$-Lipschitz}.
If $\omega\in Y^X$ satisfies
$$
\frac{1}{K}d_{X}(x_{1},x_{2})-D\le d_{Y}(\omega(x_{1}),\omega(x_{2}))
$$
for all $x_{1},x_{2}\in X$,
we call $\omega$ a \emph{coarsely co-$(K,D)$-Lipschitz}.
A mapping $\omega\in Y^X$ is said to be $(K,D)$-\emph{quasi-isometry}
if $\omega$ is both coarsely $(K,D)$-Lipschitz and coarsely co-$(K,D)$-Lipschitz.
A mapping $\omega\in Y^{X}$ is $D$-\emph{cobounded}
if the $D$-neighborhood of the image of $X$ under $\omega$ coincides with $Y$.
A \emph{quasi-inverse}
of a mapping $\omega\in Y^X$ is a mapping $\omega'\in X^Y$ such that 
$\omega'\circ \omega$ and $\omega\circ \omega'$ are parallel to the identity mappings.
Usually,
quasi-inverses are assumed to be quasi-isometry.
However, we do not assume so for our purpose.
Any quasi-inverse of a quasi-isometry is automatically a quasi-isometry.
Any cobounded quasi-isometry admits a quasi-inverse.

Let $\qi(X,Y)$ be the set of cobounded quasi-isometries from $X$ to $Y$.
Then,
$\qi(X)={\rm QI}(X,X)$ admits a monoid structure
defined by composition.
One can easily check that the parallelism is a subgroup congruence on $\qi(X)$.
Hence we have a quotient group defined by
\begin{equation}
\label{eq:qi-group}
\qigroup(X)=\qi(X)/(\mbox{parallelism}).
\end{equation}
The group $\qigroup(X)$ is a central object in the coarse geometry
(cf. \S I.8 in \cite{BF})

\subsection{Asymptotically conservative mappings in the coarse geometry}
\label{subsec:asymptoticallyconservativeincoarsegeometry}
%
%

Recall that a mapping $\omega\in Y^{X}$ is called \emph{weakly asymptotically conservative}
if 
$
\omega(\visualindist({\bf x}))\subset \visualindist(\omega_{1}({\bf x}))
$
for all sequence ${\bf x}\in \squ{X}$.
Let 
$$
\AC^{w}(X,Y)=\{\omega\in Y^{X}\mid \mbox{$\omega$ is weakly asymptotically conservative}\}.
$$
Set $\AC^{w}(X)=\AC^{w}(X,X)$.
From \eqref{eq:relation-ACs1},
we have
\begin{equation}
\label{eq:relation-ACs2}
\ACINV(X,Y)
\subset
\ACAS(X,Y)
\subset 
\AC(X,Y)
\subset
\AC^{w}(X,Y)
(\subset Y^{X})
\end{equation}
for metric spaces $X$ and $Y$.

A class $M$ of $Y^{X}$ is said to be \emph{stable under parallelism}
if a mapping $\omega\in Y^{X}$ is parallel to some $\xi\in M$,
then $\omega\in M$.

\begin{proposition}[Stability under parallelism]
\label{prop:stability}
All classes
$\AC^{w}(X,Y)$,
$\AC(X,Y)$,
$\ACAS(X,Y)$ and $\ACINV(X,Y)$
are stable under the parallelism.
\end{proposition}

\begin{proof}
Suppose that $\omega,\xi\in Y^{X}$ are parallel.

\medskip
\noindent
(i)\quad
Suppose $\xi\in \AC^{w}(X,Y)$.
Let ${\bf x}\in \squ{X}$.
Take ${\bf z}\in \visualindist({\bf x})$.
Since
$\xi$ is weakly asymptotically conservative,
$\xi({\bf z})\in \xi(\visualindist({\bf x}))\subset \visualindist(\xi({\bf x}))$.
From \eqref{eq:proof-omega-xi},
we have $\omega({\bf z})\in \visualindist(\omega({\bf x}))$
and
$\omega(\visualindist({\bf x}))\subset \visualindist(\omega({\bf x}))$.
Hence $\omega\in \AC^{w}(X,Y)$.

\medskip
\noindent
(ii)\quad
Suppose $\xi\in \AC(X,Y)$.
From the argument above,
$\omega\in \AC^{w}(X,Y)$.
Suppose a sequence ${\bf z}$ in $X$ satisfies $\omega({\bf z})\in \visualindist(\omega({\bf x}))$.
From \eqref{eq:proof-omega-xi} again,
$\xi({\bf z})\in \visualindist(\xi({\bf x}))$.
Since $\xi$ is asymptotically conservative,
${\bf z}\in \visualindist({\bf x})$
and $\omega\in \AC(X,Y)$.

\medskip
\noindent
(iii)\quad
Suppose $\xi\in \ACAS(X,Y)$.
Let ${\bf y}\in \sq{Y}$.
Take ${\bf x}\in \sq{X}$ such that $\visualindist({\bf y})=\visualindist(\xi({\bf x}))$.
From \eqref{eq:proof-omega-xi3},
we have
$$
\visualindist({\bf y})=\visualindist(\xi({\bf x}))
=\visualindist(\omega({\bf x})),
$$
which implies $\omega\in \ACAS(X,Y)$.

\medskip
\noindent
(iv)\quad
Suppose $\xi\in \ACINV(X,Y)$.
Let $\xi'\in \ACINV(Y,X)$ be an asymptotic quasi-inverse of $\xi$.
Let ${\bf x}^{1},{\bf x}^{2}\in \sq{X}$
with $\visualindist({\bf x}^{1})=\visualindist({\bf x}^{2})$.
Since $\xi'$ is asymptotically surjective,
by Proposition \ref{prop:quasi-surjective_asymptotic}
and \eqref{eq:proof-omega-xi3},
we have
$$
\visualindist({\bf x}^{1})=\visualindist(\xi'\circ \xi({\bf x}^{2}))
=\visualindist(\xi'\circ \omega({\bf x}^{2})),
$$
and hence $\xi'\circ \omega$ is close to the identity mapping on $X$.

To prove the converse,
we notice from the above that 
$\omega$ is asymptotically surjective and asymptotically conservative
from Proposition \ref{prop:invertible_means_surjectivity}.
Then,
by Propositions
\ref{prop:properties_quasimorphisms} and \ref{prop:composition_as},
$\omega\circ \xi'$ is also asymptotically surjective and
asymptotically conservative.

Let ${\bf y}^{1},{\bf y}^{2}\in \sq{Y}$ with
with $\visualindist({\bf y}^{1})=\visualindist({\bf y}^{2})$.
Then,
since $\xi'({\bf y}^{i})\in \sq{X}$ for $i=1,2$,
by Proposition \ref{prop:quasi-surjective_asymptotic} and \eqref{eq:proof-omega-xi3}
again,
we have
$$
\visualindist(\xi'({\bf y}^{1}))=\visualindist(\xi'({\bf y}^{2}))
$$
and 
$$
\visualindist(\omega\circ \xi'({\bf y}^{1}))=\visualindist(\xi\circ \xi'({\bf y}^{2}))
=\visualindist({\bf y}^{2}),
$$
and $\omega\circ \xi'$ is close to the identity mapping on $Y$.
Therefore,
we conclude that $\omega\in \ACINV(X,Y)$.
\end{proof}

The following proposition gives comparisons between items in rows
of the correspondence table in the introduction
(cf. Table \ref{table:1}).

\begin{proposition}[Comparison]
\label{prop:comparison}
\begin{enumerate}

\item
\label{item:AsymptoticallyConservativeMappingsCoarseGeometry2}
A cobounded asymptotically conservative mapping
is asymptotically surjective.

\item
\label{item:AsymptoticallyConservativeMappingsCoarseGeometry4}
If two asymptotically surjective,
asymptotically conservative mappings are parallel,
they are close at infinity.

\item
\label{item:AsymptoticallyConservativeMappingsCoarseGeometry3}
If $\omega\in Y^X$ admits a quasi-inverse $\omega'\in X^Y$
in the sense of the coarse geometry,
$\omega'\circ \omega$ and $\omega\circ \omega'$
are close to the identity mappings on $X$ and $Y$ at infinity,
respectively.
%
\end{enumerate}
\end{proposition}

\begin{proof}
(\ref{item:AsymptoticallyConservativeMappingsCoarseGeometry2})\quad
Let ${\bf y}=\{y_n\}_{n\in \mathbb{N}}\in \sq{Y}$.
Take
$x_n\in X$ such that $d_Y(\omega(x_n),y_n)\le D_0$
where $D_0>0$ is independent of $n$.
Since
$$
|\gromov{\omega({\bf x})}{\omega({\bf x})}{y_0}^Y
-
\gromov{{\bf y}}{{\bf y}}{y_0}^Y|
\le 2D_0
$$
for $y_{0}\in Y$,
we have $\omega({\bf x})\in \sq{Y}$ and ${\bf x}\in \sq{X}$
from Proposition \ref{prop:x-converges-infinity-omega-x-converges-infinity}.
Since
$$
|\gromov{\omega({\bf x})}{{\bf z}}{y_0}^Y
-
\gromov{{\bf y}}{{\bf z}}{y_0}^Y|
\le D_0
$$
for every sequence ${\bf z}=\{z_n\}_{n\in \mathbb{N}}\in \squ{Y}$,
we obtain $\visualindist({\bf y})=\visualindist(\omega({\bf x}))$.
Thus,
$\omega$ is asymptotically surjective.

\medskip
\noindent
(\ref{item:AsymptoticallyConservativeMappingsCoarseGeometry4})\quad
Let $\omega_{1}$, $\omega_{2}\in \ACAS(X,Y)$.
Suppose that $\omega_{1}$ is parallel to $\omega_{2}$.
Take asymptotic sequences ${\bf x}^1,{\bf x}^2\in \sq{X}$.
Since $\omega_{2}$ is asymptotically surjective,
from Proposition \ref{prop:quasi-surjective_asymptotic}
and \eqref{eq:proof-omega-xi3},
we conclude that
$$
\visualindist(\omega_{1}({\bf x}^1))
=\visualindist(\omega_{2}({\bf x}^1))=\visualindist(\omega_{2}({\bf x}^2))
$$
and $\omega_{1}$ and $\omega_{2}$ are close at infinity.

\medskip
\noindent
(\ref{item:AsymptoticallyConservativeMappingsCoarseGeometry3})\quad
Since the identity mapping is asymptotically surjective and asymptotically conservative,
from Proposition \ref{prop:stability},
$\omega'\circ \omega$ and $\omega\circ \omega'$
are also asymptotically surjective and asymptotically conservative.
Hence,
From above (\ref{item:AsymptoticallyConservativeMappingsCoarseGeometry4}),
we conclude what we wanted.
%
\end{proof}

\subsection{Criterion for subclasses to be compatible in $\ACGroup$}
\label{subsection:Criterion}
Let $M$ be a subclass of $X^{X}$.
Consider the following conditions.
\begin{enumerate}
\item[(S1)]
$M$ is a monoid with the operation defined by composition,
and the parallelism is a semigroup congruence on $M$.
%
\item[(S2)]
Any element in $M$ is cobounded.
\item[(S3)]
Any element in $M$ admits a quasi-inverse in $M$ in the coarse geometry.
\end{enumerate}
Notice that the condition (S3) implies (S2).
Under the condition (S1),
the quotient set $\mathcal{M}=M/(\mbox{parallelism})$ has a canonical monoid structure,
and if $M$ satisifes all conditions,
$\mathcal{M}$ has a canonical group structure.
For instance,
the monoid $\qi(X)$ of cobounded self quasi-isometries on $X$ satisfies all conditions above.

\begin{proposition}[Criterion]
\label{prop:criterion_ACw-AC}
Let $M$ be a subclass of $X^{X}$
satisfying the condition {\rm (S1)} posed above.
\begin{enumerate}
\item
Suppose in addition that $M$ satisfies the condition {\rm (S2)} posed above.
When $M\subset \AC(X)$,
then $M\subset \ACAS(X)$.
The inclusion $M\hookrightarrow \ACAS(X)$ induces 
maps (as sets)
$$
\mathcal{M}=M/(\mbox{parallelism})
\to
M/(\mbox{close at infinity})
\to
\ACmonoid(X)
$$
such that the conposition of the maps
$\mathcal{M}\hookrightarrow \ACmonoid(X)$
is a monoid homomorphism.
\item
Suppose that $M$ satisfies the condition {\rm (S3)} posed above.
When $M\subset \AC^{w}(X)$,
then $M\subset \ACINV(X)$.
The inclusion $M\hookrightarrow \ACINV(X)$ induces 
maps (as sets)
\begin{equation}
\label{eq:MtoAC-group}
\mathcal{M}
\to
M/(\mbox{close at infinity})
\to
\ACGroup(X)
\end{equation}
such that the conposition of the maps
$\mathcal{M}\to \ACGroup(X)$
is a group homomorphism.
\end{enumerate}
\end{proposition}

\begin{proof}
(1)\quad
From (\ref{item:AsymptoticallyConservativeMappingsCoarseGeometry2}) of Proposition \ref{prop:comparison},
$M\subset \ACAS(X)$,
and the ``closeness at infinity''
is an equivalence relation on $M$ by Proposition \ref{prop:GP_close_equivalence_relation}.
Therefore,
from (\ref{item:AsymptoticallyConservativeMappingsCoarseGeometry4}) of Proposition \ref{prop:comparison},
we have well-defined mappings between quotient sets
\begin{align*}
\mathcal{M}
&\to
M/(\mbox{close at infinity})\\
&\to 
\ACAS(X)/(\mbox{close at infinity})
=\ACmonoid(X).
\end{align*}
From (\ref{item:AsymptoticallyConservativeMappingsCoarseGeometry4}) of Proposition \ref{prop:comparison} again,
the composition
$$
\mathcal{M}
\to
\ACmonoid(X)
$$
induces a monoid homomorphism.

\medskip
\noindent
(2)\quad
We first check that $M\subset \AC(X)$.
Let $\omega\in M$ and $\omega'\in M$ a quasi-inverse of $\omega$.
Let ${\bf z}$ be an unbounded sequence in $X$ with $\omega({\bf z})\in \visualindist(\omega({\bf x}))$.
Since $\omega'$ is weakly asymptotically conservative,
we have
$$
\omega'\circ \omega({\bf z})\in \omega'(\visualindist(\omega({\bf x})))
\subset \visualindist(\omega'\circ \omega({\bf x}))
=\visualindist({\bf x})
$$
and
$$
|\gromov{x}{z}{x_{0}}^{X}-\gromov{\omega'\circ \omega(x)}{\omega'\circ \omega(z)}{x_{0}}^{X}|=O(1)
$$
for all $x\in {\bf x}$ and $z\in {\bf z}$,
since is parallel to the identity mapping on $X$
and infinity from Proposition \ref{prop:quasi-surjective_asymptotic}
and Proposition \ref{prop:stability} and
(\ref{item:AsymptoticallyConservativeMappingsCoarseGeometry4}) of Proposition \ref{prop:comparison}.
Therefore, ${\bf z}\in \visualindist({\bf x})$,
and $\omega$ is asymptotically conservative.

%
Then,
by applying the same argument as above,
we have a mappings
$$
\mathcal{M}
\to
M/(\mbox{close at infinity})\\
\to 
\ACGroup(X).
$$
such that the composition
\begin{equation}
\label{eq:group-homo-M-to-AC}
\mathcal{M}
\to\ACGroup(X)
\end{equation}
is a monoid homomorphism.
From (\ref{item:AsymptoticallyConservativeMappingsCoarseGeometry3}) of Proposition \ref{prop:comparison},
any quasi-inverse of $\omega\in M$
corresponds to a asymptotic quasi-inverse of $\omega$
in $\ACINV(X)$
under the inclusion $\mathcal{M}\hookrightarrow \ACINV(X)$.
Hence \eqref{eq:group-homo-M-to-AC}
is a group homomorphism.
\end{proof}

\begin{corollary}[Criterion for quasi-isometries]
\label{coro:criterion-quasi-isometries}
Let $X$ be a metric space.
If $\qi(X)\subset \AC^w(X)$,
then the inclusion $\qi(X)
\hookrightarrow \ACINV(X)$
induces  a group homomorphism
$$
\qigroup (X)\to \ACGroup(X).
$$
\end{corollary}

\subsection{Remarks}
The notions of quasi-isometries and the asymptotically conservation
are indepenent in general:
\begin{enumerate}
\item
An asymptotically conservative mapping need not be a quasi-isometry.
Indeed,
let $X=[0,\infty)$ with $d_X(x_1,x_2)=|x_1-x_2|$.
Let $x_0=0$ be the reference point.
Then,
any increasing function $f\colon X\to X$ with $f(0)=0$
is asymptotically conservative.
\item
Meanwhile,
little is known as to when quasi-isometries become asymptotically conservative.
For instance,
any rough homothety is asymptotically conservative
(cf. \eqref{eq:K_D_homothety}).
Actually,
it follows from the following fact that
any rough homothety $\omega$ satisfies
\begin{equation}
\label{eq:homothetic_implies_rough_gromov_product}
|K\gromov{x_1}{x_2}{x_0}^X-
\gromov{\omega(x_1)}{\omega(x_2)}{y_0}^Y|
\le
D'\quad
(x_1,x_2\in X)
\end{equation}
for some $K,D'>0$.
\item
In general,
the homomorphism \eqref{eq:MtoAC-group} is not injective:
Take an increasing sequence $\{a_{n}\}_{n=0}^{\infty}\subset \mathbb{Z}$ such that $a_{0}=0$
and $a_{n+1}-a_{n}\to \infty$.
Consider a graph in $\mathbb{R}^{2}$ defined by
$$
X=[0,\infty)\times \{-1,1\}\cup \cup_{n=0}^{\infty}\{a_{n}\}\times [-1,1]\subset \mathbb{R}^{2}.
$$
Then,
$X$ is a metric space equipped with the graph metric
such that the length of the edges are measured by the Euclidean metric.
Let $x_{0}=(0,0)\in X$ as a base point.
In this case,
$\sq{X}=\squ{X}$
and $\visualindist({{\bf x}})=\sq{X}$ for all ${\bf x}\in \squ{X}$.
Hence,
$\ACGroup (X)$ is the trivial group.
Furthermore,
$X$ is WBGP in the sense of \S\ref{sec:stable_at_infinity},
and any quasi-isometry on $X$ is weakly asymptotically conservative
(cf. Proposition \ref{prop:relaxation-definition}).
Thus,
we have a homomorphism \eqref{eq:MtoAC-group} in this case.
Define an isometry $r$ on $X$ by $r(x,y)=(x,-y)$.
Then,
$r$ is not parallel to the identity mapping $id_{X}$,
and $id_{X}$ and $r$ are contained in the different classes in $\qigroup(X)$.

\item
In general,
the homomorphism \eqref{eq:MtoAC-group} is not surjective:
When $X=Y=\mathbb{D}$ equipped with the Poincar\'e distance,
$\ACGroup(\mathbb{D})$ is canonically isomorphic to
the group of homeomorphism on $\partial \mathbb{D}$ via extensions.
hence,
we can find an invertible asymptotically conservative mapping on $\mathbb{D}$ which is not parallel to
any cobounded quasi-isometry.
\end{enumerate}

\section{Relaxation of the definition}
\label{sec:stable_at_infinity}

\subsection{Metric spaces which are WBGP}
\label{subsec:stable_at_infinity}
Let $X$ be a metric space.
For ${\bf x}\in \squ{X}$,
we define
$$
\sqt{{\bf x}}=\{{\bf x}'\mid
\mbox{subsequences of ${\bf x}$
with ${\bf x}'\in \sq{X}$}\}.
$$
A metric space $X$ is called \emph{well-behaved at infinity with respect to the Gromov product}
(WBGP) if
$\sqt{{\bf x}}\ne \emptyset$
for all ${\bf x}\in \squ{X}$.

\subsubsection*{Examples}
The following are metric spaces which are WBGP:
\begin{enumerate}
\item
\label{item:example-WBGP1}
Proper geodesic spaces that are Gromov-hyperbolic (of infinite diameter).
\item
\label{item:example-WBGP2}
Teichm\"uller space equipped with the Teichm\"uller distance.
\item
\label{item:example-WBGP3}
The Cayley graphs for pairs $(G,S)$ of finitely generated infinite group $G$ and
a finite system $S$ of symmetric generators.
\end{enumerate}
Indeed,
\eqref{item:example-WBGP1} follows from the compactness of the Gromov's bordification
(compactification)
(e.g. Proposition 2.14 in \cite{KB}).
\eqref{item:example-WBGP2}
is proven at Proposition \ref{prop:T-space-WBGP}.

We check \eqref{item:example-WBGP3}.
Let $\Sigma(G,S)$ be the Caylay graph.
Let $F_{S}$ be the free group generated by $S$.
There is a canonical surjection $\pi\colon \Sigma(F_{S},S)\to \Sigma(G,S)$
induced by the quotient map $F_{S}\to G$.
Let ${\bf x}=\{x_{n}\}_{n\in \mathbb{N}}$ be an unbounded sequence in $\Sigma(G,S)$.
Take $y_{n}\in \Sigma(F_{S},S)$ such that $\pi(y_{n})=x_{n}$ and $d_{G}(id,x_{n})=d_{F_S}(id,y_{n})$.
Then ${\bf y}=\{y_{n}\}_{n\in \mathbb{N}}$ is an unbounded sequence in $G(F_{S},S)$,
and hence we can find a subsequence ${\bf y}'=\{y_{n_{j}}\}_{j}$ of ${\bf y}$
such that $\gromov{{\bf y}'}{{\bf y}'}{}=\infty$ from \eqref{item:example-WBGP1} above.
Since the projection $\pi$ is $1$-Lipschitz,
$d(x_{n_{j}},x_{n_{k}})\le d(y_{n_{j}},y_{n_{k}})$
and we have
$$
\gromov{\pi({\bf y}')}{\pi({\bf y}')}{id}\ge \gromov{{\bf y}'}{{\bf y}'}{id}=\infty.
$$
Thus $\pi({\bf y}')$ is a desired subsequence of ${\bf x}$.

\subsection{Properties}
We shall give a couple of properties of metric spaces which are WBGP.

\begin{proposition}
\label{subsec:stable_at_infinity_sequence}
Suppose a metric space $X$ is WBGP.
For any ${\bf x}\in \squ{X}$,
we have
$$
\visualindist({\bf x})=\cap_{{\bf x}'\in \sqt{{\bf x}}}\visualindist({\bf x}').
$$
\end{proposition}

\begin{proof}
From (\ref{item:BasicProperties5})  of Remark \ref{remark:accompanying_sequence},
we have
$$
\visualindist({\bf x})\subset \cap_{{\bf x}'\in \sqt{{\bf x}}}\visualindist({\bf x}').
$$
Let ${\bf z}\in \squ{X}-\visualindist({\bf x})$.
Then $\gromov{{\bf z}}{{\bf x}}{}<\infty$.
Since $X$ is WBGP,
we can take subsequences
${\bf z}'
\subset {\bf z}$
and
${\bf x}'\in \sqt{{\bf x}}$
such that
$\gromov{{\bf z}'}{{\bf x}'}{}<\infty$.
Hence ${\bf z}'\not\in \visualindist({\bf x}')$
and ${\bf z}\not\in \visualindist({\bf x}')$
from (\ref{item:BasicProperties4})  of Remark \ref{remark:accompanying_sequence}.
Therefore,
${\bf z}\not\in \cap_{{\bf x}'\in \sqt{{\bf x}}}\visualindist({\bf x}')$.
\end{proof}
 
\begin{proposition}[Relaxation of the definition]
\label{prop:relaxation-definition}
Let $X$ and $Y$ be metric spaces which are WBGP.
\begin{enumerate}
\item
A mapping $\omega\in Y^{X}$ is in $\AC^{w} (X,Y)$
if and only if
for any ${\bf x},{\bf z}\in \sq{X}$,
$\gromov{\omega({\bf x})}{\omega({\bf z})}{}=\infty$
whenever $\gromov{{\bf x}}{{\bf z}}{}=\infty$.
\item
A mapping $\omega\in Y^{X}$ is in $\AC (X,Y)$
if and only if
for any ${\bf x}, {\bf z}\in \sq{X}$,
$\gromov{\omega({\bf x})}{\omega({\bf z})}{}=\infty$
implies $\gromov{{\bf x}}{{\bf z}}{}=\infty$,
and vice versa.
\end{enumerate}
\end{proposition}

\begin{proof}
%
%
%
%
%
%
\medskip
\noindent
(1)\quad
The condition is paraphrased that $\omega(\visualindist({\bf x}))\subset \visualindist(\omega({\bf x}))$
for all ${\bf x}\in \sq{X}$.
Hence,
the ``only if'' part follows from the definition.
We show the ``if'' part.
Let ${\bf x}\in \squ{X}$.
Suppose to the contrary that there is ${\bf z}\in \visualindist({\bf x})$
such that $\omega({\bf z})\not\in \visualindist(\omega({\bf x}))$.
From Proposition \ref{subsec:stable_at_infinity_sequence},
there is ${\bf y}\in \sqt{\omega({\bf x})}$ such that
$\omega({\bf z})\not\in \visualindist({\bf y})$.
By taking subsequences ${\bf z}'\in \sqt{{\bf z}}$ and ${\bf x}'\in \sqt{{\bf x}}$ respectively,
we may assume that $\omega({\bf x}')\in \sqt{{\bf y}}\subset \sqt{\omega({\bf x})}$ and $\omega({\bf z}')\not\in \visualindist(\omega({\bf x}'))$.

On the other hand,
since ${\bf x}'\in \sqt{{\bf x}}\subset\sq{X}$,
from the condition (1),
we have
$\omega(\visualindist({\bf x}'))\subset \visualindist(\omega({\bf x}'))$.
Hence ${\bf z}'\not\in \visualindist({\bf x}')$,
which implies ${\bf z}'\not\in \visualindist({\bf x})$
because $\visualindist({\bf x})\subset \visualindist({\bf x}')$
from (\ref{item:BasicProperties5})
of Remark \ref{remark:accompanying_sequence}.
This contradicts to (\ref{item:BasicProperties4})
of Remark \ref{remark:accompanying_sequence}.
Thus, we conclude that
$\omega(\visualindist({\bf x}))\subset \visualindist(\omega({\bf x}))$.
%
%

\medskip
\noindent
(2)\quad
We only show the ``if'' part.
From (1) above,
$\omega\in \AC^{w}(X,Y)$.
Let ${\bf z}\in \squ{X}$ with $\omega({\bf z})\in \visualindist(\omega({\bf x}))$.
Suppose ${\bf z}\not\in \visualindist({\bf x})$.
From the argument in Proposition \ref{subsec:stable_at_infinity_sequence},
there is a subsequence ${\bf z}'\in \sqt{{\bf z}}$
with ${\bf z}'\not\in\visualindist({\bf x})$.
This means that
$\omega({\bf z}')\not\in\visualindist(\omega({\bf x}))$
from the assumption,
and hence $\omega({\bf z})\not\in \visualindist(\omega({\bf x}))$
from 
(\ref{item:BasicProperties4})
of Remark \ref{remark:accompanying_sequence}.
This is a contradiction.
\end{proof}

From Proposition \ref{prop:criterion_ACw-AC},
we conclude the following.

\begin{corollary}
Let $X$ is a metric space which is WBGP.
Let $M$ be a subclass of $X^{X}$ satisfying {\rm (S1)}
and {\rm (S3)}
in \S\ref{subsection:Criterion}.
Suppose that any $\omega\in M$ satisfies the condition
that $\gromov{\omega({\bf x})}{\omega({\bf z})}{}=\infty$
whenever $\gromov{{\bf x}}{{\bf z}}{}=\infty$
for all ${\bf x}$, ${\bf z}\in \sq{X}$.
Then,
$M\subset \ACINV(X)$ and the inclusion
$M\hookrightarrow \ACINV(X)$ induces a group homomorphism
$$
M/(\mbox{parallelism})\to \ACGroup (X).
$$
\end{corollary}

\section{Teichm\"uller theory}
\label{sec:teichmuller_theory}
In this section,
we recall basics in the Teichm\"uller theory.
For details,
the readers can refer to \cite{Ahlfors},
\cite{FLP},
\cite{IT}
and
\cite{Ivanov}.

\subsection{Teichm\"uller space}
\label{subsec:teichmuller-space}
The \emph{Teichm\"uller space} $\teich=\teich(S)$ of $S$
is the set of equivalence classes of marked Riemann surfaces
$(Y,f)$ where $Y$ is a Riemann surface of analytically finite type
and $f:{\rm Int}(S)\to Y$ is an orientation preserving homeomorphism.
Two marked Riemann surfaces $(Y_1,f_1)$
and $(Y_2,f_2)$ are \emph{Teichm\"uller equivalent}
if there is a conformal mapping $h:Y_1\to Y_2$
which is homotopic to $f_2\circ f_1^{-1}$.

Teichm\"uller space $\teich$ is topologized with a canonical complete distance,
called the \emph{Teichm\"uller distance $d_T$} (cf. \eqref{eq:Kerckhof_formula}).
It is known that the Teichm\"uller space $\teich=\teich(S)$
of $S$
is homeomorphic to $\mathbb{R}^{2\complexity{S}}$.

\begin{convention}
Throughout this paper,
we fix a conformal structure $X$ on $S$ and
consider $x_0=(X,id)$ as the base point of
the Teichm\"uller space $\teich$ of $S$.
\end{convention}

\subsection{Measured foliations}
\label{subsec:measured_foliations}
Let $\mathcal{S}$ be the set of homotopy classes of
non-trivial and non-peripheral simple closed curves on $S$.
Consider the set of weighted simple close curves
$\mathcal{WS}=\{t\alpha\mid t\ge 0,\alpha\in \mathcal{S}\}$,
where $t\alpha$ is the formal product between
$t\ge 0$ and $\alpha\in \mathcal{S}$.
We embed $\mathcal{WS}$ into the space
$\RR$
of non-negative functions on $\mathcal{S}$
by
\begin{equation}
\label{eq:embed_WS}
\mathcal{WS}\ni t\alpha\mapsto [\mathcal{S}\ni\beta
\mapsto t\,i(\alpha,\beta)]\in \RR
\end{equation}
where $i(\cdot,\cdot)$ is the geometric intersection number
on $\mathcal{S}$.
The closure $\mathcal{MF}$
of the image of the mapping \eqref{eq:embed_WS} is called
the \emph{space of measured foliations} on $S$.
The space $\RR$ admits
a canonical action of $\mathbb{R}_{> 0}$
by multiplication.
The quotient space
$$
\mathcal{PMF}=(\mathcal{MF}-\{0\})/\mathbb{R}_{>0}
\subset
\PR
=(\RR-\{0\})/\mathbb{R}_{>0}
$$
is said to be the \emph{space of projective measured foliations}.
By definition,
$\mathcal{MF}$ contains $\mathcal{WS}$ as a dense subset.
The intersection number function on $\mathcal{WS}$ defined by
$$
\mathcal{WS}\times \mathcal{WS}
\ni (t\alpha,s\beta)\mapsto ts\,i(\alpha,\beta)
$$
extends continuously on $\mathcal{MF}\times \mathcal{MF}$.
It is known that $\mathcal{MF}$ and $\mathcal{PMF}$
are homeomorphic to $\mathbb{R}^{2\complexity{S}}$
and $S^{2\complexity{S}-1}$,
respectively.

\medskip
\paragraph{{\bf Normal forms}}
Any $G\in \mathcal{MF}$ is represented by
a pair $(\mathcal{F}_G,\mu_G)$
of a singular foliation $\mathcal{F}_G$
and a transverse measure $\mu_G$ to $\mathcal{F}_G$.
The intersection number $i(G,\alpha)$ with $\alpha\in \mathcal{S}$
is obtained as
$$
i(G,\alpha)=\inf_{\alpha'\sim\alpha}\int_{\alpha'}d\mu_G.
$$
The \emph{support} $\supportMF{G}$ of a measured foliation $G$
is,
by definition,
the minimal essential subsurface containing the underlying foliation.
A measured foliation is said to be \emph{minimal}
if it intersects any 
curves in $\mathcal{S}$
in its support.

According to the structure of the underlying foliation,
any $G\in \mathcal{MF}$ has the \emph{normal form}:
Any measured foliation $G\in \mathcal{MF}$ is decomposed as
\begin{equation}\label{eq:decompositionMF}
G=G_1+G_2+\cdots+G_{m_1}+\beta_1+\cdots \beta_{m_2}
+\gamma_1+\cdots+\gamma_{m_3}.
\end{equation}
where $G_i$ is a minimal foliation in its support $X_i=\supportMF{G_{i}}$,
$\beta_j$ and $\gamma_k$ are simple closed curves
such that each $\beta_j$ cannot be deformed into any $X_i$
and $\gamma_k$ is homotopic to a component of $\partial X_i$
for some $i$
(cf. \S2.4 of \cite{Ivanov}).
In this paper,
we call $G_i$, $\beta_j$ and $\gamma_k$
a \emph{minimal component},
an \emph{essential curve},
and a \emph{peripheral curve} of $G$
respectively.

\subsection{Null sets of measured foliations}
\label{subsec:nullsets-measured-foliations}
For a measured foliation $G$,
we define
the \emph{null set} of $G$ by
\begin{equation}
\label{eq:nullset_pre}
\NMF(G)=\{F\in \mathcal{MF}\mid i(F,G)=0\}.
\end{equation}
We denote by $\trunc{G}$ the measured foliation
defined from $G$
by deleting the foliated annuli associated to the peripheral curves in $G$.
We here call $\trunc{G}$ the \emph{distinguished part of $G$ on nullity}.
Notice that $\trunc{(\trunc{G})}=\trunc{G}$.
The following might be well-known.
However,
we give a proof of the proposition for completeness.

\begin{proposition}[Null sets and Topologically equivalence]
\label{prop:measured_foliation_removing_peripehral}
Let $G,H\in \mathcal{MF}$.
Then, 
the following are equivalent.
\begin{itemize}
\item[{\rm (1)}]
$\NMF(G)=\NMF(H)$.
\item[{\rm (2)}]
$\trunc{G}$ is topologically equivalent to $\trunc{H}$.
\end{itemize}
In particular,
$\NMF(G)=\NMF(\trunc{G})$.
\end{proposition}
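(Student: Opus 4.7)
The plan is to first establish $\NMF(G) = \NMF(\trunc{G})$ so that peripheral components of the normal form \eqref{eq:decompositionMF} become invisible to the null set; once this reduction is in hand, the implication $(2) \Rightarrow (1)$ is immediate, and the substance of the proposition is the converse, where the topological data of $\trunc{G}$ must be recovered from $\NMF(G)$.

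For the reduction, I would decompose $G = \trunc{G} + \sum_k c_k \gamma_k$ with $c_k > 0$, so that
\[
i(F,G) = i(F,\trunc{G}) + \sum_k c_k\, i(F,\gamma_k).
\]
All terms are non-negative, giving $\NMF(G) \subset \NMF(\trunc{G})$ for free. For the reverse inclusion, I would fix $F$ with $i(F,\trunc{G})=0$ and a peripheral $\gamma_k$, which is homotopic to a boundary component $\partial_0 X_i$ of some arational support $X_i$. Since $i(F, G_i) = 0$ and $G_i$ is arational in $X_i$, a standard structure argument for measured foliations forces $F$ to split along $\partial X_i$ with its restriction on $X_i$ being either zero or a positive multiple of $G_i$, so $F$ carries no transverse measure across $\partial_0 X_i$ and $i(F,\gamma_k) = 0$. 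With the reduction proved, $(2) \Rightarrow (1)$ follows since topologically equivalent underlying foliations of $\trunc{G}$ and $\trunc{H}$ produce the same intersection pairing on $\mathcal{MF}$.

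For the main direction $(1) \Rightarrow (2)$, I may replace $G$ and $H$ by $\trunc{G}$ and $\trunc{H}$ and assume no peripheral components, writing normal forms
\[
G = \sum_i G_i + \sum_j c_j\beta_j, \qquad H = \sum_k H_k + \sum_l c'_l \delta_l.
\]
The strategy is to recover from $\NMF(G)$ the triple $(\{X_i\},\{[G_i]\},\{\beta_j\})$. First, $\alpha \in \mathcal{S}$ lies in $\NMF(G)$ iff $\alpha$ is isotopic into $S \setminus \bigl(\bigcup_i \mathrm{int}(X_i) \cup \bigcup_j \beta_j\bigr)$, or is parallel to some $\partial X_i$ or to some $\beta_j$, since arational $G_i$ intersects every other essential curve in $X_i$. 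The supports $X_i$ are then characterized as the maximal essential subsurfaces in which every null simple closed curve of $G$ is boundary-parallel, and the essential curve components $\beta_j$ are the remaining null simple closed curves not parallel to any $\partial X_i$. Once $X_i$ is fixed, the arational component $G_i$ is recovered up to positive scaling as the unique projective class of an arational foliation on $X_i$ having a representative in $\NMF(G)$; uniqueness rests on the standard fact that two distinct projective classes of arational foliations on the same subsurface have positive intersection. Applying this recovery to both $G$ and $H$ and using $\NMF(G) = \NMF(H)$, the data must match, giving topological equivalence of $\trunc{G}$ and $\trunc{H}$.

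The hardest step will be the canonical extraction of the supports $\{X_i\}$ from $\NMF(G) \cap \mathcal{S}$: one must carefully separate curves lying in the complement of $\bigcup_i X_i \cup \bigcup_j \beta_j$ from curves parallel to some $\partial X_i$, handle nested essential subsurfaces, and verify that the normal form \eqref{eq:decompositionMF} admits a canonical choice (e.g.\ via minimal essential supports) so that the recovered data are well-defined.
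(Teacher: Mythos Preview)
Your approach is workable in spirit but takes a longer road than the paper, and one step is incorrect as stated.

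The paper does not attempt to recover the canonical data $(\{X_i\},\{[G_i]\},\{\beta_j\})$ intrinsically from $\NMF(G)$. Instead, for $(1)\Rightarrow(2)$ it uses the single observation $i(G,H)=0$ (since $H\in\NMF(H)=\NMF(G)$) together with the Ivanov--Papadopoulos structure lemma: any $H$ with $i(G,H)=0$ decomposes as
\[
H=\sum_{i=1}^{m_1}H_i+\sum_{i=1}^{m_2}a_i\beta_i+\sum_{i}\sum_{\gamma\subset\partial X_i}b_\gamma\gamma+H_0,
\]
where each $H_i$ is either $0$ or topologically equivalent to $G_i$, and $H_0$ is supported in the complement of the support of $G$. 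Three short intersection-number arguments then eliminate the slack: if $H_0\neq 0$ one finds $\alpha$ with $i(G,\alpha)=0$ but $i(H,\alpha)\neq 0$; if some $a_i=0$ one finds $\alpha$ with $i(G,\alpha)\neq 0$ but $i(H,\alpha)=0$; similarly each $H_i\neq 0$. This gives $\trunc{G}\sim\trunc{H}$ directly, with no need to characterize the $X_i$ or $\beta_j$ from $\NMF(G)$ alone. The converse $(2)\Rightarrow(1)$ is handled by applying the same decomposition to an arbitrary $F\in\NMF(G)$. Your preliminary reduction $\NMF(G)=\NMF(\trunc{G})$ is correct and pleasant, but the paper never isolates it.

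Your characterization of the essential curves $\beta_j$ is wrong as written: you say they are ``the remaining null simple closed curves not parallel to any $\partial X_i$,'' but any simple closed curve lying in a non-pants component of $S\setminus(\bigcup_i X_i\cup\bigcup_j\beta_j)$ is also null and need not be parallel to any $\partial X_i$. A correct intrinsic characterization must separate the $\beta_j$ from such complement curves, for instance by the property that \emph{every} curve crossing $\beta_j$ lies outside $\NMF(G)$. The paper does carry out an analysis of this flavor later (the Decomposition Theorem in \S\ref{sec:Null_sets_section}), but in the harder setting of general boundary points; for the present proposition the structure-lemma route sidesteps the issue entirely.
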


We say that two measured foliations $F_1$ and $F_2$ are
{\color{black}\emph{topologically equivalent}} if
the underlying foliations of $F_1$ and $F_2$ are modified by Whitehead operations to
foliations with trivalent singularities
such that the resulting foliations (without transversal measures) are isotopic
(cf. \S3.1 of \cite{Ivanov2}).

\begin{proof}
Suppose (1) holds.
We decompose $G$ as \eqref{eq:decompositionMF}:
$$
G=\sum_{i=1}^{m_1}G_i+\sum_{i=1}^{m_2}\beta_i
+\sum_{i=1}^{m_3}\gamma_i
$$
Since $i(G,H)=0$,
the decomposition of $H$ is represented as 
\begin{equation}
\label{eq:decompostionH3}
H=\sum_{i=1}^{m_1}H_i+\sum_{i=1}^{m_2}a_i\beta_i
+\sum_{i=1}^{m_1}\sum_{\gamma\subset \partial X_i}b_\gamma\gamma
+H_0
\end{equation}
where
$H_i$ is either topologically equivalent to $G_i$
or is $0$,
$a_i,b_\gamma\ge 0$
and $\supportMF{H_{0}}\subset X-\supportMF{G}$.
In the summation $\sum_{\gamma\subset \partial X_i}$ in \eqref{eq:decompostionH3},
$\gamma$ rums over all component of $\partial X_i$.
See Proposition 3.2 of Ivanov \cite{Ivanov}
or Lemma 3.1 of Papadopoulos \cite{Papadopoulos}.
Indeed,
Ivanov in \cite{Ivanov} works
under the assumption that each $G_i$ is a stable lamination
for some pseudo-Anosov mapping on $X_i$.
However,
the discussion of his proof can be applied to our case.

If $H_0\ne 0$,
there is an $\alpha\subset \mathcal{S}$ with $i(G,\alpha)=0$
but $i(H_0,\alpha)\ne 0$.
Since $\alpha\in \NMF(G)=\NMF(H)$ from the assumption,
this is a contradiction.
Hence $H_0=0$.

Suppose $a_i=0$ for some $i$.
Since $\beta_i$ is essential,
we can find an $\alpha\in \mathcal{S}$ such that
$i(G,\alpha)=i(\beta_i,\alpha)\ne 0$.
Such an $\alpha$ satisfies $i(H,\alpha)=a_ii(\beta_i,\alpha)=0$,
which is a contradiction.
With the same argument,
we can see that $H_i\ne 0$.
Thus,
\begin{align*}
\trunc{G}
&=\sum_{i=1}^{m_1}G_i+\sum_{i=1}^{m_2}\beta_i \\
\trunc{H}
&=\sum_{i=1}^{m_1}H_i+\sum_{i=1}^{m_2}a_i\beta_i
\end{align*}
are topologically equivalent.

Suppose (2) holds.
Let $F\in \NMF(G)$.
Consider the decomposition \eqref{eq:decompostionH3}
for $F$ instead of $H$,
one can easily deduce that $F\in \NMF(H)$.
\end{proof}

\subsection{Extremal length}
\label{subsec:extremal_length}
Let $X$ be a Riemann surface and let $A$ be a doubly connected domain
on $X$.
If $A$ is conformally equivalent to a round annulus $\{1<|z|<R\}$,
we define the \emph{modulus} of $A$ by
$$
\Mod(A)=\frac{1}{2\pi}\log R.
$$
\emph{Extremal length} of
a simple closed curve $\alpha$
on $X$ is defined by
\begin{equation}
\label{eq:gemetric_definition_extremal_length}
\ext_X(\alpha)=\inf\left\{\frac{1}{\Mod(A)}\mid
\mbox{the core curve of $A\subset X$ is homotopic to $\alpha$}
\right\}
\end{equation}
In \cite{Ker},
Kerckhoff showed that 
if we define the extremal length of $t\alpha\in \mathcal{WS}$
by
$$
\ext_X(t\alpha)=t^2\ext_X(\alpha),
$$
then the extremal length function $\ext_X$ on $\mathcal{WS}$
extends continuously to $\mathcal{MF}$.
For $y=(Y,f)\in \teich$ and $G\in \mathcal{MF}$,
we define
$$
\ext_y(G)=\ext_Y(f(G)).
$$
We define the \emph{unit sphere} in $\mathcal{MF}$ by
$$
\ZZ=\{F\in \mathcal{MF}\mid \ext_{x_0}(F)=1\}.
$$
The projection $\mathcal{MF}-\{0\}\to \mathcal{PMF}$
induces a homeomorphism $\mathcal{MF}_1\to \mathcal{PMF}$.

It is known that for any $G\in \mathcal{MF}$
and $y=(Y,f)\in \teich$,
there is a unique holomorphic quadratic differential
$J_{G,y}$ such that
$$
i(G,\alpha)=\inf_{\alpha'\sim f(\alpha)}\int_{\alpha'}|{\rm Re}\sqrt{J_{G,y}}|.
$$
Namely,
the \emph{vertical foliation} of $J_{G,y}$ is equal to $G$.
We call $J_{G,y}$ the \emph{Hubbard-Masur differential}
for $G$ on $y$
(cf. \cite{HM}).
The Hubbard-Masur differential $J_{G,y}=J_{G,y}(z)dz^2$
for $G$ on $y=(Y,f)$ satisfies
$$
\ext_y(G)=\|J_{G,y}\|=\iint_Y|J_{G,y}(z)|dxdy.
$$
In particular,
it is known that
\begin{equation}
\label{eq:JS_extremallength_norm_length}
\ext_y(\alpha)=\|J_{\alpha,y}\|=\frac{\ell_{J_{G,y}}(\alpha)^2}{\|J_{\alpha,y}\|}
\end{equation}
where $\ell_{J_{G,y}}(\alpha)$ is the length of the geodesic representative
homotopic to $f(\alpha)$ with respect to the singular flat metric
$|J_{\alpha,y}|=|J_{\alpha,y}(z)||dz|^2$.

\medskip
\noindent
\paragraph{{\bf Kerckhoff's formula}}
The Teichm\"uller distance $d_T$ is expressed by extremal length,
which we call \emph{Kerckhoff's formula}:
\begin{equation}
\label{eq:Kerckhof_formula}
d_T(y_1,y_2)=\frac{1}{2}\log \sup_{\alpha\in \mathcal{S}}
\frac{\ext_{y_2}(\alpha)}{\ext_{y_1}(\alpha)}
\end{equation}
(see \cite{Ker}).
%

\medskip
\noindent
\paragraph{{\bf Minsky's inequality}}
Minsky \cite{Minsky2} observed the following inequality,
which we recently call \emph{Minsky's inequality}:
\begin{equation}
\label{eq:Minsky_inequality}
i(F,G)^2\le \ext_y(F)\,\ext_y(G)
\end{equation}
for $y\in \teich$ and $F,G\in \mathcal{MF}$.
Minsky's inequality is sharp in the sense that
for any $y\in \teich$ and $F\in \mathcal{MF}$,
there is a unique $G\in \mathcal{MF}$ up to multiplication by a positive constant such that
$i(F,G)^2=\ext_y(F)\,\ext_y(G)$
(cf. \cite{GM}).

\subsection{Teichm\"uller rays}
\label{subsec:Teichmuller_rays}
Let $x=(X,f)\in \teich$
and $[G]\in \mathcal{PMF}$.
By the Ahlfors-Bers theorem,
we can define an isometric embedding
$$
[0,\infty)\ni t\mapsto R_{G,x}(t)\in \teich
$$
with respect to the Teichm\"uller distance
by assigning the solution of the Beltrami equation
defined by the Teichm\"uller Beltrami differential
\begin{equation} \label{eq:beltrami}
\tanh(t)\frac{|J_{G,x}|}{J_{G,x}}
\end{equation}
for $t\ge 0$.
We call $R_{G,x}$ the \emph{Teichm\"uller (geodesic) ray}
associated to $[G]\in \mathcal{PMF}$.
Notice that the differential \eqref{eq:beltrami} depends only on the projective class of $G$.
The \emph{exponential map}
\begin{equation}
\label{eq:exponential_maps}
\mathcal{PMF}\times [0,\infty)/(\mathcal{PMF}\times \{0\})\ni ([G],t)
\mapsto R_{G,t}(t)\in \teich
\end{equation}
which is a homeomorphism
(see also \cite{IT}).

\section{Thurston theory with extremal length}
\label{sec:Thurston_theory_extremal_length}
In this section,
we recall the unification of extremal length geometry
via intersection number
developed in \cite{Mi5}.
%

\subsection{Gardiner-Masur closure}
\label{subsec:GM_closure}
Consider a mapping
\begin{align*}
\tilde{\Phi}_{GM}&\colon
\teich\ni y\mapsto
[\mathcal{S}\ni \alpha\mapsto
\ext_y(\alpha)^{1/2}]
\in \RR \\
\Psi_{GM}&\colon
\teich\ni y\mapsto
[\mathcal{S}\ni \alpha\mapsto
e^{-d_T(x_0,y)}\ext_y(\alpha)^{1/2}]
\in \RR.
\end{align*}
Let $\proj\colon \RR-\{0\}\to \PR$
be the quotient mapping of the action.
In \cite{GM},
Gardiner and Masur showed that the mapping
$$
\Phi_{GM}=\proj\circ \Psi_{GM}=\proj\circ \tilde{\Phi}_{GM}:\teich\to \PR
$$
is an embedding with compact closure.
The closure $\cl{\teich}$ of the image is called the \emph{Gardiner-Masur closure} or the \emph{Gardiner-Masur compactification},
and the complement $\partial_{GM}\teich=\cl{\teich}-\Phi_{GM}(\teich)$
is called the \emph{Gardiner-Masur boundary}.
They also observed that the space $\mathcal{PMF}$ of projective measured
foliaitons is contained in $\partial_{GM}\teich$.


\subsection{Cones, the intersection number and the Gromov product}
\label{subsec:cones_intersectionNumber}
We define
\begin{align*}
\GmInv
&=\proj^{-1}(\cl{\teich})\cup \{0\}\subset \RR \\
\GmTeich
&=\proj^{-1}(\Phi_{GM}(\teich))\subset \RR \\
\Gmbdy
&=\proj^{-1}(\partial_{GM}\teich)\cup \{0\}\subset \RR.
\end{align*}
Since $\mathcal{PMF}\subset \partial_{GM}\teich$,
$\mathcal{MF}\subset \Gmbdy\subset \GmInv$.
From Proposition 1 of \cite{Mi5},
$\Psi_{GM}\colon \teich \to \GmInv$ extends to an injective continuous mapping on $\cl{\teich}$.
\begin{convention}
We denote by $[\mathfrak{a}]\in \cl{\teich}$ the projective class of
$\mathfrak{a}\in \GmInv$.
Unless otherwise stated,
we always identify $y\in \teich$ with the projective class $\Phi_{GM}(y)
=[\tilde{\Phi}_{GM}(y)]=[\Psi_{GM}(y)]$.
\end{convention}

In \cite{Mi5},
the author established the following \emph{unification}
of extremal length geometry via the intersection number.

\begin{theorem}[Theorem 1.1 in \cite{Mi5}]
\label{thm:main_realization}
Let $x_0\in \teich$ be the base point taken as above.
There is a unique continuous function
$$
i(\,\cdot\,,\,\cdot\,)\colon \GmInv\times \GmInv\to \mathbb{R}
$$
with the following properties.
\begin{itemize}
\item[{\rm (i)}]
$i(\tilde{\Phi}_{GM}(y),F)=\ext_y(F)^{1/2}$
for any
$y\in \teich$ and $F\in \mathcal{MF}$.
\item[{\rm (ii)}]
For $\mathfrak{a},\mathfrak{b}\in \GmInv$,
$i(\mathfrak{a},\mathfrak{b})=i(\mathfrak{b},\mathfrak{a})$.
\item[{\rm (iii)}]
For $\mathfrak{a},\mathfrak{b}\in \GmInv$ and $t,s\ge 0$,
$i(t\mathfrak{a},s\mathfrak{b})=ts\,i(\mathfrak{a},\mathfrak{b})$.
\item[{\rm (iv)}]
For any $y,z\in \teich$,
\begin{align*}
i(\tilde{\Phi}_{GM}(y),\tilde{\Phi}_{GM}(z)) &=\exp(d_T(y,z)) \\
i(\Psi_{GM}(y),\Psi_{GM}(z)) &=\exp(-2\gromov{y}{z}{x_0}).
\end{align*}
\item[{\rm (v)}]
For $F,G\in \mathcal{MF}\subset \GmInv$,
the value $i(F,G)$ is equal to the geometric intersection number
$I(F,G)$
between $F$ and $G$.
\end{itemize}
\end{theorem}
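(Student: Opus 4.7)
The plan is to build $i$ by extending outward in two stages. First, I would pin down $i$ on the mixed product $\GmInv \times \mathcal{MF}$: every $\mathfrak{a} \in \GmInv - \{0\}$ admits a unique representation $\mathfrak{a} = t\cdot \Psi_{GM}(p)$ with $t > 0$ and $p \in \cl{\teich}$, so property (i) forces the definition $i(\mathfrak{a}, F) = t\cdot \mathcal{E}_p(F)$. The characterization of $\mathcal{E}_p$ recalled in \S\ref{subsec:GM_closure}---especially the uniform convergence $\mathcal{E}_{y_n} \to \mathcal{E}_p$ on compact subsets of $\mathcal{MF}$ as $y_n \to p$ in $\cl{\teich}$---makes this mixed pairing well-defined, bi-homogeneous, and jointly continuous on $\GmInv \times \mathcal{MF}$. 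Compatibility with property (v) on $\mathcal{MF} \times \mathcal{MF}$ holds through the inclusion $\mathcal{MF} \subset \Gmbdy$ and the identification of $\mathcal{E}_F$ for $F \in \mathcal{MF}$ with the intersection pairing $I(F, \cdot)$ up to normalization.

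The second stage is to extend $i$ to $\GmInv \times \GmInv$. The values on the dense subset $\tilde{\Phi}_{GM}(\teich) \times \tilde{\Phi}_{GM}(\teich)$ are already prescribed by property (iv) as $e^{d_T(y,z)}$, which by Kerckhoff's formula \eqref{eq:Kerckhof_formula} equals $\sup_{F \in \mathcal{MF}_1} (\ext_z(F)/\ext_y(F))^{1/2}$. I would rewrite this as a \emph{symmetric} sup over the compact set $\mathcal{MF}_1 \cong \mathcal{PMF}$, using the sharpness of Minsky's inequality \eqref{eq:Minsky_inequality} (the sharp foliation is the vertical foliation of the Hubbard-Masur differential along the Teichm\"uller geodesic from $y$ to $z$) to replace the ratio by a symmetric expression built entirely from the Stage-1 mixed pairing. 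The resulting formula then makes sense for arbitrary $\mathfrak{a}, \mathfrak{b} \in \GmInv$ and inherits joint continuity on $\GmInv \times \GmInv$ from the Stage-1 pairing together with compactness of $\mathcal{MF}_1$.

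Symmetry (ii) and bi-homogeneity (iii) are built in by construction; (i) holds by design, and (iv), (v) are verified on the dense subsets $\teich \times \teich$ and $\mathcal{MF} \times \mathcal{MF}$ respectively and then propagate by continuity. Uniqueness follows from these prescriptions together with joint continuity: positive multiples of $\Psi_{GM}(\teich)$ are dense in $\GmInv$ by the pointwise convergence property of $\mathcal{E}_{y_n} \to \mathcal{E}_p$, so any two continuous pairings satisfying (i), (iii), (iv), (v) must agree on the dense set prescribed by (iv) and hence everywhere.

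The main obstacle is pinning down the correct symmetric rewriting of Kerckhoff's sup in Stage 2 and verifying joint continuity at boundary pairs $(\mathfrak{a}, \mathfrak{b}) \in \Gmbdy \times \Gmbdy$, where individual extremal lengths along approximating sequences $y_n \to p$, $z_n \to q$ blow up while the normalized functions $\mathcal{E}_{y_n}$ remain bounded. Controlling the location of the maximizer in $\mathcal{MF}_1$ under simultaneous degeneration of both arguments is where the interplay between the sharp form of Minsky's inequality, the Hubbard-Masur parametrization \eqref{eq:JS_extremallength_norm_length} of quadratic differentials, and the density of $\teich$ in $\cl{\teich}$ has to do the real work.
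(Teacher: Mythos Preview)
This theorem is not proved in the present paper: it is quoted verbatim as Theorem~1.1 of \cite{Mi5}, and the paper simply uses it as a black box throughout \S\ref{sec:Thurston_theory_extremal_length} onward. There is therefore no proof here to compare your proposal against.

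That said, your outline is a plausible reconstruction of how such a result would be established. Stage~1 is essentially forced, as you note, by the normalization properties of $\mathcal{E}_p$ from \S\ref{subsec:GM_closure}. The real content lies in Stage~2, and here your proposal is honest about where the difficulty sits but stops short of resolving it: you say you would ``rewrite'' Kerckhoff's supremum as a symmetric expression in the Stage-1 pairing, but you do not actually exhibit the formula, and the sharpness of Minsky's inequality alone does not obviously hand you a closed-form symmetric expression that extends continuously to $\Gmbdy\times\Gmbdy$. The issue you flag at the end---controlling the maximizer in $\mathcal{MF}_1$ under simultaneous degeneration of both arguments---is exactly the heart of the matter, and your sketch does not indicate a mechanism for doing this. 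So the proposal correctly identifies the architecture and the obstacle, but the obstacle is not overcome; as written it is a roadmap rather than a proof. If you want to carry this out you would need to consult \cite{Mi5} directly for the specific symmetric formula and the continuity argument at the boundary.
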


We define
the \emph{extremal length} of $\mathfrak{a}\in \GmInv$ on $y\in \teich$ by
\begin{equation}
\label{eq:extremallength_on_GMInv}
\ext_y(\mathfrak{a})=\sup_{F\in \mathcal{MF}-\{0\}}\frac{i(\mathfrak{a},F)^2}{\ext_y(F)}
\end{equation}
(cf. Corollary 4 in \cite{Mi5}).
One see that
\begin{equation}
\label{eq:extremallength_on_GMInv-1}
e^{-2d_{T}(x,y)}\ext_x(\mathfrak{a})
\le 
\ext_y(\mathfrak{a})
\le
e^{2d_{T}(x,y)}\ext_x(\mathfrak{a})
\end{equation}
(cf. (5.6) in \cite{Mi5}).
From \eqref{eq:Minsky_inequality}
and Gardiner-Masur's work in \cite{GM},
\eqref{eq:extremallength_on_GMInv} coincides with
the original extremal length when $\mathfrak{a}\in \mathcal{MF}$.
$\ext_y$ is continuous on $\GmInv$ and
satsfies
\begin{align}
e^{-d_T(x_0,y)}\ext_y(\Psi_{GM}(z))^{1/2}
&=\exp(-2\gromov{y}{z}{x_0})
=i(\Psi_{GM}(y),\Psi_{GM}(z)), 
\label{eq:intesection-number-1}\\
e^{-d_T(x_0,y)}\ext_y(\mathfrak{a})^{1/2}
&=i(\Psi_{GM}(y),\mathfrak{a})
\label{eq:intesection-number-2}
\end{align}
for $y,z\in \teich$
and $\mathfrak{a}\in \GmInv$
(cf. Theorem 4 and Proposition 7 in \cite{Mi5}).
The extremal length \eqref{eq:extremallength_on_GMInv}
also satisfies the following \emph{generalized Minsky inequality}:
\begin{equation}
\label{eq:Minsky_inequality_in_general}
i(\mathfrak{a},\mathfrak{b})^2\le
\ext_y(\mathfrak{a})\,\ext_y(\mathfrak{b})
\end{equation}
for all $y\in \teich$ and $\mathfrak{a},\mathfrak{b}\in \GmInv$
(cf. Corollary 3 in \cite{Mi5}).

\subsection{Intersection number with base point}
\label{subsec:Intersection_number_with_base_point}
We define
the \emph{intersection number with base point $x_0$}
by
\begin{equation}
\label{eq:weighted_intersection_number}
i_{x_0}(p,q)=i(\Psi_{GM}(p),\Psi_{GM}(q))
\end{equation}
for $p,q\in \cl{\teich}$ (cf. \S8.2 in \cite{Mi5}).
Since the intersection number is continuous,
so is $i_{x_0}$ on
the product $\cl{\teich}\times \cl{\teich}$.
From Theorem \ref{thm:main_realization},
the Gromov product
\begin{equation}
\label{eq:Gromov_product_boundary_continuous}
\gromov{y}{z}{x_0}=-\frac{1}{2}\log i_{x_0}(y,z)
\end{equation}
extends continuously
to $\cl{\teich}\times \cl{\teich}$
with values in the closed interval  $[0,\infty]$
(cf. Corollary 1 in \cite{Mi5}).


\begin{proposition}
\label{prop:T-space-WBGP}
Teichm\"uller space $(\teich,d_{T})$ is WBGP.
\end{proposition}

\begin{proof}
Let ${\bf x}=\{x_{n}\}_{n\in \mathbb{N}}\in \squ{\teich}$.
Since the Gardiner-Masur closure is compact,
we find a subsequence ${\bf x}'=\{x_{n(k)}\}_{k\in \mathbb{N}}$
and $p\in \partial_{GM}\teich$ such that $x_{n(k)}\to p$ as $k\to \infty$.
Since
$$
i_{x_{0}}(x_{n(k)},x_{n(l)})\to i_{x_{0}}(p,p)=0\quad (k,l\to \infty),
$$
we have ${\bf x}'\in \sq{\teich}$ from \eqref{eq:Gromov_product_boundary_continuous}.
\end{proof}

\begin{proposition}[Intersection number with base point]
\label{prop:IntersectionNumberBasePoint}
For any $[\mathfrak{a}],[\mathfrak{b}]\in \cl{\teich}$,
it holds
\begin{equation}
\label{eq:intersection_number_MF_GM}
i_{x_{0}}([\mathfrak{a}],[\mathfrak{b}])=
\frac{i(\mathfrak{a},\mathfrak{b})}{\ext_{x_{0}}(\mathfrak{a})^{1/2}\ext_{x_{0}}(\mathfrak{b})^{1/2}}.
\end{equation}
\end{proposition}

Notice that the intersection number in the right-hand side
of \eqref{eq:intersection_number_MF_GM} is the original intersection number on 
$\mathcal{MF}$
(cf. (v) of Theorem \ref{thm:main_realization}).

\begin{proof}[Proof of Proposition \ref{prop:IntersectionNumberBasePoint}]
Let $y\in \teich$.
Notice that
$$
\ext_{x_{0}}(\Psi_{GM}(y))=\exp(-2\gromov{x_{0}}{y}{x_{0}})=1.
$$
Since $\ext_{x_{0}}$ is continuous on $\GmInv$,
we have
\begin{equation}
\label{eq:image-psi-a}
\Psi_{GM}([\mathfrak{a}])=\frac{\mathfrak{a}}{\ext_{x_{0}}(\mathfrak{a})^{1/2}}.
\end{equation}
Therefore,
$$
i_{x_{0}}([\mathfrak{a}],[\mathfrak{b}])
=
i(\Psi_{GM}([\mathfrak{a}]),\Psi_{GM}([\mathfrak{b}])) \\
=\frac{i(\mathfrak{a},\mathfrak{b})}{\ext_{x_{0}}(\mathfrak{a})^{1/2}\ext_{x_{0}}(\mathfrak{b})^{1/2}}
$$
for $\mathfrak{a},\mathfrak{b}\in \GmInv$.
\end{proof}


\subsection{A short proof for non-Gromov hyperbolicity}
\label{sec:Remark_visually_indistinguishable}
We check that the relation
``visually indistinguishable" is not an equivalence relation
on $\sq{\teich}$ when $\complexity{S}\ge 2$.
This also implies that Teichm\"uller space $(\teich,d_T)$ is not Gromov hyperbolic.

Indeed,
let $\alpha,\beta,\gamma\in \mathcal{S}$ with $i(\alpha,\beta)=i(\alpha,\gamma)=0$,
but $i(\beta,\gamma)\ne 0$.
Consider sequences ${\bf x}=\{x_n\}_{n\in \mathbb{N}}$,
${\bf y}=\{y_n\}_{n\in \mathbb{N}}$ and
${\bf z}=\{z_n\}_{n\in \mathbb{N}}$
in $\teich$ with $x_n\to [\alpha]$,
$y_n\to [\beta]$ and $z_n\to [\gamma]$
in $\cl{\teich}$,
where the projective classes $[\alpha]$,
$[\beta]$ and
$[\gamma]$
are recognized as points in $\partial_{GM}\teich$.
Then,
\begin{align*}
i_{x_0}(x_n,y_n)
&\to i_{x_0}([\alpha], [\beta])=0 \\
i_{x_0}(x_n,z_n)
&\to i_{x_0}([\alpha], [\gamma])=0,
\end{align*}
but
$$
i_{x_0}(y_n,z_n)\to i_{x_0}([\beta], [\gamma])\ne 0.
$$
From \eqref{eq:Gromov_product_boundary_continuous},
these observations imply that ${\bf y},{\bf z}\in \visualindist({\bf x})$
but ${\bf y}\not\in \visualindist({\bf z})$.

\subsection{Subadditivity of the intersection number}
The intersection number has the following
\emph{subadditive property}.

\begin{lemma}[Subadditivity]
\label{lem:subadditivity}
Let $F,G\in \mathcal{MF}\subset \GmInv$ with $i(F,G)=0$.
Then,
for any $\mathfrak{a}\in \GmInv$
we have
\begin{equation}
\label{eq:subadditivity}
( i(\mathfrak{a},F)^2+i(\mathfrak{a},G)^2)^{1/2}
 \le i(\mathfrak{a},F+G)\le i(\mathfrak{a},F)+i(\mathfrak{a},G).
\end{equation}
\end{lemma}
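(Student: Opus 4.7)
The plan is to reduce the general case to a pair of extremal-length inequalities on $\teich$ via density and continuity, and then to prove the reduced inequalities by testing against explicit conformal metrics.

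For the reduction: since $\Phi_{GM}(\teich)$ is dense in $\cl{\teich}$, any $\mathfrak{a}\in\GmInv\setminus\{0\}$ is the limit in $\RR$ of a sequence of the form $t_n\tilde{\Phi}_{GM}(y_n)$ with $y_n\in\teich$ and $t_n>0$. By joint continuity of $i(\cdot,\cdot)$ on $\GmInv\times\GmInv$, both sides of \eqref{eq:subadditivity} pass to the limit, and by homogeneity (property (iii) of Theorem~\ref{thm:main_realization}) the scalars $t_n$ factor out. Together with property (i), which gives $i(\tilde{\Phi}_{GM}(y),H)=\ext_y(H)^{1/2}$, the lemma reduces to
\begin{equation*}
\ext_y(F)+\ext_y(G)\,\le\,\ext_y(F+G)\,\le\,\bigl(\ext_y(F)^{1/2}+\ext_y(G)^{1/2}\bigr)^{2}
\end{equation*}
for every $y\in\teich$ and every $F,G\in\mathcal{MF}$ with $i(F,G)=0$.

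For the upper bound I would use the analytic definition $\ext_y(H)^{1/2}=\sup_{\rho}\ell_{\rho}(H)/\mathrm{area}(\rho)^{1/2}$, where $\rho$ ranges over Borel conformal metrics on $y$ of positive area. Since $i(F,G)=0$ lets $F$ and $G$ be jointly approximated by weighted systems of mutually disjoint simple closed curves, one has $\ell_\rho(F+G)=\ell_\rho(F)+\ell_\rho(G)$ for every such $\rho$; subadditivity of the supremum then delivers $\ext_y(F+G)^{1/2}\le\ext_y(F)^{1/2}+\ext_y(G)^{1/2}$.

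For the lower bound I would test against the metric $\rho=|J_{F,y}|^{1/2}+|J_{G,y}|^{1/2}$ built from the Hubbard--Masur differentials. Since $\rho\ge|J_{F,y}|^{1/2}$ pointwise and $\ell_{|J_{F,y}|^{1/2}}(F)=\|J_{F,y}\|=\ext_y(F)$ by extremality of the Hubbard--Masur metric, one has $\ell_\rho(F+G)\ge\ell_\rho(F)+\ell_\rho(G)\ge\ext_y(F)+\ext_y(G)$. Expanding gives $\mathrm{area}(\rho)=\ext_y(F)+\ext_y(G)+2\iint|J_{F,y}|^{1/2}|J_{G,y}|^{1/2}\,dxdy$; provided the cross term vanishes, the defining inequality $\ext_y(F+G)\ge\ell_\rho(F+G)^{2}/\mathrm{area}(\rho)$ yields $\ext_y(F+G)\ge\ext_y(F)+\ext_y(G)$. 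The main obstacle is that when the normal-form decompositions (\S\ref{subsec:measured_foliations}) of $F$ and $G$ share peripheral curves, the supports of $J_{F,y}$ and $J_{G,y}$ overlap along the corresponding foliated annuli and the cross term need not literally vanish; handling this requires a limiting argument separating the shared annuli, with Proposition~\ref{prop:measured_foliation_removing_peripehral} ensuring that $i(F,G)=0$ is preserved under the perturbation so that the residual cross-term contribution to $\mathrm{area}(\rho)$ can be made arbitrarily small.
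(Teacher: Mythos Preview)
Your reduction to the extremal-length inequalities on $\teich$ and your upper-bound argument are fine; the paper does the same reduction and obtains the upper bound by the dual formula $\ext_y(H)^{1/2}=\sup_{H'\ne 0} i(H,H')/\ext_y(H')^{1/2}$ together with $i(H',F+G)=i(H',F)+i(H',G)$, which is the same mechanism as your metric-length version.

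The lower bound has a genuine gap, and you have misdiagnosed the obstacle. The Hubbard--Masur differential $J_{F,y}$ is a \emph{holomorphic} quadratic differential on all of $Y$; by the identity theorem it is nonzero off a discrete set, regardless of the topological support of $F$. So even when $F$ and $G$ are, say, two disjoint simple closed curves with no shared components, the product $|J_{F,y}|^{1/2}|J_{G,y}|^{1/2}$ is positive almost everywhere and the cross term $2\iint |J_{F,y}|^{1/2}|J_{G,y}|^{1/2}$ is bounded away from zero. No perturbation of the peripheral curves will make it small, and your proposed limiting argument cannot close. (A simple repair is to take $\rho=(|J_{F,y}|+|J_{G,y}|)^{1/2}$ instead: then $\mathrm{area}(\rho)=\ext_y(F)+\ext_y(G)$ exactly, while $\rho\ge |J_{F,y}|^{1/2}$ and $\rho\ge |J_{G,y}|^{1/2}$ still give $\ell_\rho(F+G)\ge \ext_y(F)+\ext_y(G)$.)

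The paper's route for the lower bound is different: it first treats rational $F,G$ by working with the single Jenkins--Strebel differential $J_{F+G,y}$, decomposing $\|J_{F+G,y}\|$ as a sum over its characteristic annuli, grouping the annuli according to whether their cores come from $F$ or $G$, and invoking the $L^1$-extremality of Jenkins--Strebel differentials (Strebel, Theorem~20.5) to bound each group below by $\ext_y(F)$ and $\ext_y(G)$. The general case then follows by density of rational foliations.
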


\begin{proof}
Let $y\in \teich$.
Then,
we have
\begin{align*}
i(\tilde{\Phi}_{GM}(y),F+G)
&=
\ext_y(F+G)^{1/2}\\
&=\sup_{H\in \mathcal{MF}-\{0\}}\frac{i(H,F+G)}{\ext_y(H)^{1/2}}
=\sup_{H\in \mathcal{MF}-\{0\}}\frac{i(H,F)+i(H,G)}{\ext_y(H)^{1/2}} \\
&\le \sup_{H\in \mathcal{MF}-\{0\}}\frac{i(H,F)}{\ext_y(H)^{1/2}}
+\sup_{H\in \mathcal{MF}-\{0\}}\frac{i(H,G)}{\ext_y(H)^{1/2}} \\
&=\ext_y(F)^{1/2}+\ext_y(G)^{1/2}
=i(\tilde{\Phi}_{GM}(y),F)+i(\tilde{\Phi}_{GM}(y),G).
\end{align*}
Hence,
the right-hand side of \eqref{eq:subadditivity}
follows from the density of $\GmTeich$ in $\GmInv$.

We prove the left-hand side of \eqref{eq:subadditivity}.
We first show the case where $F$ and $G$ are rational.
Let $F=\sum_{i=1}^{N_1}t_i\alpha_i+\sum_{j=1}^{N_3}
u_j\gamma_j$ and
$G=\sum_{i=1}^{N_2}s_i\beta_i+\sum_{j=1}^{N_3}v_j\gamma_j$
where $\alpha_i,\beta_i,\gamma_j$ are mutually disjoint and distinct simple closed curves
and $t_i,s_i>0$ and $u_j,v_j\ge 0$.
Let $y\in \teich$ and $A_{\alpha_i}$,
$A_{\beta_i}$ $A_{\gamma_j}$ be the characteristic annuli
for $\alpha_i$,
$\beta_i$ and $\gamma_j$ of $J_{F+G,y}$
(cf. \cite{Strebel}).
From \eqref{eq:gemetric_definition_extremal_length}
and Theorem 20.5 in \cite{Strebel},
we have
\begin{align}
i(\tilde{\Phi}_{GM}(y),F+G)^2&=
\ext_y(F+G)=\|J_{F+G,y}\| \\
&=\sum_{i=1}^{N_1}\frac{t_i^2}{\Mod(A_{\alpha_i})}
+\sum_{i=1}^{N_2}\frac{s_i^2}{\Mod(A_{\beta_i})}+
\sum_{j=1}^{N_3}\frac{(u_j+v_j)^2}{\Mod(A_{\gamma_j})} 
\nonumber \\
&\ge 
\left(
\sum_{i=1}^{N_1}\frac{t_i^2}{\Mod(A_{\alpha_i})}
+
\sum_{j=1}^{N_3}\frac{u_j}{\Mod(A_{\gamma_j})}
\right)
\nonumber \\
&\quad
+
\left(
\sum_{i=1}^{N_2}\frac{s_i^2}{\Mod(A_{\beta_i})}+
\sum_{j=1}^{N_3}\frac{v_j^2}{\Mod(A_{\gamma_j})}\right)
\nonumber \\
&\ge
\|J_{F,y}\|+\|J_{G,y}\| 
=\ext_y(F)+\ext_y(G)
\nonumber
\label{eq:extremal_strebel}\\
&=i(\tilde{\Phi}_{GM}(y),F)^2+i(\tilde{\Phi}_{GM}(y),G)^2.
\nonumber
\end{align}
Since $\GmTeich$ is dense in $\GmInv$,
the above calculation implies
$$
i(\mathfrak{a},F)^2+
i(\mathfrak{a},G)^2
\le
i(\mathfrak{a},F+G)^2
$$
for all $\mathfrak{a}\in \GmInv$.
Hence,
the left-hand side of \eqref{eq:subadditivity}
also follows by approximating arrational components by
weighted multicurves
(cf. Theorem C of \cite{LM}).
\end{proof}

\section{Structure of the null sets}
\label{sec:Null_sets_section}
We define the \emph{null set} for $\mathfrak{a}\in\GmInv$ by
$$
\mathcal{N}(\mathfrak{a})=\{\mathfrak{b}\in \GmInv\mid
i(\mathfrak{a},\mathfrak{b})=0\}.
$$
This section is devoted to show the following theorem.
\begin{theorem}[Structure of the null set]
\label{thm:null_set}
For any $\mathfrak{a}\in \Gmbdy-\{0\}$,
any associated foliation $[G]\in \mathcal{PMF}$ for $\mathfrak{a}$
satisfies
$\mathcal{N}(\mathfrak{a})=\mathcal{N}(G)=\mathcal{N}(\trunc{G})$.
\end{theorem}
The associated foliation for $\mathfrak{a}$ in Theorem \ref{thm:null_set}
is defined in \S\ref{subset:associated_foliation}. 
We will see that
the associated foliations for $\mathfrak{a}$
are essentially uniquely determined from $\mathfrak{a}$
(cf. Theorem \ref{thm:uniqueness_associate_foliations}). 
The following is known
(cf. Proposition 9.1 in \cite{Mi5}).

\begin{proposition}
\label{prop:translation_null_space1}
For $\mathfrak{a}\in \GmInv-\{0\}$,
$\mathcal{N}(\mathfrak{a})\ne \{0\}$
if and only if $\mathfrak{a}\in \Gmbdy$.
In any case,
we have
$\mathcal{N}(\mathfrak{a})\subset \Gmbdy$,
and $\mathfrak{a}\in \mathcal{N}(\mathfrak{a})$
if $\mathfrak{a}\in \Gmbdy$.
\end{proposition}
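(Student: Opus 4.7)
The plan is to address the three assertions of the proposition separately: (a) $\mathcal{N}(\mathfrak{a})\subset\Gmbdy$ holds for every $\mathfrak{a}\in\GmInv-\{0\}$; (b) if $\mathfrak{a}\in\Gmbdy-\{0\}$ then $\mathcal{N}(\mathfrak{a})\ne\{0\}$; and (c) if $\mathcal{N}(\mathfrak{a})\ne\{0\}$ then $\mathfrak{a}\in\Gmbdy$. Assertion (c) will drop out of (a) by the symmetry of $i$: given $\mathfrak{b}\in\mathcal{N}(\mathfrak{a})-\{0\}$, Theorem \ref{thm:main_realization}(ii) yields $\mathfrak{a}\in\mathcal{N}(\mathfrak{b})$, and then (a) applied to $\mathfrak{b}$ forces $\mathfrak{a}\in\Gmbdy$. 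So only (a) and (b) require real work.

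For (a), I would argue by contradiction. Suppose some $\mathfrak{b}\in\mathcal{N}(\mathfrak{a})$ is nonzero and lies in $\GmTeich$, and write $\mathfrak{a}=s\,\Psi_{GM}(p)$ with $p\in\cl{\teich}$ and $\mathfrak{b}=u\,\Psi_{GM}(y)$ with $y\in\teich$, $s,u>0$. By (iii) of Theorem \ref{thm:main_realization}, property (iv) for pairs in $\teich\times\teich$, and the boundary continuity of the Gromov product expressed in \eqref{eq:Gromov_product_boundary_continuous}, the intersection number becomes
\begin{equation*}
i(\mathfrak{a},\mathfrak{b})=us\cdot i(\Psi_{GM}(p),\Psi_{GM}(y))=us\exp(-2\gromov{p}{y}{x_0}).
\end{equation*}
Passing \eqref{eq:Gromov_product_1} to the limit along $z\to p$ gives $\gromov{p}{y}{x_0}\le d_T(x_0,y)<\infty$, so the right-hand side is strictly positive, contradicting $i(\mathfrak{a},\mathfrak{b})=0$.

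For (b), I would write $\mathfrak{a}=s\,\Psi_{GM}(p)$ with $p\in\partial_{GM}\teich$ and pick a sequence $y_n\in\teich$ with $y_n\to p$ in $\cl{\teich}$; necessarily $d_T(x_0,y_n)\to\infty$. Applying Kerckhoff's formula \eqref{eq:Kerckhof_formula} with the roles of $x_0$ and $y_n$ swapped, I extract $G_n\in\mathcal{MF}$ realizing the supremum, normalized so $\ext_{x_0}(G_n)=1$, i.e.\ $G_n\in\ZZ$; then $\ext_{y_n}(G_n)=e^{-2d_T(x_0,y_n)}$, and
\begin{equation*}
\mathcal{E}_{y_n}(G_n)=e^{-d_T(x_0,y_n)}\ext_{y_n}(G_n)^{1/2}=e^{-2d_T(x_0,y_n)}\longrightarrow 0.
\end{equation*}
Compactness of $\ZZ$ (which is homeomorphic to $\mathcal{PMF}$) lets me pass to a subsequence with $G_n\to G\in\ZZ$, so in particular $G\ne 0$. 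The uniform convergence $\mathcal{E}_{y_n}\to\mathcal{E}_p$ on compact subsets of $\mathcal{MF}$ (property (3) from \S\ref{subsec:GM_closure}) combined with continuity of $\mathcal{E}_p$ then yields $\mathcal{E}_p(G)=\lim_n\mathcal{E}_{y_n}(G_n)=0$, that is, $i(\Psi_{GM}(p),G)=0$, so $G\in\mathcal{N}(\mathfrak{a})-\{0\}$ as desired.

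The step I expect to be the main obstacle is the limit transfer in (b): checking that Kerckhoff's extremal foliations $G_n$ can indeed be chosen inside $\ZZ$, that a subsequential limit $G\in\ZZ$ exists, and that one may then swap the diagonal limit $\mathcal{E}_{y_n}(G_n)\to\mathcal{E}_p(G)$ through the uniform-on-compacta convergence of $\mathcal{E}_{y_n}$. Each ingredient is stated in \S\ref{subsec:extremal_length} and \S\ref{subsec:GM_closure}, so the proof reduces to assembling them carefully.
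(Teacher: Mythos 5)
The paper does not prove this proposition: it is imported verbatim as Proposition 9.1 of \cite{Mi5} (``The following is known''), so there is no in-paper argument to compare yours against. Judged on its own, your proof is correct and assembles exactly the right ingredients from the facts the paper does record. The reduction of the ``only if'' direction to the containment $\mathcal{N}(\mathfrak{a})\subset\Gmbdy$ via symmetry of $i$ is clean; the containment itself follows, as you say, from $i(\Psi_{GM}(p),\Psi_{GM}(y))=\exp(-2\gromov{p}{y}{x_0})$ together with the bound $\gromov{p}{y}{x_0}\le d_T(x_0,y)$ obtained by passing \eqref{eq:Gromov_product_1} to the limit through the boundary-continuity statement \eqref{eq:Gromov_product_boundary_continuous}; and the ``if'' direction via Kerckhoff extremal foliations $G_n\in\ZZ$ with $\mathcal{E}_{y_n}(G_n)=e^{-2d_T(x_0,y_n)}\to 0$, followed by the diagonal limit through uniform convergence on the compact set $\ZZ$, is a valid and rather concrete construction — it even produces a nonzero element of $\mathcal{N}(\mathfrak{a})$ lying in $\mathcal{MF}$, which is slightly more than the statement demands and is consistent with what Theorem \ref{thm:null_set} later establishes. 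The only step you wave at is ``necessarily $d_T(x_0,y_n)\to\infty$''; this does require the properness of the Teichm\"uller metric (closed balls compact) plus the fact that $\Phi_{GM}$ is an embedding into the metrizable space $\cl{\teich}$, but both are standard and the latter is stated in the paper, so this is not a gap.
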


\subsection{Associated foliations}
\label{subset:associated_foliation}
Let $[\mathfrak{a}]\in \partial_{GM}\teich$ and $\mathfrak{a}\in \Gmbdy-\{0\}$.
A projective measured foliation $[G]\in \mathcal{PMF}$
is said to be an \emph{associated foliation} for $[\mathfrak{a}]\in \partial_{GM}\teich$
if there exist $x\in \teich$,
 a sequence $[G_n]\in \mathcal{PMF}$ and
$t_n>0$ such that
$
R_{G_n,x}(t_n)\to [\mathfrak{a}]
$
and $[G_n]\to [G]$ as $n\to \infty$.
We call the point $x$ the \emph{base point} for the associated foliation $[G]$.
We denote by $\mathcal{AF}([\mathfrak{a}])$
the set of associated foliations for $[\mathfrak{a}]$.
\begin{figure}
\includegraphics[height=3.5cm]{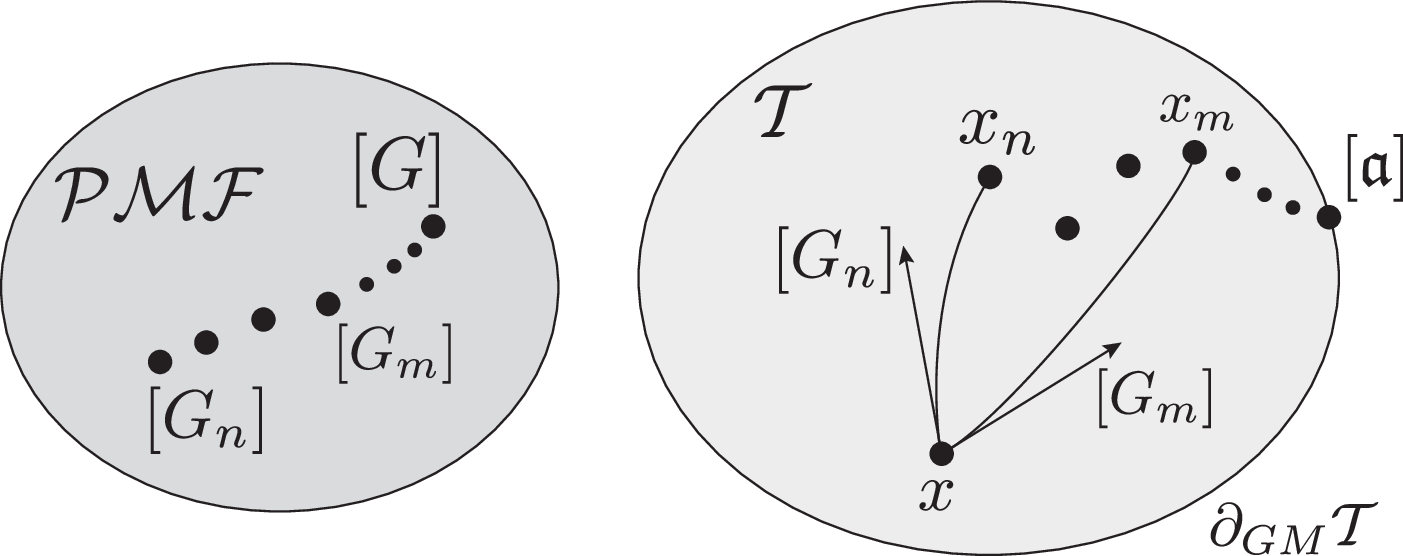}
\caption{Associated foliation $[G]$:
In the figure,
we set $x_{n}=
R_{G_n,x}(t_n)$.
$x$ is the base point for $[G]$.}
\end{figure}

In this section,
we prove the following.

\begin{proposition}[Uniqueness of vanishing curves]
\label{prop:charcterization_vanishing_curves}
Let $\mathfrak{a}\in \Gmbdy-\{0\}$.
For any $[G]\in \mathcal{AF}([\mathfrak{a}])$,
we have
$$
\mathcal{N}(G)\cap \mathcal{S}=
\mathcal{N}(\mathfrak{a})\cap \mathcal{S}.
$$
\end{proposition}

\subsubsection{Lemmas}
Let
\begin{equation}
\label{eq:null_set_measured_foliaiton}
\NMF(\mathfrak{a})=\mathcal{N}(\mathfrak{a})\cap \mathcal{MF}.
\end{equation}
When $\mathfrak{a}\in \mathcal{MF}$,
the set \eqref{eq:null_set_measured_foliaiton}
coincides with the set defined as \eqref{eq:nullset_pre}.
\begin{lemma}
\label{lem:self-intersection}
The following hold:
\begin{enumerate}
\item
\label{item:lem:self-intersection1}
$\{G\in \mathcal{MF}\mid [G]\in \mathcal{AF}([\mathfrak{a}])\}\subset
\NMF(\mathfrak{a})$.
\item
\label{item:lem:self-intersection2}
For $[G]\in \mathcal{AF}([\mathfrak{a}])$,
we have $\mathcal{N}(\mathfrak{a})\subset \mathcal{N}(G)$
and $\NMF(\mathfrak{a})\subset \NMF(G)$.
\end{enumerate}
In particular $i(G_{1},G_{2})=0$ for $[G_{1}],[G_{2}]\in \mathcal{AF}([\mathfrak{a}])$.
\end{lemma}

\begin{proof}
\eqref{item:lem:self-intersection1}\quad
Let $[G]\in \mathcal{AF}([\mathfrak{a}])$.
Take $x\in \teich$, $\{[G_n]\}_{n\in \mathbb{N}}\subset\mathcal{PMF}$,
and $t_n>0$ such that
$
R_{G_n,x}(t_n)\to [\mathfrak{a}]
$
and $G_n\to G$ as $n\to \infty$.
From \eqref{eq:image-psi-a},
$\Psi_{GM}\circ R_{G_n,x}(t_n)=e^{-t_{n}}\tilde{\Phi}_{GM}\circ R_{G_n,x}(t_n)$ converges to
$\mathfrak{a}'\in \GmInv-\{0\}$
which is projectively equivalent to $\mathfrak{a}$.
Therefore
\begin{align*}
i(\mathfrak{a}',G)
&=\lim_{n\to \infty}i(e^{-t_n}\tilde{\Phi}_{GM}\circ R_{G_n,x}(t_n),G_n) \\
&=\lim_{n\to \infty}e^{-t_n}\ext_{R_{G_n,x}(t_n)}(G_n)^{1/2} \\
&=\lim_{n\to \infty}e^{-2t_n}\ext_x(G_n)^{1/2}=0
\end{align*}
and $G\in \NMF(\mathfrak{a})$.

\medskip
\noindent
\eqref{item:lem:self-intersection2}\quad
Let $\mathfrak{b}\in \mathcal{N}(\mathfrak{a})$.
Take $x\in \teich$, $\{[G_n]\}_{n\in \mathbb{N}}\subset\mathcal{PMF}$,
$t_n>0$, and $\mathfrak{a}'$ as above.
From \eqref{eq:intesection-number-2}
and \eqref{eq:Minsky_inequality_in_general},
we have
\begin{align*}
i(G,\mathfrak{b})
&=\lim_{n\to \infty}i(G_n,\mathfrak{b}) \\
&\le\lim_{n\to \infty}\ext_{R_{G_n,x}(t_n)}(G_n)^{1/2}\ext_{R_{G_n,x}(t_n)}(\mathfrak{b})^{1/2}\\
&=\lim_{n\to \infty}e^{-t_n}\ext_x(G_n)^{1/2}\ext_{R_{G_n,x}(t_n)}(\mathfrak{b})^{1/2} \\
&=\lim_{n\to \infty}\ext_x(G_n)^{1/2}i(e^{-t_n}\tilde{\Phi}_{GM}\circ R_{G_n,x}(t_n),\mathfrak{b}) \\
&=\ext_x(G)^{1/2}i(\mathfrak{a}',\mathfrak{b})=0
\end{align*}
and $\mathfrak{b}\in\mathcal{N}(G)$.
From the definition,
$$
\NMF(\mathfrak{a})=\mathcal{N}(\mathfrak{a})\cap\mathcal{MF}
\subset \mathcal{N}(G)\cap\mathcal{MF}=\NMF(G).
$$
and we are done.
\end{proof}

For $\mathfrak{a}\in \Gmbdy-\{0\}$,
we define
\begin{align*}
\mathcal{AN}(\mathfrak{a})
&=
\cup_{[G]\in \mathcal{AF}([\mathfrak{a}])}\NMF(G)
\subset \mathcal{MF}.
%
\end{align*}
%
%
%
%

\begin{lemma}
\label{lem:vanish-intersection}
$\mathcal{AN}(\mathfrak{a})\cap \mathcal{S}\subset
\mathcal{N}(\mathfrak{a})\cap \mathcal{S}$
for all $\mathfrak{a}\in \GmInv-\{0\}$.
\end{lemma}

\begin{proof}
Let $\alpha\in \mathcal{AN}(\mathfrak{a})\cap \mathcal{S}$.
Let $[G]\in \mathcal{AF}([\mathfrak{a}])$ with $i(G,\alpha)=0$.
Then,
there are $x\in \teich$,
a sequence $\{[G_n]\}_{n\in \mathbb{N}}$ converging to $[G]$
and $t_n>0$
such that $
R_{G_n,x}(t_n)$
tends to $[\mathfrak{a}]$ as $n\to \infty$.
Let $y_t=(Y_t,f_t)=R_{G,x}(t)$.

We refer to the argument
in \S5.3 of \cite{Mi2}
(see also \cite{Ivanov2}
and \cite{Masur3}).
Let $\Gamma_G$ be the critical vertical graph of the holomorphic quadratic differential
of $J_{G,x}$.
We add mutually disjoint
critical vertical segments to $\Gamma_G$
emanating from critical points
to get a graph $\Gamma^0_G$ whose edges are all vertical.
The degree of a vertex $\Gamma^0_G$ is one-prong if it is one of endpoints
of an added vertical segment.
Take $\epsilon>0$ sufficiently small such that the $\epsilon$-neighborhood
$C(\epsilon)$ (with respect to the $|J_{G,x}|$-metric)
is embedded in $X$.
Then,
as the argument in the proof of Theorem 3.1 in \cite{Ivanov2},
by shrinking with a factor $e^{-t}$,
we get a canonical conformal embedding $g_t:C(\epsilon)\to Y_t$
such that $g_t(\Gamma_G)=f_t(\Gamma_G)$.
Since $i(\alpha,G)=0$,
$\alpha$ can be deformed into $C(\epsilon)$.
Hence,
by from the geometric definition
\eqref{eq:gemetric_definition_extremal_length} of extremal length,
the conformal embedding $g_t:C(\epsilon)\to Y_t$
induces
\begin{equation}
\label{eq:extremal-length-comparison}
\ext_{y_t}(\alpha)\le \ext_{C(\epsilon)}(\alpha)=: c_0
\end{equation}
for some $c_0>0$ independent of $t$.

Let $\epsilon>0$.
Take $T>0$ such that $2c_0e^{-T}<\epsilon$.
Since $[G_n]\to [G]$,
by \eqref{eq:exponential_maps},
there exists an $n_0>0$ such that
$d(R_{G,x}(T),R_{G_n,x}(T))\le (\log 2)/2$
and $t_n\ge T$
for $n\ge n_0$.
It has shown from Lemma 1 of \cite{Mi2}
that a function
$$
[0,\infty)\ni t\mapsto e^{-t}\ext_{y_t}(F)^{1/2}
$$
is a non-increasing function for any $F\in \mathcal{MF}$.
Hence,
from \eqref{eq:extremal-length-comparison},
we have
\begin{align*}
i(e^{-t_n}\tilde{\Phi}_{GM}\circ R_{G_n,x}(t_n),\alpha)
&=e^{-t_n}\ext_{R_{G_n,x}(t_n)}(\alpha)^{1/2} \\
&\le e^{-T}\ext_{R_{G_n,x}(T)}(\alpha)^{1/2} \\
&\le
2e^{-T}\ext_{y_T}(\alpha)^{1/2}\le 2c_0e^{-T}<\epsilon.
\end{align*}
Since 
$|t_n-d_T(x_0,R_{G_n,x}(t_n))|\le d_T(x,x_0)$,
by taking a subsequence,
$$
e^{-t_n}\tilde{\Phi}_{GM}\circ R_{G_n,x}(t_n)
=e^{t_n-d_T(x_0,R_{G_n,x}(t_n))}\cdot
\Psi_{GM}\circ R_{G_n,x}(t_n)
$$
converges to
$\mathfrak{a}'\in \GmInv-\{0\}$
with $[\mathfrak{a}']=[\mathfrak{a}]$.
Therefore,
we get
$$
i(\mathfrak{a}',\alpha)=
\lim_{n\to \infty}i(e^{-t_n}\tilde{\Phi}_{GM}\circ R_{G_n,x}(t_n),\alpha)
=0
$$
and 
$\alpha\in
\mathcal{N}(\mathfrak{a})\cap \mathcal{S}$.
\end{proof}

%
%
\subsubsection{Proof of Proposition \ref{prop:charcterization_vanishing_curves}}
Let $[G]\in \mathcal{AF}([\mathfrak{a}])$.
From \eqref{item:lem:self-intersection2} of Lemma \ref{lem:self-intersection}
and Lemma \ref{lem:vanish-intersection},
we have
\begin{align*}
\mathcal{N}(\mathfrak{a})\cap \mathcal{S}
&\subset\mathcal{N}(G)\cap \mathcal{S}
=\NMF(G)\cap \mathcal{S}
\\
&\subset 
(\cup_{[G]\in \mathcal{AF}([\mathfrak{a}])}\NMF(G))\cap \mathcal{S}
=\mathcal{AN}(\mathfrak{a})\cap \mathcal{S}\subset \mathcal{N}(\mathfrak{a})\cap \mathcal{S}.
\quad \square
\end{align*}

\subsection{Vanishing surface}
The aim of this section is to define the \emph{vanishing surface} for $\mathfrak{a}$,
which is used for proving Theorem \ref{thm:uniqueness_associate_foliations}
 stated in the
next section.

\subsubsection{Minimal vanishing surfaces}
Let $\mathfrak{a}\in \Gmbdy-\{0\}$.
Let $\vani{\mathfrak{a}}$ be the minimal essential subsurface of $X$
which contains all simple closed curve in
$\mathcal{N}(\mathfrak{a})\cap \mathcal{S}$.
We call $\vani{\mathfrak{a}}$ the \emph{minimal vanishing surface} for $\mathfrak{a}$.
By definition,
any component $Z_i$ of $\vani{\mathfrak{a}}$ contains a collection of curves in
$\mathcal{N}(\mathfrak{a})\cap \mathcal{S}$ which fills up $Z_i$.
It is possible that
either $\mathcal{N}(\mathfrak{a})\cap \mathcal{S}$
or $\vani{\mathfrak{a}}$ is empty.

\subsubsection{Properties of minimal vanishing surfaces}
From Lemma \ref{lem:fills_curves} in Appendix,
if $\alpha\in \mathcal{S}$ can be deformed into $\vani{\mathfrak{a}}$,
then $i(\mathfrak{a},\alpha)=0$
(see also Theorem 6.1 of \cite{GM}).
%

\begin{proposition}
\label{prop:betas}
Let $[G]\in \mathcal{AF}([\mathfrak{a}])$.
For $\alpha\in \mathcal{S}$,
the following are equivalent.
\begin{enumerate}
\item
\label{item:prop:betas1}
$\alpha$ is homotopic to a component of $\partial \vani{\mathfrak{a}}$.
\item
\label{item:prop:betas2}
$\alpha$ is homotopic to
either an essential curve
or a peripheral curve of $G$.
\end{enumerate}
\end{proposition}

\begin{proof}
{\rm \eqref{item:prop:betas1} $\Rightarrow$ \eqref{item:prop:betas2}.}\quad
Since
$i(\mathfrak{a},\alpha)=0$,
from Proposition \ref{prop:charcterization_vanishing_curves},
we have
$i(\alpha,G)=0$.
Suppose that $\alpha$ is non-peripheral in a component $W$
of $X-\supportMF{G}$.
Then,
there is an $\alpha'\in \mathcal{S}$
which is non-peripheral in $W$ satisfying $i(\alpha,\alpha')\ne 0$.
Since $i(\alpha',G)=0$,
$i(\alpha',\mathfrak{a})=0$
by Proposition \ref{prop:charcterization_vanishing_curves}.
This means that $\alpha$ cannot be homotopic to
a component of $\partial \vani{\mathfrak{a}}$
because $\vani{\mathfrak{a}}$ contains
a regular neighborhood of $\alpha\cup \alpha'$.
This contradicts our assumption.

\medskip
\noindent
{\rm \eqref{item:prop:betas2} $\Rightarrow$ \eqref{item:prop:betas1}.}\quad
Since $i(\alpha,G)=0$,
by Proposition \ref{prop:charcterization_vanishing_curves},
$\alpha$ can be deformed into the vanishing surface $\vani{\mathfrak{a}}$.
Suppose to the contrary that
$\alpha$ is non-peripheral in $\vani{\mathfrak{a}}$.
Then,
there is a non-peripheral curve $\delta$ in
a component of $\vani{\mathfrak{a}}$ with $i(\alpha,\delta)\ne 0$.
Since $\delta\in \mathcal{N}(\mathfrak{a})\cap \mathcal{S}$,
we have $i(\delta,G)=0$ by Proposition \ref{prop:charcterization_vanishing_curves} again.

If $\delta$ is a component of some $\partial X_i$,
$i(\alpha,G)\ge i(\alpha,G_i)\ne 0$ by Lemma 2.14 of \cite{Ivanov}.
This contradicts that $\alpha\subset \vani{\mathfrak{a}}$.
If
$\delta$ is non-peripheral in a component of $X-\supportMF{G}$,
so is $\alpha$
since $i(\alpha,\delta)\ne 0$.
This contradicts to the assumption.
\end{proof}

\begin{proposition}
\label{prop:vanishingsurface-components-pants}
For $\mathfrak{a}\in \Gmbdy-\{0\}$,
none of components of $\vani{\mathfrak{a}}$ are pairs of pants.
\end{proposition}

\begin{proof}
Let $Z$ be a component of $\vani{\mathfrak{a}}$.
Suppose to the contrary that $Z$ is a pair of pants.
Since any simple closed curve in $Z$ is homotopic to a component of $\partial Z$,
$(\vani{\mathfrak{a}}-Z)\cup N(\partial Z)$
contains all curves in $\mathcal{N}(\mathfrak{a})\cap \mathcal{S}$,
where $N(\partial Z)$ is the regular neighborhood of $\partial Z$.
This contradicts the minimality of $\vani{\mathfrak{a}}$.
%
%
%
\end{proof}

\subsubsection{Vanishing surface}
We define a subsurface $\augVani{\mathfrak{a}}$ of $X$ as follows:
\begin{enumerate}
\item
Remove annular components of $\vani{\mathfrak{a}}$ whose core is homotopic to a component of 
$\partial W$,
where $W$ runs components of $X-\vani{\mathfrak{a}}$ which are pairs of pants.
\item
 To the resulting surface,
 add components of $X-\vani{\mathfrak{a}}$
 which are pairs of pants.
\end{enumerate}
See Figure \ref{fig:vanishingsurface}.
\begin{figure}
\includegraphics[width=10cm]{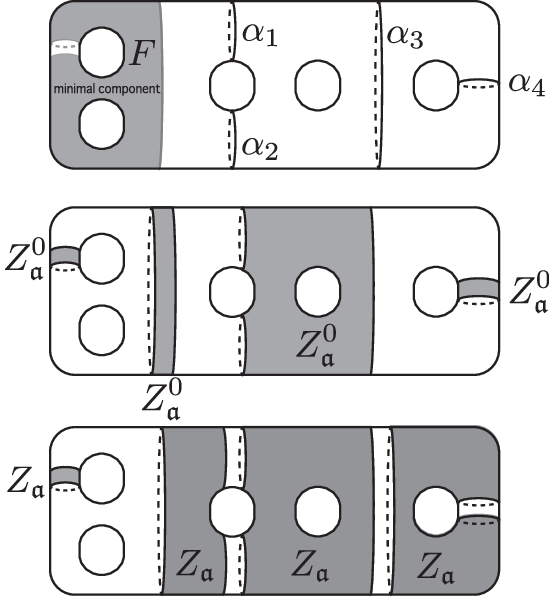}
\caption{Case of $G=F+\sum_{i=1}^{4}\alpha_{i}$.
In this case,
$\trunc{G}=G$.
$\vani{\mathfrak{a}}$ has three annular components.
Two have the core curves which are homotopic to a peripheral curve.
The other comes from an essential curve $\alpha_{4}$ of $G$.
The complement $X-\vani{\mathfrak{a}}$ has two components which are pairs of pants.}
\label{fig:vanishingsurface}
\end{figure}
We call $\augVani{\mathfrak{a}}$ the \emph{vanishing surface} for $\mathfrak{a}$.
Notice from definition
that $i(\partial Z,\mathfrak{a})=0$ for every component $Z$ of $\augVani{\mathfrak{a}}$,
and none of the components of $X-\augVani{\mathfrak{a}}$ are pairs of pants.
Recall that $\trunc{G}$ denotes the distinguished part of $G\in \mathcal{MF}$ on nullity
(cf. \S\ref{subsec:nullsets-measured-foliations}).

\subsection{Uniqueness of the underlying foliations}
\label{subsec:decomposition_theorem}
The following uniqueness theorem
implies that the underlying foliations of associated foliations for $\mathfrak{a}$
is essentially determined from $\mathfrak{a}$.


\begin{theorem}[Uniqueness of the underlying foliations]
\label{thm:uniqueness_associate_foliations}
For any $[G_1],[G_2]\in \mathcal{AF}([\mathfrak{a}])$,
$\trunc{G_1}$ and $\trunc{G_2}$ are topologically equivalent.
\end{theorem}

The above uniqueness theorem
follows from Proposition  \ref{prop:decomposition} below.

\begin{proposition}[Decomposition associated to $\mathfrak{a}$]
\label{prop:decomposition}
Let $\mathfrak{a}\in \Gmbdy-\{0\}$
and $\augVani{\mathfrak{a}}$ the vanishing surface for $\mathfrak{a}$.
Then,
the reference surface $X$ is decomposed into a union
of closed essential surfaces with mutually disjoint interiors as
\begin{equation} \label{eq:decomposition_Z_p}
\augVani{\mathfrak{a}}
\cup X_1\cup \cdots \cup X_{m_1}
\cup B_1\cup \cdots \cup B_{m_2}
\end{equation}
such that
for all $[G]\in \mathcal{AF}([\mathfrak{a}])$,
the following properties hold.
\begin{enumerate}
\item
\label{item:thm:decomposition2}
The family $\{X_i\}_{i=1}^{m_{1}}$ 
consists of all components of $X-\augVani{\mathfrak{a}}$ whose complexities are at least $1$.
The support of any minimal component of $G$ is some $X_{i}$.
For any for $i=1,\cdots,m_1$,
$X_{i}$ contains arrational foliation $F_i$ such that
the minimal component of $G$ whose support is $X_{i}$
is topologically equivalent to $F_i$.
Conversely,
for any $i$,
$G$ contains an arrational component whose support is isotopic to $X_{i}$.
\item
\label{item:thm:decomposition3}
The family $\{B_i\}_{i=1}^{m_{2}}$ consists of all annular components of $X-\augVani{\mathfrak{a}}$.
any essential curve of $G$ is homotopic to the core curve of some $B_{i}$.
Conversely,
the core of any $B_i$ is homotopic to some essential curve of $G$.
\item
\label{item:thm:decomposition4}
Any curve $\alpha\in \mathcal{S}$ deformed into $\augVani{\mathfrak{a}}$
satisfies
$i(\alpha,\mathfrak{a})=i(\alpha,G)=0$.
\end{enumerate}
\end{proposition}
%

\subsubsection*{Proof of Proposition \ref{prop:decomposition}}
Proposition \ref{prop:decomposition}
follows from the combination of the following four lemmas
given below.

\begin{lemma}[Non annular components of $\augVani{\mathfrak{a}}$]
\label{lem:vanishingsurface-components}
Let $\mathfrak{a}\in \Gmbdy-\{0\}$
and $[G]\in \mathcal{AF}([\mathfrak{a}])$.
Every non-annular component of $\augVani{\mathfrak{a}}$
is isotopic to a non-annular component of
$X-\supportMF{\trunc{G}}$,
and vice versa.
\end{lemma}

\begin{proof}
Let $Z$ be a non-annular component of $\augVani{\mathfrak{a}}$.
Suppose first that $Z$ is not a pair of pants.
Then,
$Z$ is also a component of $\vani{\mathfrak{a}}$
and
$Z$ contains a finite family of curves in $\mathcal{N}(\mathfrak{a})$
which fills up.
By Proposition \ref{prop:charcterization_vanishing_curves},
there is a component $W$ of the component of $X-\supportMF{G}$
such that $Z\subset W$ in homotopy sense.
Since $\complexity{W}\ge 1$,
$W$ is also a component of $X-\supportMF{\trunc{G}}$ in homotopy sense.
From Proposition \ref{prop:betas},
all component of $\partial Z$ is a peripheral curve in $W$.
Hence,
$\overline{Z}$ is isotopic to $\overline{W}$.

Suppose $Z$ is a pair of pants.
By definiton,
$i(\partial Z,\mathfrak{a})=0$ and $i(\partial Z,G)=0$.
Since $Z$ does not contain any minimal component of $G$,
$Z$ is contained in a component $W$ of $X-\supportMF{\trunc{G}}$.
By the same argument as above,
we obtain that $\overline{Z}$ is isotopic to $\overline{W}$.

The converse follows from the same argument.
However, let us give a sketch for the completeness.
Let $W$ be a non-annular component of $X-\supportMF{\trunc{G}}$.
If $\complexity{W}\ge 1$,
by Proposition \ref{prop:charcterization_vanishing_curves} again,
$W$ is contained in $\vani{\mathfrak{a}}$ in homotopy sense.
From Proposition \ref{prop:betas} again,
$W$ is isotopic to the component $Z$ of $\vani{\mathfrak{a}}$ containing $W$.
Since $\complexity{W}\ge 1$,
$Z$ is also a component of $\augVani{\mathfrak{a}}$ in homotopy sense.
Since $i(\partial W,G)=0$,
$i(\partial W,\mathfrak{a})=0$ and hence $\overline{Z}$ is isotopic to $\overline{W}$.
If $W$ is a pair of pants,
since $i(\partial W,\mathfrak{a})=0$ again,
we also conclude that $\overline{Z}$ is isotopic to $\overline{W}$.
\end{proof}

\begin{lemma}[Non annular components of $X-\augVani{\mathfrak{a}}$]
\label{lem:vanishingsurface-components-complement}
Let $\mathfrak{a}\in \Gmbdy-\{0\}$.
\begin{enumerate}
\item
\label{item:prop:vanishingsurface-components-complement1}
Let $[G]\in \mathcal{AF}([\mathfrak{a}])$.
Let $W$ be a component of $X-\augVani{\mathfrak{a}}$ with $\complexity{W}\ge 1$.
There is a minimal component $G_{i}$ of $G$ such that $\overline{W}=\supportMF{G_{i}}$
in homotopy sense.
Conversely,
the support of any arrational component of $G$ is isotopic to the closure of a component $W$ of
$X-\augVani{\mathfrak{a}}$ with $\complexity{W}\ge 1$.
\item
\label{item:prop:vanishingsurface-components-complement2}
For $[G_{1}],[G_{2}]\in \mathcal{AF}([\mathfrak{a}])$,
any arrational component of $G_{1}$ is topogically equivalent to that of $G_{2}$.
\end{enumerate}
\end{lemma}

\begin{proof}
\eqref{item:prop:vanishingsurface-components-complement1}\quad
Let $W$ be a component of $X-\augVani{\mathfrak{a}}$ with $\complexity{W}\ge 1$.
By definition,
$W$ is also a component of $X-\vani{\mathfrak{a}}$.
From Proposition \ref{prop:charcterization_vanishing_curves},
we have $i(\alpha,G)\ne 0$
for every curve $\alpha$ which is non-peripheral in $W$.
From Proposition \ref{prop:betas}
essential curves and peripheral curves of $G$
are deformed into $\vani{\mathfrak{a}}$.
Hence $\alpha$ intersects some minimal component $G_i$ of $G$.

We check that $\overline{W}=\supportMF{G_{i}}$ in homotopy sense.
We first check $\supportMF{G_{i}}\subset W$.
Otherwise,
there is a component $\gamma$ of $\partial W\subset \partial \vani{\mathfrak{a}}$
which intersects non-trivially to $\supportMF{G_{i}}$.
This means that $i(\gamma,G)\ge i(\gamma,G_i)\ne 0$
and hence $i(\gamma,\mathfrak{a})\ne 0$ from
Proposition \ref{prop:charcterization_vanishing_curves},
which is a contradiction.
If a component $\gamma$ of $\partial \supportMF{G_{i}}$ is non-peripheral in $W$,
$\gamma$ cannot be deformed into $\vani{\mathfrak{a}}$
and hence $i(\gamma,\mathfrak{a})\ne 0$.
Therefore, $i(\gamma,G)\ne 0$,
as we checked in the previous paragraph.
Thus,
we conclude that $\supportMF{G_{i}}\hookrightarrow \overline{W}$ is a deformation retract.

Let $G_{i}$ be a minimal component of $G$.
Since any simple closed curve which is non-peripheral in $\supportMF{G_{i}}$
satisfies $i(\alpha,G)=i(\alpha,G_{i})\ne 0$,
we have $i(\alpha,\mathfrak{a})\ne 0$.
Therefore,
$\supportMF{G_{i}}$ is disjoint from $\vani{\mathfrak{a}}$
(in homotopy sense).
Let $W$ be a component of $Z-\vani{\mathfrak{a}}$ with
$\supportMF{G_{i}}\subset W$ in homotopy sense.
Since $i(\partial \supportMF{G_{i}},G)=0$,
from Proposition \ref{prop:charcterization_vanishing_curves},
we can deduce that $\supportMF{G_{i}}$ is isotopic to $\overline{W}$.

\medskip
\noindent
\eqref{item:prop:vanishingsurface-components-complement2}\quad
Let $H_{1}$ be a minimal component of $G_{1}$.
From \eqref{item:prop:vanishingsurface-components-complement1} above,
there is a minimal component $H_{2}$ of $G_{2}$
such that $\supportMF{H_{2}}=\supportMF{H_{1}}$.
Since $i(H_{1},H_{2})\le i(G_{1},G_{2})=0$ from Lemma \ref{lem:self-intersection}.
Hence $H_{1}$ is topologically equivalent to $H_{2}$
(e.g. Theorem 1.1 in \cite{Rees}).
\end{proof}
%
%

\begin{lemma}[Annular components of $X-\augVani{\mathfrak{a}}$]
\label{lem:annular-component}
Let $\mathfrak{a}\in \Gmbdy-\{0\}$ and $[G]\in \mathcal{AF}([\mathfrak{a}])$.
The core curve of any annular component of $X-\augVani{\mathfrak{a}}$
is homotopic to an essential curve of $G$,
and vice versa.
\end{lemma}

\begin{proof}
Let $W$ be an annular component of $X-\augVani{\mathfrak{a}}$.
Let $Z_{1}$ and $Z_{2}$ be components of $\augVani{\mathfrak{a}}$
adjacent to $W$.
Possibly $Z_{1}=Z_{2}$.
Suppose some $Z_{i}$ is an annulus.
Then,
$Z_{i}$ is also a component of $Z_{\mathfrak{a}}$.
Since $W$ is also an annulus,
$Z_{i}$ is absorbed into the regular neighborhood of $\partial Z_{j}$ where $\{i,j\}=\{1,2\}$.
This contradicts to the minimality of $\vani{\mathfrak{a}}$,
because
each component of $\partial Z_{j}$ is in $\mathcal{N}(\mathfrak{a})\cap \mathcal{S}$
and the regular neighborhood of $\partial Z_{j}$ is contained in $\vani{\mathfrak{a}}$.
Hence,
the core curve $\delta$ of $W$ is not a peripheral curve of $G$
from Lemma \ref{lem:vanishingsurface-components}.

Since the core $\delta$ of $W$ is non-peripheral in $Z_{1}\cup W\cup Z_{2}$,
we can take a curve $\beta\in \mathcal{S}$ such that $\beta\subset Z_{1}\cup W\cup Z_{2}$
and $i(\delta,\beta)\ne 0$.
If $\delta$ is not an essential curve of $G$,
$i(\beta,G)=0$
and hence $i(\beta,\mathfrak{a})=0$.
Therefore,
$Z_{1}\cup W\cup Z_{2}$ is a non-annular component of $X-\supportMF{\trunc{G}}$,
since each component of $X-\supportMF{\trunc{G}}$ is incompressible.
This is a contradiction because $W$ can be deformed into the outside of
$\augVani{\mathfrak{a}}$ (cf. Lemma \ref{lem:vanishingsurface-components}).

Conversely,
let $\delta$ be a essential curve of $G$.
Let $W_{1}$ and $W_{2}$ be components of $X-\supportMF{\trunc{G}}$
which are adjacent to the annular component $N_{\delta}$ of $\supportMF{\trunc{G}}$ whose core is
$\delta$.
Since neither $W_{1}$ nor $W_{2}$ is not annulus,
from Lemma \ref{lem:vanishingsurface-components},
each $W_{i}$ is a component of $\augVani{\mathfrak{a}}$.
Therefore,
$N_{\delta}$ is a component of $X-\augVani{\mathfrak{a}}$.
\end{proof}

\begin{lemma}[Annular component of $\augVani{\mathfrak{a}}$]
\label{lem:annular-component-of-hat-Z}
Let $\mathfrak{a}\in \Gmbdy-\{0\}$ and $[G]\in \mathcal{AF}([\mathfrak{a}])$.
The core curve of any annular component of $\augVani{\mathfrak{a}}$ is
homotopic to a component of the boundary of the support of a minimal component of $G$.
\end{lemma}

\begin{proof}
Let $Z$ be an annular component of $\augVani{\mathfrak{a}}$.
Then,
$Z$ is also a component of $\vani{\mathfrak{a}}$.
Hence the core curve $\delta$ of $Z$ is not peripheral in $X$.
Let $\partial Z=\gamma_{1}\cup \gamma_{2}$
Let $W_{1}$ and $W_{2}$ be the closures of components of $X-\augVani{\mathfrak{a}}$
such that $\gamma_{i}\subset \partial W_{i}$ ($i=1,2$).
Possibly $W_{1}=W_{2}$.
Since each $W_{i}$ is not a pair of pants,
if some $W_{i}$ is an annulus,
$Z$ is absorbed in the component of $\augVani{\mathfrak{a}}$
which is on the opposite side of $W_{i}$ to $Z$.
This contradicts to the minimality of $\vani{\mathfrak{a}}$.
Hence,
each $W_{i}$ satisfies $\complexity{W_{i}}\ge 1$,
from Lemma \ref{lem:vanishingsurface-components-complement},
we conclude that $\delta$ is homotopic to a component of the boundary
of some minimal component of $G$.
\end{proof}

\subsection{Intersection number lemma}
The following intersection number lemma encodes the intersection number
for two points in $\partial_{GM}\teich$ to that of those associated foliations
up to multiple by positive constant.

\begin{lemma}[Intersection number lemma]
\label{lem:intersection_number}
Let $\mathfrak{a},\mathfrak{b}\in \Gmbdy-\{0\}$
and $[G]\in \mathcal{AF}([\mathfrak{a}])$ and $[H]\in \mathcal{AF}([\mathfrak{b}])$.
Then,
there is an $[F_\infty]\in \overline{\mathcal{AF}([\mathfrak{a}])}$ in $\mathcal{PMF}$ such that
\begin{equation}
\label{eq:intersection_number-lemma}
D_0\,i_{x_{0}}([G],[H])
\le 
i_{x_{0}}([\mathfrak{a}],[\mathfrak{b}])
\le
i_{x_{0}}([F_{\infty}],[\mathfrak{b}])
\end{equation}
where $D_0=e^{-d_T(x_0,x)-d_T(x_0,y)}$
and $x$ and $y$ are base points for the associated foliations $[G]$ and $[H]$ respectively.
\end{lemma}

\begin{proof}
By definition,
there are
$\{[G_n]\}_{n\in \mathbb{N}}$,
$\{[H_n]\}_{n\in \mathbb{N}}\subset \mathcal{PMF}$
and $t_n,s_n>0$ such that 
\begin{itemize}
\item
$
R_{G_n,x}(t_n)\to [\mathfrak{a}]$
and
$
R_{H_n,y}(s_n)\to [\mathfrak{b}]$
as $n\to \infty$,
and
\item
$G_n\to G$ and $H_n\to H$
as $n\to \infty$.
\end{itemize}
For simplicity,
let $x_n=R_{G_n,x}(t_n)$ and $y_n=R_{H_n,y}(s_n)$.

Since $d_T(x_0,x_n)\le t_n+d_T(x_{0},x)$
and $d_T(x_0,y_n)\le s_n+d_T(x_{0},y)$,
from Proposition \ref{prop:IntersectionNumberBasePoint},
we deduce
\begin{align*}
i_{x_{0}}(x_{n},y_{n})
&=\exp(-2\gromov{x_n}{y_n}{x_0})\\
&=\exp(d_T(x_n,y_n)-d_T(x_0,x_n)-d_T(x_0,y_n)) \\
&\ge D_0\exp(d_T(x_n,y_n))e^{-t_n}e^{-s_n} \\
&= D_0\exp(d_T(x_n,y_n))
\frac{\ext_{x_n}(G_n)^{1/2}}{\ext_{x_0}(G_n)^{1/2}}
\frac{\ext_{y_n}(H_n)^{1/2}}{\ext_{x_0}(H_n)^{1/2}} \\
&= D_0
\frac{\ext_{x_n}(G_n)^{1/2}}{\ext_{x_0}(G_n)^{1/2}}
\frac{\exp(d_T(x_n,y_n))\ext_{y_n}(H_n)^{1/2}}{\ext_{x_0}(H_n)^{1/2}} \\
&\ge D_0
\frac{\ext_{x_n}(G_n)^{1/2}}{\ext_{x_0}(G_n)^{1/2}}
\frac{\ext_{x_n}(H_n)^{1/2}}{\ext_{x_0}(H_n)^{1/2}} \\
&\ge D_0
\frac{i(H_n,G_n)}{\ext_{x_0}(G_n)^{1/2}\ext_{x_0}(H_n)^{1/2}}
=D_{0}i_{x_{0}}([G_{n}],[H_{n}]).
\end{align*}
By letting $n\to \infty$,
we obtain the left-hand side of \eqref{eq:intersection_number-lemma}.


Fix $n\in \mathbb{N}$.
Let $F_{m,n}\in \mathcal{MF}_1$
with $x_m=R_{F_{m,n},y_n}(u_{m,n})$,
where
$u_{m,n}=d_{T}(x_{n},y_{m})$.
Notice that
\begin{equation}
\label{eq:comparison_extremal_length_x_m_y_n}
\ext_{x_m}(F_{m,n})=e^{-2u_{m,n}}\ext_{y_n}(F_{m,n}).
\end{equation}
By taking a subsequence
(or by the diagonal argument),
we may assume that $F_{m,n}\to F_{\infty,n}\in \mathcal{MF}_1$ as $m\to \infty$
for each $n$,
and
$F_{\infty,n}$ converges to
$F_{\infty}\in \mathcal{MF}_1$.
Since $
x_m\to [\mathfrak{a}]$,
$[F_{\infty,n}]$ is an associated foliation for $\mathfrak{a}$
with base point $y_n$.
Therefore,
the limit $[F_\infty]$ is contained in the closure of $\mathcal{AF}([\mathfrak{a}])$ in $\mathcal{PMF}$.

Since $F_{m,n}\in \mathcal{MF}_1$,
from Theorem \ref{thm:main_realization},
\eqref{eq:intesection-number-2},
\eqref{eq:image-psi-a}
and \eqref{eq:comparison_extremal_length_x_m_y_n},
we deduce
\begin{align*}
i_{x_{0}}(y_{n},x_{m})
&=\exp(-2\gromov{y_n}{x_m}{x_0}) \\
&=\exp(u_{m,n}-d_T(x_0,x_m)-d_T(x_0,y_n)) \\
&=\exp(-d_T(x_0,y_n))\frac{\ext_{y_n}(F_{m,n})^{1/2}}
{\exp(d_T(x_0,x_m))\ext_{x_m}(F_{m,n})^{1/2}} \\
&\le \exp(-d_T(x_0,y_n))\ext_{y_n}(F_{m,n})^{1/2} \\
&=i(\Psi_{GM}(y_n),F_{m,n})
=i(\Psi_{GM}(y_n),\Psi_{GM}(F_{m,n})) \\
&=i_{x_{0}}(y_n,[F_{m,n}]).
\end{align*}
Letting $m\to \infty$,
we conclude
\begin{equation}
\label{eq:intersection_1}
i_{x_{0}}(y_{n},[\mathfrak{a}])
\le
i_{x_{0}}(y_n,[F_{\infty,n}]).
\end{equation}
Thus,
if $n\to \infty$ in \eqref{eq:intersection_1},
we obtain what we wanted.
\end{proof}

\subsection{Proof of Structure theorem}
\label{subsec:Proof_of_Theorem_null_sets}
We first check the following.

\begin{lemma}
\label{lem:zeros_topologically_equivalent}
Let $[G]\in \mathcal{AF}([\mathfrak{a}])$.
Let $F\in \mathcal{MF}$ be a measured foliation which is topologically equivalent
to a minimal component of $G$.
Then,
$i(F,\mathfrak{a})=0$.
\end{lemma}

\begin{proof}
Take $x\in \teich$, $[G_n]\in \mathcal{PMF}$,
and $t_n>0$ such that
$
R_{G_n,x}(t_n)\to [\mathfrak{a}]$
and $G_n\to G$ as $n\to \infty$.
Let $y_n=R_{G_n,x}(t_n)$.
Let $L_{F,y_n}$ be the geodesic current associated to
the singular flat structure
defined as $Q_{n}:=J_{F,y_n}/\|J_{F,y_n}\|$
given by Duchin,
Leininger and 
Rafi in \cite{DLR}.

Suppose on the contrary that $i(\mathfrak{a},F)\ne 0$.
Then,
by Proposition 4 in \cite{Mi2},
$\{Q_n\}_{n\in \mathbb{N}}$ is a stable sequence in the sense that
the set of accumulation points of $\{e^{-t_n}L_{F,y_n}\}_{n\in \mathbb{N}}$
in the space of geodesic currents is contained in $\mathcal{MF}-\{0\}$
(as geodesic currents).
In addition,
any accumulation point $L_\infty\in \mathcal{MF}-\{0\}$ satisfies
\begin{align}
i(L_\infty,F)
&=t_0i(\mathfrak{a},F)
\ne 0
\label{eq:limits_DLR1}
\\
i(L_\infty,H)
&\le t_0i(\mathfrak{a},H)
\label{eq:limits_DLR2}
\end{align}
for some $t_0>0$ and any $H\in \mathcal{MF}$
(see Proposition 5 in \cite{Mi2}).

Let $G_0$ be a minimal component of $G$
which is topologically equivalent to $F$
and $X_0$ be the support of $G_0$.
From \eqref{eq:limits_DLR2}, $i(L_\infty,G)=0$.
Hence,
if $L_\infty$ has a component $L_0$ whose support intersects $X_0$,
then $L_0$ is topologically equivalent to $G_0$
(cf. \cite{Ivanov}).
This means that $i(L_\infty,F)=0$,
which contradicts to \eqref{eq:limits_DLR1}.
\end{proof}

\begin{proof}[Proof of Theorem \ref{thm:null_set}]
We are ready to prove Theorem \ref{thm:null_set}.
%
%
Let $[G]\in \mathcal{AF}([\mathfrak{a}])$.
From \eqref{item:lem:self-intersection2} of Lemma \ref{lem:self-intersection},
we need to show the converse
$\mathcal{N}(\mathfrak{a})\supset \mathcal{N}(G)$.

We first claim that $\NMF(\mathfrak{a})=\NMF(G)$
for $[G]\in \mathcal{AF}([\mathfrak{a}])$.
We decompose $G$ as \eqref{eq:decompositionMF}:
$$
G=G'_1+G'_2+\cdots+G'_{m_1}+\beta_1+\cdots \beta_{m_2}
+\gamma_1+\cdots+\gamma_{m_3}.
$$

Let $H\in \NMF(G)$.
Then,
$H$ can be decomposed as
\begin{equation}
\label{eq:intersection_H}
H=\sum_{i=1}^{m_1}H_i+
\sum_{i=1}^{m_2}a_i\beta_i
+\sum_{i=1}^{m_1}\sum_{\gamma\subset \partial X_i} b_\gamma\gamma
+F_0
\end{equation}
where $a_i,b_\gamma\ge 0$,
$H_i$ is a measured foliation topologically equivalent to $G'_i$
 (possibly $H_i=0$),
and $F_0$ is a measured foliation whose support is
contained in the complement of $\supportMF{G}$
(cf. \cite{Ivanov}).
From Proposition \ref{prop:decomposition},
the support of $F_0$ is contained in
the vanishing surface
$\augVani{\mathfrak{a}}$.
Therefore,
$i(F_0,\mathfrak{a})=0$.
Since any component of $\partial X_i$ is deformed into $\vani{\mathfrak{a}}$,
from Lemma \ref{lem:subadditivity} and Lemma \ref{lem:zeros_topologically_equivalent},
we have
\begin{equation}
\label{eq:intersection_H_1}
i(H,\mathfrak{a})
\le
\sum_{i=1}^{m_1}i(H_i,\mathfrak{a})
+
\sum_{i=1}^{m_2}a_ii(\beta_i,\mathfrak{a})
+\sum_{i=1}^{m_1}\sum_{\gamma\subset \partial X_i}b_\gamma
i(\gamma,\mathfrak{a})
+i(F_0,\mathfrak{a})=0
\end{equation}
and hence $\NMF(G)\subset \NMF(\mathfrak{a})$.

Let $\mathfrak{b}\in \mathcal{N}(G)$
and take $\{y_{n}\}_{n=1}^{\infty}$
such that $y_n\to [\mathfrak{b}]$
as $n\to \infty$.
Let $H_n\in \mathcal{MF}_1$,
$s_n>0$ such that $y_n=R_{H_n,x_0}(s_n)$.
By taking a subsequence,
we may assume that $H_n\to H_\infty$.
Then,
$[H_\infty]\in \mathcal{AF}([\mathfrak{b}])$.
Let $[F_\infty]\in \overline{\mathcal{AF}([\mathfrak{a}])}$ as
Lemma \ref{lem:intersection_number} for $[G]$, $[H_{\infty}]$,
$\mathfrak{a}$ and $\mathfrak{b}$.
To show that $\mathfrak{b}\in \mathcal{N}(\mathfrak{a})$,
it suffices to show that $i(\mathfrak{b},F_{\infty})=0$ from Lemma \ref{lem:intersection_number}.

Since $\mathfrak{b}\in \mathcal{N}(G)$
and $\NMF(H_\infty)=\NMF(\mathfrak{b})$,
we have $i(G,H_\infty)=0$.
Therefore,
$H_\infty$ is decomposed as
\begin{equation}
\label{eq:decompositionH_infty}
H_{\infty}=\sum_{i=1}^{m_1}H'_i+\sum_{i=1}^{m_2}a_i\beta_i
+\sum_{i=1}^{m_1}\sum_{\gamma\subset \partial X_i}b_\gamma\gamma
+H_0
\end{equation}
where $H'_i$ is topologically equivalent to $G'_i$,
$a_i,b_\gamma\ge 0$,
$H_0$ is a measured foliation whose support is contained
in the complement of $\supportMF{G}$.
Since $[F_\infty]\in \overline{\mathcal{AF}([\mathfrak{a}])}$,
from Theorem \ref{thm:uniqueness_associate_foliations},
$F_{\infty}$ is decomposed as
\begin{equation}
\label{eq:decompositionF_infty}
F_{\infty}=\sum_{i=1}^{m_1}F'_i+\sum_{i=1}^{m_2}a_i\beta_i
+\sum_{i=1}^{m_1}\sum_{\gamma\subset \partial X_i}b_\gamma\gamma,
\end{equation}
where $F'_\infty$ is topologically equivalent to $G'_i$
(possibly $F'_i=0$)
and $a_i,b_\gamma\ge 0$.
From \eqref{eq:decompositionH_infty} and \eqref{eq:decompositionF_infty},
we have $i(F_\infty,H_\infty)=0$.
Since $\NMF(H_\infty)=\NMF(\mathfrak{b})$ again,
we conclude that $i(\mathfrak{b},F_\infty)=0$
as desired.
\end{proof}

\subsection{Topological equivalence revisited}
Before closing this section,
we notice the following expected property.

\begin{corollary}[Topological equivalence and Null sets]
\label{coro:topological_equivalence_null_set}
For $G,H\in \mathcal{MF}$,
the following are equivalent:
\begin{enumerate}
\item
\label{item:coro:topological_equivalence_null_set1}
$\trunc{G}$ and $\trunc{H}$ are topologically equivalent;
\item
\label{item:coro:topological_equivalence_null_set2}
$\NMF(G)=\NMF(H)$;
\item
\label{item:coro:topological_equivalence_null_set3}
$\mathcal{N}(G)=\mathcal{N}(H)$.
\end{enumerate}
In particular,
$\mathcal{N}(G)=\mathcal{N}(\trunc{G})$ for any $G\in \mathcal{MF}$.
\end{corollary}

\begin{proof}
From Proposition \ref{prop:measured_foliation_removing_peripehral},
the conditions \eqref{item:coro:topological_equivalence_null_set1}
and
\eqref{item:coro:topological_equivalence_null_set2} are equivalent.
Since
$\NMF(G)=\mathcal{N}(G)\cap \mathcal{MF}$,
\eqref{item:coro:topological_equivalence_null_set2} follows from \eqref{item:coro:topological_equivalence_null_set3}.
Hence,
we need to show that \eqref{item:coro:topological_equivalence_null_set1}
implies \eqref{item:coro:topological_equivalence_null_set3}.
From the symmetry of the topological equivalence,
it suffices to show that $\mathcal{N}(G)\subset\mathcal{N}(H)$.

Let $\mathfrak{a}\in \mathcal{N}(G)$
and $[F]\in \mathcal{AF}([\mathfrak{a}])$.
Then,
$i(G,F)=0$ from Theorem \ref{thm:null_set}.
Since $\trunc{H}$ is topologically equivalent to $\trunc{G}$,
by Proposition \ref{prop:measured_foliation_removing_peripehral},
we have $i(H,F)=0$.
Hence,
by applying Theorem \ref{thm:null_set}
again,
we have $i(H,\mathfrak{a})=0$ and $\mathfrak{a}\in \mathcal{N}(H)$.
\end{proof}

\section{Action on the Reduced boundary}
\label{sec:action_Reduced_boundary}
Let $S$ and $S'$ be compact orientable surfaces of non-sporadic type.
In this section,
we study maps in $\ACINV(\teich(S),\teich(S'))$.

\subsection{Null sets and accumulation sets}
For $p\in\cl{\teich(S)}$,
we define the \emph{null set} for $p$ by
$$
\nullsets{S}{p}
=\{q\in \cl{\teich(S)}\mid i_{x_0}(p,q)=0\}.
$$
For ${\bf x}\in \sq{\teich(S)}$,
we define
\begin{equation}
\label{eq:set_M}
\accum_S({\bf x})=\cup\{\overline{{\bf z}}\cap \partial_{GM}\teich(S)\mid {\bf z}\in
 \visualindist({\bf x})\},
\end{equation}
where $\overline{{\bf z}}$ is the closure of ${\bf z}$ in $\cl{\teich(S)}$.
The following proposition 
follows from \eqref{eq:Gromov_product_boundary_continuous}.

\begin{proposition}
\label{prop:accumulation_point_accompany}
Let $p,p^1,p^2\in \partial_{GM}\teich(S)$
and ${\bf x}$, ${\bf x}^1$, ${\bf x}^2\in \squ{\teich(S)}$.
\begin{enumerate}
\item
\label{item:prop:accumulation_point_accompany1}
If ${\bf x}$ converges to $p$,
$\nullsets{S}{p}=\accum_{S}({\bf x})$.
\item
\label{item:prop:accumulation_point_accompany2}
Suppose each ${\bf x}^{i}$ converges to $p^i$ for $i=1,2$.
Then,
$\nullsets{S}{p^2}\subset \nullsets{S}{p^1}$
if and only if $\visualindist({\bf x}^2)\subset \visualindist({\bf x}^1)$.
\end{enumerate}
\end{proposition}

\begin{proposition}[Structure of accumulation points]
\label{prop:null_sets_and_accompany}
Let ${\bf x}\in \sq{\teich(S)}$.
Then,
there is $G\in \mathcal{MF}$ such that
$$
\accum_{S}({\bf x})=\nullsets{S}{[G]}.
$$
Furthermore,
the following are equivalent for $q\in \partial_{GM}\teich(S)$:
\begin{enumerate}
\item
$q\in \nullsets{S}{[G]}$;
\item
for any $p\in \overline{{\bf x}}\cap \partial_{GM}\teich(S)$
and $[G_p]\in \mathcal{AF}(p)$,
$q\in \nullsets{S}{[G_p]}$;
\item
for any $p\in \overline{{\bf x}}\cap \partial_{GM}\teich(S)$
$q\in \nullsets{S}{p}$.
\end{enumerate}
\end{proposition}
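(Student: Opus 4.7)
The plan is to choose $G$ as an associated foliation of some accumulation point of ${\bf x}$ in $\partial_{GM}\teich$, and then deduce both the identity $\accum({\bf x}) = \mathcal{N}^\Psi([G])$ and the equivalences (1)$\Leftrightarrow$(2)$\Leftrightarrow$(3) from the null set theory of Section~\ref{sec:Null_sets_section}, in particular Theorem~\ref{thm:associated_foliations_and_null_sets} and Corollary~\ref{coro:topological_equivalence_null_set}. By compactness of $\cl{\teich}$ and the fact that ${\bf x}$ cannot cluster at an interior point of $\teich$ (since $\gromov{x_n}{x_m}{x_0}\to\infty$), there exists an accumulation point $p_0\in\overline{{\bf x}}\cap\partial_{GM}\teich$. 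Pick $[G]\in\mathcal{AF}(\Psi_{GM}(p_0))$; Theorem~\ref{thm:associated_foliations_and_null_sets} then yields $\mathcal{N}^\Psi([G])=\mathcal{N}^\Psi(p_0)$.

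To prove $\accum({\bf x})\subset \mathcal{N}^\Psi([G])$, take $q\in\accum({\bf x})$. By definition there exist ${\bf z}\in\visualindist({\bf x})$ and a subsequence ${\bf z}'\subset{\bf z}$ with ${\bf z}'\to q$; this subsequence still lies in $\visualindist({\bf x})$ by (4) of Remark~\ref{remark:accompanying_sequence}. Pair it with a subsequence ${\bf y}\subset{\bf x}$ converging to $p_0$: the defining condition of visual indistinguishability gives $\gromov{z'_k}{y_n}{x_0}\to\infty$, and boundary continuity of the Gromov product on $\cl{\teich}$ (cf.\ \eqref{eq:Gromov_product_boundary_continuous}) yields $\gromov{p_0}{q}{x_0}=\infty$, i.e.\ $q\in\mathcal{N}^\Psi(p_0)=\mathcal{N}^\Psi([G])$.

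The equivalence (2)$\Leftrightarrow$(3) is immediate from $\mathcal{N}^\Psi([G_p])=\mathcal{N}^\Psi(p)$ (Theorem~\ref{thm:associated_foliations_and_null_sets}), and (3)$\Rightarrow$(1) follows by specializing $p=p_0$. The delicate step is (1)$\Rightarrow$(3), which asserts that every accumulation point $p$ of ${\bf x}$ in $\partial_{GM}\teich$ satisfies $\mathcal{N}^\Psi(p)=\mathcal{N}^\Psi(p_0)$. The Gromov-product argument above (applied with $p$ in place of $p_0$) gives $i_{x_0}(p,p_0)=0$, and then Theorem~\ref{thm:associated_foliations_and_null_sets} forces $i(G,G_p)=0$ for any $[G_p]\in\mathcal{AF}(\Psi_{GM}(p))$. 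To upgrade this to $\mathcal{N}(G)=\mathcal{N}(G_p)$, by Corollary~\ref{coro:topological_equivalence_null_set} it suffices to show that $\trunc{G}$ and $\trunc{G_p}$ are topologically equivalent. Representing both as limits of Teichm\"uller ray directions along subsequences of ${\bf x}$ (via the exponential map \eqref{eq:exponential_maps}) and combining the decomposition theorem (Theorem~\ref{thm:decomposition}) with the intersection number lemma (Lemma~\ref{lem:intersection_number}), one argues that if $\trunc{G}$ and $\trunc{G_p}$ had inequivalent arational components, then the corresponding Teichm\"uller rays from $x_0$ would diverge at close-to-maximal rate, making $\gromov{x_{n_k}}{x_{m_\ell}}{x_0}$ bounded along the two subsequences and contradicting convergence at infinity.

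Once this common null set is established, the reverse inclusion $\mathcal{N}^\Psi([G])\subset\accum({\bf x})$ is easy: given $q\in\mathcal{N}^\Psi([G])$, pick any sequence $\{y_n\}\subset\teich$ with $y_n\to q$ (possible since $\teich$ is dense in $\cl{\teich}$). For every accumulation point $p$ of ${\bf x}$, (3) yields $\gromov{q}{p}{x_0}=\infty$; compactness of $\overline{{\bf x}}\cap\partial_{GM}\teich$ together with boundary continuity promotes this to $\gromov{y_n}{x_m}{x_0}\to\infty$ as $n,m\to\infty$, so $\{y_n\}\in\visualindist({\bf x})$ and $q\in\accum({\bf x})$. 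The principal obstacle is the topological-equivalence step in (1)$\Rightarrow$(3); concretely, ruling out decompositions whose arational components live on disjoint vanishing surfaces requires the precise extremal-length estimates for Teichm\"uller rays supplied by the machinery of Section~\ref{sec:Null_sets_section}.
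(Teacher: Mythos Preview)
Your choice of $G$ as an associated foliation of a \emph{single} accumulation point $p_0$ is the problem, and the step you flag as ``delicate'' --- showing $\mathcal{N}^\Psi(p)=\mathcal{N}^\Psi(p_0)$ for every accumulation point $p$ of ${\bf x}$ --- is in fact false in general. Take disjoint simple closed curves $\alpha,\beta\in\mathcal{S}$ and a third curve $\gamma$ with $i(\alpha,\gamma)=0$ but $i(\beta,\gamma)\ne 0$. Since $i_{x_0}$ vanishes on every pair from $\{[\alpha],[\alpha+\beta]\}$, continuity of $i_{x_0}$ on $\cl{\teich}\times\cl{\teich}$ lets you interleave sequences converging to $[\alpha]$ and to $[\alpha+\beta]$ into a single sequence ${\bf x}$ that converges at infinity and has both as accumulation points. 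But $[\gamma]\in\mathcal{N}^\Psi([\alpha])\setminus\mathcal{N}^\Psi([\alpha+\beta])$, so the two null sets differ. Your sketched argument only rules out \emph{inequivalent arational components}; it says nothing about one $\trunc{G_p}$ being a proper subfoliation of another, which is exactly what happens here.

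The paper avoids this by building $G$ differently: since $i(G_{p_1},G_{p_2})=0$ for all accumulation points $p_1,p_2$, the truncated foliations $\trunc{G_p}$ can be assembled into a single $G\in\mathcal{MF}$ such that each $\trunc{G_p}$ is topologically equivalent to a subfoliation of $G$, and conversely every component of $\trunc{G}$ comes from some $G_p$. With this $G$, the inclusion $\mathcal{N}^\Psi([G])\subset\mathcal{N}^\Psi([G_p])=\mathcal{N}^\Psi(p)$ is immediate (giving (1)$\Rightarrow$(3)), and the reverse direction uses that every component of $G$ is witnessed by some $p$. Your argument for $\accum({\bf x})\subset\mathcal{N}^\Psi([G])$ and for the final inclusion via Remark~\ref{remark:accompanying_sequence}(5) is fine once $G$ is chosen correctly; the error is entirely in the identification $G=G_{p_0}$.
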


\begin{proof}
For $p\in \overline{{\bf x}}\cap \partial_{GM}\teich(S)$,
fix $[G_p]\in \mathcal{AF}(p)$.
From
\eqref{eq:Gromov_product_boundary_continuous} and
Theorem \ref{thm:null_set},
$i(G_{p^1},G_{p^2})=0$ for $p^1,p^2\in \overline{{\bf x}}\cap \partial_{GM}\teich(S)$.
Hence,
we can find $G\in \mathcal{MF}$
such that 
\begin{enumerate}
\item[(1)]
for any $p\in \overline{{\bf x}}\cap \partial_{GM}\teich(S)$,
$\trunc{G_p}$ is topologically equivalent
to a subfoliation of $G$,
and
\item[(2)]
any component of $\trunc{G}$
is topologically equivalent
to a component of some $G_p$,
$p\in \overline{{\bf x}}\cap \partial_{GM}\teich(S)$.
\end{enumerate}
We check that $G$ satisfies the desired property.
Let $q\in \accum_{S}({\bf x})$
be an accumulation point of
${\bf z}\in \visualindist({\bf x})$.
Let $[H]\in \mathcal{AH}(q)$.
Since $i(H,G_p)=0$
for all $p\in {\bf x}\cap \partial_{GM}\teich(S)$,
from the condition (2) of $G$,
we have $i(G,H)=0$
and hence $\Psi_{GM}(q)\in \mathcal{N}(G)$
by Theorem \ref{thm:null_set}.
This means that
$q\in \nullsets{S}{[G]}$ and
$
\accum_{S}({\bf x})
\subset\nullsets{S}{[G]}$.

Conversely,
let $q\in \nullsets{S}{[G]}$.
Take a sequence ${\bf z}$ in $X$ converging to $q$.
By the condition (1) of $G$ above,
$\nullsets{S}{[G]}\subset \nullsets{S}{[G_p]}$
for all $p\in \overline{{\bf x}}\cap \partial_{GM}\teich(S)$.
In other words,
any subsequence of ${\bf x}$
contains a subsequence which is visually indistinguishable from ${\bf z}$.
Therefore,
we have
${\bf x}\in \visualindist({\bf z})$
and hence ${\bf z}\in \visualindist({\bf x})$.

The last statement follows from the construction of $G$
and Theorem \ref{thm:null_set}.
\end{proof}

\begin{proposition}
\label{prop:accumulation_points_and_accompany}
Let ${\bf x}^1,{\bf x}^2\in \sq{\teich(S)}$.
The following are equivalent:
\begin{enumerate}
\item[{\rm (1)}]
$\accum_{S}({\bf x}^1)\subset\accum_{S}({\bf x}^2)$;
\item[{\rm (2)}]
$\visualindist({\bf x}^1)\subset \visualindist({\bf x}^2)$.
\end{enumerate}
\end{proposition}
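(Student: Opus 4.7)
The plan is to prove the two implications separately, with (2) $\Rightarrow$ (1) being immediate and (1) $\Rightarrow$ (2) being the substantive direction. For (2) $\Rightarrow$ (1): if $\mathbf{w} \in \visualindist(\mathbf{x}^1)$, then by hypothesis $\mathbf{w} \in \visualindist(\mathbf{x}^2)$, so $\overline{\mathbf{w}} \cap \partial_{GM}\teich \subset \accum(\mathbf{x}^2)$; taking the union over all such $\mathbf{w}$ yields $\accum(\mathbf{x}^1) \subset \accum(\mathbf{x}^2)$.

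For (1) $\Rightarrow$ (2), I would argue by contradiction using the characterization of $\accum$ supplied by Proposition \ref{prop:null_sets_and_accompany}, specifically its equivalent form (3). Take $\mathbf{z} \in \visualindist(\mathbf{x}^1)$ and suppose $\mathbf{z} \notin \visualindist(\mathbf{x}^2)$. By the argument immediately following Remark \ref{remark:accompanying_sequence}, one can extract indices $m(k),n(k)$ and a constant $M>0$ with $\gromov{z_{m(k)}}{x^2_{n(k)}}{x_0}\le M$ for all $k$. Observe first that any sequence convergent at infinity in the Gromov-product sense must satisfy $d_T(x_0,\cdot)\to\infty$ (otherwise some subsequence would have bounded Gromov products), so no accumulation point of either sequence in the compact space $\cl{\teich}$ can lie in $\teich$ itself. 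Passing to further subsequences by compactness, we may thus arrange that $z_{m(k)}\to q\in \partial_{GM}\teich$ and $x^2_{n(k)}\to p\in \partial_{GM}\teich$.

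Now the subsequence $\{z_{m(k)}\}$ lies in $\visualindist(\mathbf{x}^1)$ by part (4) of Remark \ref{remark:accompanying_sequence}, so by definition of $\accum$ we have $q\in \accum(\mathbf{x}^1)\subset \accum(\mathbf{x}^2)$, while $p\in \accum(\mathbf{x}^2)$ since $\mathbf{x}^2\in \visualindist(\mathbf{x}^2)$. Applying the equivalence (1)$\Leftrightarrow$(3) of Proposition \ref{prop:null_sets_and_accompany} to the sequence $\mathbf{x}^2$, one obtains $q\in \mathcal{N}^\Psi(p)$, that is $i_{x_0}(p,q)=0$. On the other hand, continuity of $i_{x_0}$ on $\cl{\teich}\times \cl{\teich}$ together with (iv) of Theorem \ref{thm:main_realization} gives
\[
i_{x_0}(p,q)=\lim_{k\to\infty}\exp\!\bigl(-2\gromov{z_{m(k)}}{x^2_{n(k)}}{x_0}\bigr)\ge e^{-2M}>0,
\]
which is the desired contradiction.

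I do not anticipate a serious obstacle; the whole argument is a packaging of Proposition \ref{prop:null_sets_and_accompany} with a routine compactness/diagonal extraction. The only subtle point is ensuring that the subsequential limits $p,q$ lie in the boundary $\partial_{GM}\teich$ rather than in $\teich$, so that the null-set characterization is applicable; this is forced automatically by the fact that convergence at infinity in the Gromov-product sense implies $d_T(x_0,\cdot)\to\infty$.
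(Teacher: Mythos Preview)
Your proof is correct and follows essentially the same route as the paper: both argue (1) $\Rightarrow$ (2) by contradiction, extract subsequences with bounded Gromov product, pass to boundary limits $p,q$, and then use Proposition \ref{prop:null_sets_and_accompany} to derive the contradiction. The only cosmetic difference is that the paper phrases the contradiction as ``$q'\in\accum({\bf x}^1)$ yet $q'\notin\accum({\bf x}^2)$'' while you phrase it as ``$i_{x_0}(p,q)=0$ yet $i_{x_0}(p,q)\ge e^{-2M}>0$''; these are contrapositives of one another via the same (1)$\Leftrightarrow$(3) equivalence.
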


\begin{proof}
From the definition \eqref{eq:set_M},
the condition (2) implies (1).

Suppose the condition (1).
Assume to the contrary that
there is ${\bf z}\in \visualindist({\bf x}^1)\setminus
\visualindist({\bf x}^2)$.
Take subsequences ${\bf z}'
=\{z'_n\}_{n\in \mathbb{N}}
$ of ${\bf z}$
and ${{\bf x}'}^2=\{x'_n\}_{n\in \mathbb{N}}$ of ${\bf x}^2$
such that 
$$
\gromov{x'_n}{z'_n}{x_0}<M_1
$$
for all $n\in \mathbb{N}$.
Then,
any
$q'\in \overline{{\bf z}'}\cap \partial_{GM}\teich(S)$
($\subset \overline{{\bf z}}\cap \partial_{GM}\teich(S)$)
and $p'\in \overline{{{\bf x}'}^2}\cap \partial_{GM}\teich(S)$
($\subset \overline{{\bf x}^2}\cap \partial_{GM}\teich(S)$)
satisfy $i_{x_0}(p',q')\ne 0$
(cf. \eqref{eq:Gromov_product_boundary_continuous}).
By Proposition \ref{prop:null_sets_and_accompany},
$q'\not\in \accum_{S}({\bf x}^2)$.
Since $\accum_{S}({\bf x}^1)\subset \accum_{S}({\bf x}^2)$
from the assumption,
$q'\not\in \accum_{S}({\bf x}^1)$.
On the other hand,
since ${\bf z}\in \visualindist({\bf x}^1)$,
$q'\in \overline{{\bf z}}\cap \partial_{GM}\teich(S)
\subset \accum_{S}({\bf x}^1)$.
This is a contradiction.
%
%
%
\end{proof}

\subsection{Accumulation sets}
Let $\omega\in \AC(\teich(S),\teich(S'))$,
For $p\in \cl{\teich(S)}$,
we define the \emph{accumulation set}
by
$$
\mathcal{A}(\omega\colon p)=\{q\in \cl{\teich(S')}\mid
\mbox{$\exists\{y_n\}_{n\in \mathbb{N}}\in \sq{\teich(S)}$ s.t.
$y_n\to p$ and $\omega(y_n)\to q$}\}.
$$

The following lemma will be applied for defining
the extension to the reduced Gardiner-Masur closure
in \S\ref{subsec:extension_to_the_boundary}.

\begin{lemma}[Null sets and accumulation points]
\label{lem:equivalent_1}
Let $\omega\in \ACAS(\teich(S),\teich(S'))$.
Let $p^{1},p^{2}\in \partial_{GM}\teich(S)$
and $q^{i}\in \mathcal{A}(\omega\colon p_i)$ for $i=1,2$.
If $\nullsets{S}{p^{2}}\subset \nullsets{S}{p^{1}}$,
then $\nullsets{S'}{q^{2}}\subset \nullsets{S'}{q^{1}}$.
Especially,
$\nullsets{S'}{q^{2}}=\nullsets{S'}{q^{1}}$
for $p\in \partial_{GM}\teich(S)$ and $q^{1},q^{2}\in \mathcal{A}(\omega\colon p)$.
\end{lemma}

\begin{proof}
For $i=1,2$,
let ${\bf x}^i$ be a sequence converging to $p_i$
such that $\omega({\bf x}^i)$ converges to $q_i$.
From Proposition \ref{prop:accumulation_point_accompany},
the assumption $\nullsets{S}{p_2}\subset \nullsets{S}{p_1}$
implies $\visualindist({\bf x}^2)\subset \visualindist({\bf x}^1)$.
By Propositions \ref{prop:quasi-surjective_asymptotic},
we have $\visualindist(\omega({\bf x}^2))
\subset \visualindist(\omega({\bf x}^1))$.
Therefore,
by applying Proposition \ref{prop:accumulation_point_accompany} again,
we obtain
$\nullsets{S'}{q_2}\subset \nullsets{S'}{q_1}$.
\end{proof}

\subsection{Reduced Gardiner-Masur closure and boundary}
\label{subsec:definition_reduced}
We say two points $p,q\in \cl{\teich(S)}$ are equivalent if
one of the following holds:
\begin{itemize}
\item[(1)]
$p,q\in \teich(S)$ and $p=q$;
\item[(2)]
$p,q\in \partial_{GM}\teich(S)$
and $\nullsets{S}{p}=\nullsets{S}{q}$.
\end{itemize}
We denote by $\equivalence{p}$ the equivalence class of $p\in \cl{\teich (S)}$.
We abbreviate the equivalence class $\equivalence{[G]}$
of the projective class $[G]\in \mathcal{PMF}\subset\partial_{GM}\teich(S)$ as $\equivalence{G}$. 
We denote by $\clred{\teich(S)}$ the quotient of
$\cl{\teich(S)}$ under this equivalence relation.
Let $\pi_{GM}\colon \cl{\teich(S)}\to \clred{\teich(S)}$ be the quotient map.
We always identify $\pi_{GM}(\teich(S))$ with $\teich(S)$.
We call $\clred{\teich(S)}$ the \emph{reduced Gardiner-Masur closure} of $\teich(S)$.
From the definition,
the space $\clred{\teich(S)}$ contains $\teich(S)$ canonically.
We call the complement
$$
\partialred\teich(S)=\clred{\teich(S)}-\teich(S)
$$
the \emph{reduced Gardiner-Masur boundary} of $\teich(S)$.

The reduced Gardiner-Masur closure
is a variation of the reduced compactifications of Teichm\"uller space.
See \cite{Ohshika2}.

\subsection{Boundary extension}
\label{subsec:extension_to_the_boundary}
For $\omega\in \ACAS(\teich(S),\teich(S'))$,
we define the \emph{boundary extension} 
$\extension{\omega}\colon \clred{\teich(S)}\to \clred{\teich(S')}$
by
\begin{equation}
\label{eq:definition_extension}
\extension{\omega}(\equivalence{p})
=
\begin{cases}
\equivalence{\omega(p)} & (\mbox{$p\in \teich(S)$}) \\
\equivalence{q} & (\mbox{$q\in \mathcal{A}(\omega\colon p)$
if $p\in \partial_{GM}\teich(S)$}) 
\end{cases}
\end{equation}
From Lemma \ref{lem:equivalent_1},
the extension $\extension{\omega}$ is well-defined.

\begin{lemma}[Composition]
\label{lem:composition}
For
$\omega_1,\omega_2\in \ACAS(\teich(S),\teich(S'))$,
the extensions satisify
$$
\extension{\omega_1\circ \omega_2}=\extension{\omega_1}\circ \extension{\omega_2}
$$
on $\partialred{\teich(S)}$.
\end{lemma}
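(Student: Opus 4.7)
The plan is to use the accumulation-point definition of $\extension{\omega}$ in \eqref{eq:definition_extension}, exploiting compactness of $\cl{\teich}$ to choose a single sequence that simultaneously realizes all three extensions. Fix $\equivalence{p} \in \partialred{\teich}$ with $p \in \partial_{GM}\teich$. Choose any sequence $\{y_n\}_{n\in\mathbb{N}}$ in $\teich$ converging to $p$; by compactness of $\cl{\teich}$ I may, after passing to a subsequence, assume $\omega_1\circ \omega_2(y_n) \to q$ for some $q \in \cl{\teich}$. Because $p \in \partial_{GM}\teich$, the sequence $\{y_n\}$ is convergent at infinity, so $\omega_1\circ\omega_2(y_n)$ is also convergent at infinity (since $\omega_1\circ\omega_2 \in \AC(\teich)$ by Proposition \ref{prop:properties_quasimorphisms}), and hence $q \in \partial_{GM}\teich$. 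By definition of $\extension{\omega_1\circ\omega_2}$ this gives $\extension{\omega_1\circ\omega_2}(\equivalence{p}) = \equivalence{q}$.

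Next I would pass to a further subsequence so that $\omega_2(y_n) \to q'$ in $\cl{\teich}$, which is possible by compactness. Again since $\omega_2 \in \ACAS(\teich) \subset \AC(\teich)$ and $\{y_n\}$ converges at infinity, so does $\omega_2(y_n)$, forcing $q' \in \partial_{GM}\teich$. By the definition, $q' \in \mathcal{A}(\omega_2 \colon p)$, so $\extension{\omega_2}(\equivalence{p}) = \equivalence{q'}$. The subsequence is still a sequence converging to $p$ with $\omega_1\circ\omega_2(y_n) \to q$, hence the identity $\extension{\omega_1\circ\omega_2}(\equivalence{p}) = \equivalence{q}$ from the first paragraph remains valid (this uses the well-definedness of the extension furnished by Lemma \ref{lem:equivalent_1}).

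Finally, with both subsequences in hand, $\omega_1(\omega_2(y_n)) = \omega_1\circ\omega_2(y_n) \to q$ while $\omega_2(y_n) \to q'$, so $q \in \mathcal{A}(\omega_1 \colon q')$, and by the definition of $\extension{\omega_1}$ we obtain $\extension{\omega_1}(\equivalence{q'}) = \equivalence{q}$. Chaining the three identities yields
\[
\extension{\omega_1}\circ \extension{\omega_2}(\equivalence{p}) = \extension{\omega_1}(\equivalence{q'}) = \equivalence{q} = \extension{\omega_1\circ\omega_2}(\equivalence{p}),
\]
which is the desired equality on $\partialred{\teich}$.

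The only potentially delicate point is to be sure that the intermediate limit $q'$ lives in $\partial_{GM}\teich$ so that the second branch of the definition \eqref{eq:definition_extension} applies; this is why I would invoke the fact that $\omega_2$, being asymptotically conservative, sends sequences convergent at infinity to sequences convergent at infinity (cf.\ Remark \ref{remark:1}), which rules out accumulation of $\omega_2(y_n)$ at any interior point of $\teich$. Beyond this the argument is purely formal: compactness to extract subsequences, and Lemma \ref{lem:equivalent_1} to ensure that the equivalence class of the chosen accumulation point does not depend on which subsequence one selected.
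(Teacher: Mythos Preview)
Your argument is correct and follows essentially the same route as the paper's own proof: pick a sequence converging to $p$, pass to subsequences so that both $\omega_1\circ\omega_2(y_n)$ and $\omega_2(y_n)$ converge in $\cl{\teich}$, and read off the three extension values from the accumulation-point definition. You are merely more explicit than the paper about why the intermediate limits lie in $\partial_{GM}\teich$ (via Remark~\ref{remark:1}) and about why passing to subsequences is harmless (via the well-definedness supplied by Lemma~\ref{lem:equivalent_1}).
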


\begin{proof}
Let $\equivalence{p}\in \partialred{\teich(S)}$.
Take ${\bf x}=\{x_n\}_{n\in \mathbb{N}}\subset \teich(S)$ such that $x_n \to p$
and
$\omega_1\circ \omega_2(x_n)\to p'\in \partial_{GM}\teich(S')$.
By definition,
$$
\extension{\omega_1\circ\omega_2}(\equivalence{p})=\equivalence{p'}.
$$
On the other hand,
from Proposition \ref{lem:equivalent_1},
we may assume that
$\omega_2({\bf x})$ converges to $q\in \mathcal{A}(\omega_2\colon p)$.
From the definition,
we have $\extension{\omega_2}(\equivalence{p})=\equivalence{q}$.
Since $\omega_1\circ \omega_2({\bf x})
=\omega_1(\omega_2({\bf x}))$,
$p'\in \mathcal{A}(\omega_1\colon q)$ and hence
$$
\equivalence{p'}=
\extension{\omega_1}(\equivalence{q})=
\extension{\omega_1}\circ\extension{\omega_2}(\equivalence{p}).
$$
\end{proof}

\begin{lemma}[Close at infinity]
\label{lem:close-at-infinity}
Let $\omega_{1},\omega_{2}\in \ACAS(\teich(S),\teich(S'))$.
If $\omega_{1}$ is close to $\omega_{2}$ at infinity,
$\extension{\omega_1}=\extension{\omega_2}$
on $\partialred\teich(S)$.
\end{lemma}

\begin{proof}
Let $p\in \partial_{GM}\teich(S)$.
Take ${\bf x}=\{x_{n}\}_{n\in \mathbb{N}}\in \squ{\teich(S)}$
with $x_{n}\to p$ as $n\to\infty$
such that
$\omega_{i}(x_{n})\to q^{i}\in \mathcal{A}(\omega_{i}\colon p)$ for $i=1,2$.
Since $\visualindist(\omega_{1}({\bf x}))=\visualindist(\omega_{2}({\bf x}))$,
by Proposition \ref{lem:equivalent_1},
$$
\nullsets{S'}{q^{1}}=\accum_{S'}(\omega_{1}({\bf x}))=\accum_{S'}(\omega_{2}({\bf x}))
=\nullsets{S'}{q^{2}}.
$$
Hence 
$$
\extension{\omega_1}(\equivalence{p})=
\equivalence{q^{1}}=\equivalence{q^{2}}=
\extension{\omega_2}(\equivalence{p})
$$
and $\extension{\omega_1}=\extension{\omega_2}$ on $\partialred\teich(S)$.
\end{proof}

\begin{corollary}[Inverse]
\label{coro:quasi-inverse}
Let $\omega\in \ACINV(\teich(S),\teich(S'))$
and $\omega'$ be an asymptotic quasi-inverse of $\omega$.
Then,
$\extension{\omega'}\circ \extension{\omega}$
and
$\extension{\omega}\circ \extension{\omega'}
$
are identity mappings
on $\partialred{\teich}(S)$ and $\partialred{\teich}(S')$,
respectively.
\end{corollary}

\section{Rigidity of asymptotically conservative mappings}
\label{sec:Rigidity_asymptotic_teichmuller}

\subsection{Heights of reduced boundary points}
An ordered sequence $\{\equivalence{p_k}\}_{k=1}^m$ in $\partialred\teich(S)$
is said to be an \emph{adherence tower}
starting at $\equivalence{p_1}$ if
$$
\nullsets{S}{p_1}\supsetneqq
\nullsets{S}{p_2}\supsetneqq
\cdots
\supsetneqq
\nullsets{S}{p_m}.
$$
The adherence tower is named with referring 
Ohshika's paper \cite{Ohshika}.
See also Papadopoulos's paper \cite{Papadopoulos}.
We call the number $m$ the \emph{length} of the adherence tower.
Let $\equivalence{p}\in \partialred\teich(S)$.
We define the \emph{height} $\height(\equivalence{p})$ of $\equivalence{p}$
by
$$
\height(\equivalence{p})=\sup\{\mbox{lengths of adherence towers starting $\equivalence{p}$}\}.
$$

For a measured foliation $G$,
we set $\{X_i\}_{i=1}^{m_1}$ be the supports of the minimal components
of $G$.
We define the complexity of $G$ by
\begin{equation}
\label{eq:xi_0}
\xi_0(G)=
\left(
{\color{black}-}\sum_{i=1}^{m_1}\complexity{X_i},
{}^\#\{\mbox{essential curves in $G$}\}
\right)
\in \mathbb{Z}\times \mathbb{Z}.
\end{equation}
(cf. Theorem 1 in \cite{Ohshika}).

\begin{lemma}[Heights of boundary points]
\label{lem:height_boundary_points}
The height of any $\equivalence{p}\in \partialred\teich(S)$
is at most $\complexity{S}$.
The equality $\height(\equivalence{p})=\complexity{S}$
holds if and only if
the support of any $[G]\in \mathcal{AF}(p)$
is a simple closed curve.
\end{lemma}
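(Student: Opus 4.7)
The strategy is to translate adherence towers into strictly refining chains of topological equivalence classes of measured foliations and to bound their length via the complexity of the induced decompositions of the reference surface $X$. By Theorem \ref{thm:associated_foliations_and_null_sets} combined with Corollary \ref{coro:topological_equivalence_null_set}, each class $\equivalence{p_k}$ is uniquely determined by the topological equivalence class of $G_k^\circ$ for any $G_k \in \mathcal{AF}(\Psi_{GM}(p_k))$, and the condition $\mathcal{N}^{\Psi}(p_k) \supsetneq \mathcal{N}^{\Psi}(p_{k+1})$ becomes $\mathcal{N}(G_k) \supsetneq \mathcal{N}(G_{k+1})$. Thus an adherence tower of length $m$ corresponds to a strict chain $G_1^{\circ}, \ldots, G_m^{\circ}$ in the partial order of topological equivalence classes of truncated measured foliations.

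To each $G_k$ I attach the complexity $\xi_0(G_k) = \bigl(-\sum_i \complexity(X_{k,i}),\ m_{2,k}\bigr) \in \mathbb{Z} \times \mathbb{Z}$ from \eqref{eq:xi_0}, where $\{X_{k,i}\}$ and $m_{2,k}$ are the arational supports and the number of essential curves appearing in the canonical decomposition of $X$ provided by the Decomposition Theorem (Theorem \ref{thm:decomposition}). The central step is to verify by case analysis that $\xi_0$ is strictly monotone along the chain with respect to the lexicographic order, by examining the elementary refinements of the decomposition forced by the strict shrinking of null sets: enlarging an arational support, adjoining a new essential curve, or absorbing an essential curve into an arational component that now covers a larger subsurface. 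The uniqueness statements in Claim \ref{claim:essential_curves} and Corollary \ref{coro:uniqueness_associate_foliations} allow consecutive decompositions to be compared canonically.

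Once strict monotonicity is in hand, the bound $m \le \complexity(S)$ follows from the topological constraint $\sum_i \complexity(X_{k,i}) + m_{2,k} \le \complexity(S)$, which reflects the fact that the arational supports together with the essential-curve annuli embed as disjoint essential subsurfaces of $S$. For the equality case, attaining $m = \complexity(S)$ forces every transition in the chain to be of the simplest possible type, namely adjoining a new essential simple closed curve; in turn, this rigidity forces $G_1$ to consist of a single simple closed curve, with the chain building up to a pants decomposition of $S$ as its terminal term.

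The principal difficulty lies in making the strict monotonicity of $\xi_0$ precise, especially for the transition in which an essential curve of $G_k$ is absorbed into an arational foliation on a subsurface of complexity one (a one-holed torus or four-holed sphere sitting inside $S$). The hypothesis that $S$ itself is neither such a surface guarantees that the Decomposition Theorem applies non-degenerately, but a careful choice of the lexicographic order on $\xi_0$ is still needed to detect this boundary case of the transition.
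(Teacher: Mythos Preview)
Your proposal follows essentially the same approach as the paper: translating the adherence tower to a strict chain of null sets $\NMF(G_k)$ via Theorem~\ref{thm:associated_foliations_and_null_sets}, attaching the same complexity invariant $\xi_0$ from \eqref{eq:xi_0}, and arguing strict monotonicity from the decomposition \eqref{eq:truncation_1}--\eqref{eq:truncation_2} of $G_{k+1}^\circ$ relative to $G_k^\circ$. Your derivation of the bound via the additive constraint $\sum_i \complexity(X_{k,i}) + m_{2,k} \le \complexity(S)$ is a slight repackaging of the paper's argument (which instead tracks the two coordinates of $\xi_0$ separately and handles the planar case by noting that arational supports there have $-\chi \ge 2$), and you correctly flag the delicate transition where an essential curve is absorbed as a peripheral curve of a new arational component---a point the paper passes over quickly.
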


\begin{proof}
We first discuss the associated foliations
of points in an adherence tower of length two.
Let $\equivalence{p_1},\equivalence{p_2}\in \partialred\teich(S)$.
Let $[G_i]\in \mathcal{AF}(p_i)$ for $i=1,2$.
Suppose that $\{\equivalence{p_1},\equivalence{p_2}\}$ is an adherence tower.
From the definition,
$\mathcal{N}(\Psi_{GM}(p_1))\supsetneqq \mathcal{N}(\Psi_{GM}(p_2))$.
From Corollary \ref{coro:topological_equivalence_null_set},
we see
$$
\NMF(\trunc{G_1})=
\NMF(\Psi_{GM}(p_1))\supsetneqq \NMF(\Psi_{GM}(p_2))
=\NMF(\trunc{G_2}).
$$
We decompose $\trunc{G_1}$ as in \eqref{eq:decompositionMF}:
\begin{equation}
\label{eq:truncation_1}
\trunc{G_1}=\sum_{i=1}^{m_1}G'_i+\sum_{i=1}^{m_2}\beta_i
\end{equation}
where $G'_i$ is a minimal component,
and $\beta_i$ is a (weighted) essential curve of $G_1$.
Since $G_2\in \NMF(G_1)$,
the decomposition of $\trunc{G_2}$ is represented as 
\begin{equation}
\label{eq:truncation_2}
\trunc{G_2}=\sum_{i=1}^{m_1}H'_i+\sum_{i=1}^{m_2}a_i\beta_i+G_3
\end{equation}
where $H'_i$ is topologically equivalent to $G'_i$,
$a_i\ge 0$ and the support of $G_3$ is contained in the complement
of the support of $\trunc{G_1}$
(At this moment,
$\trunc{G_2}$ may contain curves homotopic to boundary components
of arrational components of $G_1$
as essential curves).
Since $\NMF(G_2)\subset \NMF(G_1)$,
$H'_i\ne 0$ and $a_i\ne 0$.
Moreover,
from the assumption $\NMF(G_2)\ne \NMF(G_1)$
implies that $G_3\ne 0$.
Therefore,
from \eqref{eq:truncation_1} and \eqref{eq:truncation_2},
we have
$$
\xi_0(G_1)<\xi_0(G_2)
$$
in the lexicographical order in $\mathbb{Z}\times \mathbb{Z}$,
since $G_3$ in \eqref{eq:truncation_2}
contains either a minimal component or an essential curve
of $G_2$.
%
%
%
%

Let us return to the proof of the lemma.
Let $\{\equivalence{p_i}\}_{i=1}^m$ be an adherence tower of length $m$.
Let $[G_i]\in \mathcal{AF}(p_i)$.
From the above argument,
we have
\begin{equation}
\xi_0(G_1)<\xi_0(G_2)<\cdots<\xi_0(G_m).
\end{equation}
Since the number of essential curves is at most $\complexity{S}$
and the sum of the first and second coordinates of $\xi_0(G)$
is at most $\complexity{S}$
minus the number of boundary components of minimal foliations of $G$
which are non-periperal in $S$,
we have $m\le \complexity{S}$.
In addition,
%
if $m=\complexity{S}$,
each $G_i$ consists of essential curves.
Hence,
in this case,
the adherence tower starts with a simple closed curve.
\end{proof}

\subsection{Induced isomorphism}
Let $\curvecomplex_0(S)$ be the $0$-skeleton of $\curvecomplex(S)$.
We identify each vertex of $\curvecomplex_0(S)$ 
with its projective class in $\partial_{GM}\teich(S)$.

\begin{theorem}[Induced isomorphism]
\label{thm:induced-isomorphism}
Let $S$ and $S'$ be compact orientable surfaces of non-sporadic type.
For $\omega\in \ACINV(\teich(S),\teich(S'))$,
there is a simplicial isomorphism
$h_{\omega}\colon \curvecomplex(S)\to \curvecomplex(S)$
such that
for
any $\alpha\in \curvecomplex_0(S)$,
and any sequence $\{x_{n}\}_{n}\subset \teich(S)$
with $x_{n}\to [\alpha]$,
we have $\omega(x_{n})\to [h_{\omega}(\alpha)]$.
Furthermore,
When $\omega$ and $\omega'$ are close at infinity,
$h_{\omega}=h_{\omega'}$.
\end{theorem}

\begin{proof}
Let $\omega\in \ACINV(\teich(S),\teich(S'))$
and $\alpha\in \curvecomplex_0(S)$.
From Lemma \ref{lem:height_boundary_points},
there is an adherence tower $\{\equivalence{p_i}\}_{i=1}^{\complexity{S}}$
with $\equivalence{p_1}=\equivalence{\alpha}$.
From Lemma \ref{lem:equivalent_1},
$\{\extension{\omega}(\equivalence{p_i})\}_{i=1}^{\complexity{S}}$
is also an adherence tower starting
$
\extension{\omega}(\equivalence{p_1})
=\extension{\omega}(\equivalence{\alpha})$.
Applying the above argument for asymptotic quasi-inverse of $\omega$,
we see that
the adherence tower
$\{\extension{\omega}(\equivalence{p_i})\}_{i=1}^{\complexity{S}}$
has the maximal height.
From Lemma \ref{lem:height_boundary_points}
and Corollary \ref{coro:quasi-inverse},
we obtain a bijection $h_\omega\colon \curvecomplex_{0}(S)\to \curvecomplex_{0}(S')$
such that
\begin{equation}
\label{eq:definition_h_omega}
\extension{\omega}(\equivalence{\alpha})=\equivalence{h_\omega(\alpha)}.
\end{equation}

Let $\alpha,\beta\in \curvecomplex_0(S)$ with $i(\alpha,\beta)=0$.
Then,
$G=\alpha+\beta\in \mathcal{MF}$ and
$\mathcal{N}(\alpha)\cap \mathcal{N}(\beta)\supset \mathcal{N}(G)$.
Therefore,
$\{\equivalence{\alpha},\equivalence{G}\}$ and
$\{\equivalence{\beta},\equivalence{G}\}$
are adherence towers.
From Lemma \ref{lem:equivalent_1},
$\{\extension{\omega}(\equivalence{\alpha}),\extension{\omega}(\equivalence{G})\}$ and
$\{\extension{\omega}(\equivalence{\beta}),\extension{\omega}(\equivalence{G})\}$
are also adherence towers.
From Theorem \ref{thm:null_set},
there is an $H\in \mathcal{MF}$ such that $\extension{\omega}(\equivalence{G})=\equivalence{H}$.
Since $h_\omega$ is bijective,
$h_\omega(\alpha)$ and $h_\omega(\beta)$ represent
different components of $H$.
Therefore,
$i(h_\omega(\alpha),h_\omega(\beta))=0$.
This means that $h_\omega$ extends
a simplicial isomorphism from $\curvecomplex(S))$ to $\curvecomplex(S')$.
From Lemma \ref{lem:close-at-infinity},
one can easily see that
$h_{\omega'}=h_{\omega}$ when $\omega'$ is close to $\omega$ at infinity.
%

Let ${\bf x}=\{x_n\}_n$ be a sequence in $\teich(S)$ converging to
a simple closed curve $[\alpha]\in \cl{\teich(S)}$.
By \eqref{eq:definition_extension}
and \eqref{eq:definition_h_omega},
any accumulation point $q\in \partial_{GM}\teich(S')$
of a sequence $\omega({\bf x})$
satisfies $\nullsets{S'}{q}=\nullsets{S'}{[h_\omega(\alpha)]}$
from Lemma \ref{lem:equivalent_1}.
Hence $q$ satisfies
$i_{\omega(x_0)}(F,q)=0$
for all $F\in \mathcal{N}_{MF}(h_\omega(\alpha))$
($\subset \mathcal{MF}(S')$).
From Theorem 3 in \cite{Mi2},
we conclude that $q=[h_\omega(\alpha)]$ in $\partial_{GM}\teich(S')$.
This means that $\omega({\bf x})$ converges to
$[h_\omega(\alpha)]$ in $\cl{\teich(S')}$.
\end{proof}

\subsection{Rigidity theorem}

\subsubsection{Actions of extended mapping class group}
\label{subsec:action_of_extended_mapping_class_groups}
The \emph{extended mapping class group} $\MCG^*(S)$  of $S$
is the group of all isotopy classes of homeomorphisms on $S$.
The extended mapping class group $\MCG^*(S)$  acts on $\teich(S)$ isometrically
by 
$$
\teich(S)\ni y=(Y,f)\mapsto [h]_*(y)=(Y,f\circ h^{-1})\in \teich(S)
$$
for $[h]\in \MCG^*(S)$.
Hence,
we have a group homomorphism
\begin{equation}
\mathcal{I}_0\colon \MCG^*(S)\ni [h]\to [h]_*\in {\rm Isom}(\teich(S)),
\end{equation}
where ${\rm Isom}(\teich(S))$ is the group of all isometries of $\teich(S)$.

Let $\curvecomplex(S)$ be the complex of curves of $S$
and ${\rm Aut}(\curvecomplex(S))$ be the simplicial automorphisms on 
$\curvecomplex(S)$.
Since $\MCG^*(S)$ acts on $\curvecomplex (S)$ canonically,
we have a (group) homomorphism
\begin{equation}
\label{eq:homo_from_MCG_to_Aut}
\mathcal{J}\colon \MCG^*(S)\to {\rm Aut}(\curvecomplex (S)).
\end{equation}
It is known that $\mathcal{J}$ is an isomorphism
if $S$ is neither a torus with two holes nor
a closed surface of genus $2$,
and an epimorphism if 
$S$ is not a torus with two holes
(cf. Ivanov \cite{Ivanov2},
Korkmaz \cite{Korkmaz} and
Luo \cite{Luo}).

The action of any isometry on $\teich(S)$ extends homeomorphically
to the Gardiner-Masur boundary (cf. \cite{LiuSu}).
We can observe
that the extension of the action leaves
$\mathcal{S}\subset \partial_{GM}\teich(S)$ invariant,
and it induces a canonical homomorphism
$\mathcal{J}_1\colon {\rm Isom}(\teich(S))\to {\rm Aut}(\curvecomplex (S))$
such that
the diagram
$$
\xymatrix{
\MCG^*(S)
\ar[r]^{\mathcal{I}_0} \ar[dr]_{\mathcal{J}} & {\rm Isom}(\teich(S)) \ar[d]^{\mathcal{J}_1} \\
&  {\rm Aut}(\curvecomplex (S)) &  \\
}
$$
is commutative (cf. \cite{Mi5}).
The homomorphism $\mathcal{J}_1$ is an isomorphism for any $S$
with $\complexity{S}\ge 2$
(cf. \cite{Ivanov2}).
%
The reason why \eqref{eq:homo_from_MCG_to_Aut} is not surjective
when $S$ is a torus with two holes is that there is no homeomorphism on $S$
which sends a non-null-homologous curve to a null-homologous curve,
while each curve on $S'$ is null-homologous.
Thus,
in any case,
the homomorphism $\mathcal{J}_1$ is surjective
(cf. \cite{Luo}).

\subsubsection{Rigidity theorem}
Recall that any isometry is an invertible asymptotically conservative mapping.
Hence,
we have a monoid monomorphism
$$
\mathcal{I}\colon {\rm Isom}(\teich(S))\hookrightarrow \ACINV(\teich(S)).
$$
Our rigidity theorem is given as follows.

\begin{theorem}[Rigidity theorem]
\label{thm:induced_automorphism}
There is a monoid epimorphism
$$
\Xi\colon \ACINV(\teich(S))\to {\rm Aut}(\curvecomplex(S))
$$
with the following properties:
\begin{itemize}
\item[{\rm (1)}]
If $\omega'\in \ACINV(\teich(S))$ is an asymptotic quasi-inverse of
$\omega\in \ACINV(\teich(S))$,
$\Xi(\omega')=\Xi(\omega)^{-1}$;
\item[{\rm (2)}]
$\mathcal{J}_1=\Xi\circ \mathcal{I}$ as monoid homomorphisms.
\end{itemize}
In addition,
$\Xi$ descends to a group isomorphism
\begin{equation}
\label{eq:isomorphism}
\ACGroup(\teich(S))\to {\rm Aut}(\curvecomplex(S)).
\end{equation}
which satisfies the following commutative diagram:
$$
\xymatrix{
\MCG^*(S)
\ar[r]^{\mathcal{I}_0} &
{\rm Isom}(\teich(S))
\ar[r]^{\mathcal{I}} \ar[dr]^{\mbox{{\tiny group iso}}}
& \ACINV(\teich(S)) \ar[d]^{\mbox{{\tiny proj}}} \ar[dr]^{\Xi}& \\
& &\ACGroup(\teich(S))  \ar[r]^{\mbox{{\tiny group iso}}}
&{\rm Aut}(\curvecomplex(S)). &
}
$$
\end{theorem}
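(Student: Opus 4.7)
The strategy is to extract $\Xi(\omega)$ from the boundary extension $\extension{\omega}$ on the reduced Gardiner-Masur closure $\clred{\teich}$ introduced in \S\ref{subsec:extension_to_the_boundary}. For $\omega \in \ACINV(\teich)$, Proposition \ref{prop:invertible_means_surjectivity} places $\omega$ in $\ACAS(\teich)$, so $\extension{\omega}$ is defined, and Lemma \ref{lem:quasi-inverse} applied to an asymptotic quasi-inverse shows that $\extension{\omega}$ is a bijection of $\partialred\teich$. Applying Lemma \ref{lem:equivalent_1} to both $\omega$ and its quasi-inverse, $\extension{\omega}$ also preserves the partial order on $\partialred\teich$ given by reverse inclusion of null sets $\mathcal{N}^\Psi$, and therefore preserves the height function.

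By Lemma \ref{lem:height_boundary_points} together with Corollary \ref{coro:topological_equivalence_null_set}, the points of maximal height $\complexity(S)$ in $\partialred\teich$ correspond bijectively with $\mathcal{S}$ via $\alpha\mapsto\equivalence{\alpha}$, so $\extension{\omega}$ restricts to a bijection $\Xi(\omega)\colon\mathcal{S}\to\mathcal{S}$. To verify that $\Xi(\omega)$ is simplicial, the plan is to establish the following purely order-theoretic characterization of the edge relation of $\curvecomplex(S)$: two distinct $\alpha,\beta\in\mathcal{S}$ are disjoint if and only if there exists $\equivalence{p}\in\partialred\teich$ with $\mathcal{N}^\Psi(p)\subsetneq\mathcal{N}^\Psi([\alpha])$ and $\mathcal{N}^\Psi(p)\subsetneq\mathcal{N}^\Psi([\beta])$. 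The forward direction takes $p=[\alpha+\beta]$ and uses the strict $\xi_0$-inequality from the proof of Lemma \ref{lem:height_boundary_points}; the converse applies the structural analysis of that proof (with $G_1=\alpha$ and then $G_1=\beta$) to force $\alpha$ and $\beta$ both to appear as essential curves of $\trunc{G_p}$ for any $[G_p]\in\mathcal{AF}(\Psi_{GM}(p))$, whence they are mutually disjoint by the normal form. Since $\extension{\omega}$ preserves the partial order, it preserves this characterization, so $\Xi(\omega)\in{\rm Aut}(\curvecomplex(S))$.

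The monoid homomorphism property $\Xi(\omega_1\circ\omega_2)=\Xi(\omega_1)\circ\Xi(\omega_2)$ is immediate from Lemma \ref{lem:composition}, and $\Xi(\omega')=\Xi(\omega)^{-1}$ is immediate from Lemma \ref{lem:quasi-inverse}. For an isometry $g\in{\rm Isom}(\teich)$, the canonical homeomorphic extension to $\partial_{GM}\teich$ recalled in \S\ref{subsec:action_of_extended_mapping_class_groups} agrees on $\mathcal{S}$ with $\extension{\mathcal{I}(g)}$, yielding $\Xi\circ\mathcal{I}=\mathcal{J}_1$, and surjectivity of $\Xi$ is inherited from that of $\mathcal{J}_1$. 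Two mappings close at infinity produce the same boundary extension on $\partialred\teich$ (combine Definition \ref{def:closeness_at_infinity} with Proposition \ref{prop:accumulation_point_accompany}), so $\Xi$ descends to a well-defined monoid epimorphism on $\ACGroup(\teich)$.

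The main obstacle will be the injectivity of the descent, that is, showing $\Xi(\omega)={\rm id}$ forces $\omega$ to be close at infinity to the identity, equivalently $\extension{\omega}={\rm id}$ on all of $\partialred\teich$ (and not merely on the height-$\complexity(S)$ part $\mathcal{S}$). Because $\omega$ is asymptotically conservative, $\extension{\omega}$ preserves the zero-intersection relation $i_{x_0}(p,q)=0$, so fixing $\mathcal{S}$ determines the set of simple closed curves inside every null set, and Theorem \ref{thm:decomposition} then recovers both the vanishing surface and the arational supports of $\trunc{G_p}$ for every $\equivalence{p}$. The residual freedom lies in the topological type of each arational component on its support $X_i$; ruling this out requires exploiting the finer partial-order structure of $\partialred\teich$ from \S\ref{sec:Null_sets_section}, where topologically inequivalent arational foliations on the same $X_i$ are distinguished by their order relations to classes of the form $[A+\beta]$ with $\beta$ an essential curve in the complement of $X_i$. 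Once this is in hand, commutativity of the diagram in the statement is immediate and the group isomorphism $\ACGroup(\teich)\to{\rm Aut}(\curvecomplex(S))$ follows.
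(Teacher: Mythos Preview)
Your construction of $\Xi$ and the verification of properties (1), (2), surjectivity, and descent to $\ACGroup(\teich)$ follow the paper's proof closely and are correct. The characterization of the edge relation via a common lower bound in the null-set order is essentially the paper's argument with $G=\alpha+\beta$, stated a bit more explicitly.

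The gap is in your injectivity argument. You correctly identify that fixing $\mathcal{S}$ and preserving the order recovers the vanishing surface and the supports $X_i$ of the arational components, leaving only the topological type of each arational piece undetermined. But your proposed remedy---distinguishing inequivalent arational $A,A'$ on the same $X_i$ via order relations to classes $\equivalence{A+\beta}$---is circular: to use such a witness you would need to know that $\extension{\omega}$ fixes $\equivalence{A+\beta}$, which is precisely what is in question. In fact the partial order on $\partialred\teich$ together with the pointwise fixing of $\mathcal{S}$ does not by itself exclude a nontrivial permutation of the (uncountably many) arational classes on each $X_i$, consistently extended to $\equivalence{A+\beta}\mapsto\equivalence{A'+\beta}$; nothing in \S\ref{sec:Null_sets_section} rules this out order-theoretically.

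The paper closes this gap by a different mechanism that uses more than the order: it first proves (Lemma~\ref{lem:rigidity_1}) that $\mathcal{A}(\omega\colon[\alpha])=\{[h_\omega(\alpha)]\}$ is a \emph{singleton} in $\partial_{GM}\teich$ for every $\alpha\in\mathcal{S}$, invoking a boundary result from \cite{Mi2}. Then, choosing a homeomorphism $f_\omega$ realizing $h_\omega$ (via Ivanov--Korkmaz--Luo), it approximates each arational component $G_i$ by simple closed curves $\alpha_n\subset X_i$ with $[\alpha_n]\to[G_i]$ in $\cl{\teich}$, and runs a diagonal argument to produce a sequence $\mathbf{z}\to[G_i]$ with $\omega(\mathbf{z})\to[f_\omega(G_i)]$ (Proposition~\ref{prop:rigidity_2}). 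This continuity/approximation step is what pins down the arational pieces; it is not available from the order alone. Injectivity then follows via Proposition~\ref{prop:rigidity_3}, which upgrades this to the statement that every $\omega\in\ACINV(\teich)$ is close at infinity to a unique isometry.
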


\begin{proof}
When $S$ is a torus with two holes,
the quotient map $S\to S'$ by the hyper-elliptic action
induces an isometry between
the Teichm\"uller spaces of $S$ and $S'$
and an isomorphism between $\curvecomplex(S)$ and $\curvecomplex(S')$,
where $S'$ is a sphere with five holes
(cf. \cite{EK} and \cite{Luo}).
Hence,
we may assume that $S$ is not a torus with two holes.
For $\omega\in \ACINV(\teich(S))$,
we take $h_{\omega}\in {\rm Aut}(\curvecomplex(S))$
as Theorem \ref{thm:induced-isomorphism}.
Define a homomorphism $\Xi$ by $\Xi(\omega)=h_\omega$.
Theorem \ref{thm:induced-isomorphism}
asserts that
$\Xi$ satisfies
the condition (1) in the statement
and 
descends to a homomorphism
\begin{equation}
\label{eq:equation_2}
\ACGroup(\teich(S))\ni [\omega]\mapsto \Xi(\omega)=
h_\omega\in {\rm Aut}(\curvecomplex(S)).
\end{equation}

We next check the condition (2) in the statement.
Since $\omega\in {\rm Isom}(\teich(S))$
preserves $\mathcal{S}$ in
$\mathcal{PMF}\subset\partial_{GM}\teich(S)$,
from the definition of $h_\omega$,
for any $\alpha\in \mathcal{S}$,
$\Xi(\omega)(\alpha)$ coincides with $\omega(\alpha)$
(cf. \S9 in \cite{Mi5}).
This means that $\mathcal{J}(\omega)=\Xi\circ \mathcal{I}(\omega)$.

We here check \eqref{eq:isomorphism} is an epimorphism.
%
Since $S$ is not a torus with two holes,
$\mathcal{J}$ is an epimorphism,
and so are $\Xi$ and \eqref{eq:equation_2}.
The injectivity of \eqref{eq:isomorphism} (or \eqref{eq:equation_2})
is proven in the next section.
\end{proof}

\subsection{Injectivity of homomorphism}
In this section,
we shall show that
the epimorphism \eqref{eq:isomorphism} is an isomorphism.
We first check the following.

\begin{proposition}
\label{prop:rigidity_2}
Suppose that $S$ is not a torus with two holes.
For $\omega\in \ACINV(\teich(S))$,
there is a homeomorphism $f_\omega$ of $S$ with the following property:
For any $p\in \partial_{GM} \teich(S)$,
$[G]\in \mathcal{AF}(p)$
and $q\in \mathcal{A}(\omega\colon p)$,
we have $\nullsets{S}{q}=\nullsets{S}{[f_\omega(G)]}$.
\end{proposition}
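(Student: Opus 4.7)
The plan is to realize the simplicial automorphism $\Xi(\omega)=h_\omega$ by an actual homeomorphism of $S$ and then verify the null-set preservation. Under the standing hypothesis that $S$ is not a torus with two holes, the Ivanov--Korkmaz--Luo epimorphism $\mathcal{J}\colon\MCG^*(S)\to{\rm Aut}(\curvecomplex(S))$ is surjective, so $h_\omega=\Xi(\omega)$ lifts to a homeomorphism $f_\omega$ of $S$ satisfying $f_\omega(\alpha)=h_\omega(\alpha)$ for every $\alpha\in\mathcal{S}$. This is the only candidate; I will show it works.

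By Theorem \ref{thm:associated_foliations_and_null_sets}, $\mathcal{N}^\Psi(q)=\mathcal{N}^\Psi([H])$ for any $[H]\in\mathcal{AF}(\Psi_{GM}(q))$, so by Corollary \ref{coro:topological_equivalence_null_set} it suffices to show that $\trunc{H}$ and $\trunc{f_\omega(G)}$ are topologically equivalent. The combinatorial half of this is immediate: combining Lemma \ref{lem:rigidity_1}(2) with $f_\omega|_{\mathcal{S}}=h_\omega$ gives $[\alpha]\in\mathcal{N}^\Psi(p)$ if and only if $[f_\omega(\alpha)]\in\mathcal{N}^\Psi(q)$, which translates, since $f_\omega$ permutes $\mathcal{S}$, to $\NMF(H)\cap\mathcal{S}=\NMF(f_\omega(G))\cap\mathcal{S}$. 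Feeding this into Theorem \ref{thm:decomposition} forces the vanishing surfaces, the sets of essential curves, the peripheral curves, and the supports of arational components of $H$ and $f_\omega(G)$ all to coincide in $S$.

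The remaining and main obstacle is to show that on each common arational support $X_i'\subset S$, the arational component $H_i$ of $H$ on $X_i'$ is topologically equivalent to $f_\omega(G_i)$, where $G_i$ is the arational component of $G$ on $X_i=f_\omega^{-1}(X_i')$. Vanishing simple closed curve data alone cannot separate inequivalent arational foliations sharing a support, so more of $\omega$ than its action on $\mathcal{S}$ must be used. The strategy is to apply Lemma \ref{lem:equivalent_1} to the inclusion $\mathcal{N}^\Psi(p)\subset\mathcal{N}^\Psi([G_i])$ (which follows from $i(G_i,G)=0$ via Proposition \ref{prop:accumulation_point_accompany}) and conclude $\mathcal{N}^\Psi(q)\subset\mathcal{N}^\Psi(q')$ for any $q'\in\mathcal{A}(\omega\colon[G_i])$. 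The combinatorial half, applied now to the pair $([G_i],q')$, pins down any associated foliation of $q'$ to have a unique arational component $F_i$ supported on $f_\omega(X_i)$; the inclusion $\mathcal{N}^\Psi(q)\subset\mathcal{N}^\Psi(q')$ then gives $i(H,F_i)=0$, forcing $H_i$ to be topologically equivalent to $F_i$ by Theorem \ref{thm:decomposition}(2). It remains to identify $F_i$ with $f_\omega(G_i)$ up to topological equivalence; I would do this by approximating $[G_i]\in\mathcal{PMF}$ by weighted multicurves $[c_n]$ on $X_i$ whose supports fill $X_i$, observing via Lemma \ref{lem:rigidity_1}(1) that $\omega$ sends $[c_n]$ to $[f_\omega(c_n)]$, and passing to the limit using Proposition \ref{prop:null_sets_and_accompany} together with the continuity of the intersection number (Theorem \ref{thm:main_realization}). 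Combined with the matching combinatorial data from the previous paragraph, this yields topological equivalence of $\trunc{H}$ and $\trunc{f_\omega(G)}$, completing the proof.
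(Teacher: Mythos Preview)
Your proposal is essentially correct and follows the same route as the paper: realize $h_\omega$ by a homeomorphism $f_\omega$, match the supports and essential curves of $\trunc{H}$ and $\trunc{f_\omega(G)}$ via Lemma~\ref{lem:rigidity_1}(2), and then handle each arational support $X_i$ by producing a point of $\mathcal{A}(\omega\colon[G_i])$ equal to $[f_\omega(G_i)]$ and invoking Lemma~\ref{lem:equivalent_1}. Two small points to tighten. First, in your approximation step use simple closed curves $\alpha_n\subset X_i$ with $[\alpha_n]\to[G_i]$ rather than filling multicurves: Lemma~\ref{lem:rigidity_1}(1) is stated and proved only for $\alpha\in\mathcal{S}$, and indeed fails verbatim for multicurves (different weightings of the same multicurve give distinct points of $\mathcal{PMF}$ with identical null sets, so $\mathcal{A}(\omega\colon[c])$ need not be a singleton). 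Second, ``passing to the limit'' here is not a consequence of Proposition~\ref{prop:null_sets_and_accompany} or continuity of $i_{x_0}$; what is needed is a diagonal argument: choose $\mathbf{x}^n\to[\alpha_n]$ with $\omega(\mathbf{x}^n)\to[f_\omega(\alpha_n)]$ (Lemma~\ref{lem:rigidity_1}(1)), then use metrizability of $\cl{\teich}$ to extract $z_n=x^n_{m(n)}$ with $z_n\to[G_i]$ and $\omega(z_n)\to[f_\omega(G_i)]$, which gives $[f_\omega(G_i)]\in\mathcal{A}(\omega\colon[G_i])$ directly. Once you have this, your intermediate $q'$ and $F_i$ are unnecessary: Lemma~\ref{lem:equivalent_1} applied to $\mathcal{N}^\Psi(p)\subset\mathcal{N}^\Psi([G_i])$ with $q'=[f_\omega(G_i)]$ yields $\mathcal{N}^\Psi(q)\subset\mathcal{N}^\Psi([f_\omega(G_i)])$ immediately, and the rest of your argument goes through.
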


\begin{proof}
From the assumption and Theorem \ref{thm:induced_automorphism},
there is a homeomorphism $f_\omega$ of $S$ such that
$h_\omega(\alpha)=f_\omega(\alpha)$.
From Theorem \ref{thm:null_set},
if we take $[H]\in \mathcal{AF}(q)$,
then
$$
\nullsets{S}{q}=\nullsets{S}{[H]}.
$$
From Theorem \ref{thm:induced-isomorphism},
for any $\alpha\in \mathcal{S}$,
$i(G,\alpha)=0$ if and only if
$[f_\omega(\alpha)]=[h_\omega(\alpha)]\in \nullsets{S}{q}$.
Hence,
we deduce that
\begin{equation}
\label{eq:simple_closed_curves_1}
\nullsets{S}{[H]}\cap \mathcal{S}=
\nullsets{S}{q}\cap \mathcal{S}=\nullsets{S}{[f_\omega(G)]}\cap \mathcal{S}
\end{equation}
where $\mathcal{S}$ stands for a subset of $\partial_{GM}\teich(S)$
in \eqref{eq:simple_closed_curves_1}.
Therefore,
the support of $\trunc{H}$ coincides with the support of
$\trunc{f_\omega(G)}$.
In particular,
any essential curve of $H$ is also that of $f_\omega(G)$, and vice versa.
As \eqref{eq:decompositionMF},
we decompose $G$ as
$$
G=G_1+G_2+\cdots+G_{m_1}+\beta_1+\cdots \beta_{m_2}
+\gamma_1+\cdots+\gamma_{m_3}.
$$
Let  $X_i$ be the support of a minimal component $G_i$ of $G$.

It is known that  $\cl{\teich(S)}$ is metrizable.
For instance
\begin{equation}
\label{eq:metric-cl-GM}
d_\infty(p^1,p^2)=\sup_{p\in \partial_{GM}\teich(S)}
\left|
i_{x_0}(p^1,p)-i_{x_0}(p^2,p)
\right|
\end{equation}
is a metric on $\cl{\teich(S)}$
since $\mathcal{S}\subset \partial_{GM}\teich(S)$
(cf. Theorem 1.2 in \cite{Mi1}).

Fix $i=1,\cdots,k$.
Take a sequence $\{\alpha_n\}_{n\in \mathbb{N}}\subset \mathcal{S}$ such that
$\alpha_n\subset X_i$ and
$$
d_\infty([\alpha_n],[G_i])<1/n.
$$
Since $f_\omega$ is a homeomorphism,
$[f_\omega(\alpha_n)]$ tends to $[f_\omega(G_i)]$
in $\mathcal{PMF}$ (and hence in $\cl{\teich(S)}$)
as $n\to \infty$.
By taking a subsequence,
we may assume that
$$
d_\infty([f_\omega(\alpha_n)],[f_\omega(G_i)])<1/n
$$
for all $n\in \mathbb{N}$.

Let ${\bf x}^n=\{x^n_m\}_{m\in \mathbb{N}}$ be a sequence in $\teich(S)$
converging to $[\alpha_n]$ in $\cl{\teich(S)}$.
Since $\omega\in \ACINV(\teich (S))$,
by Theorem \ref{thm:induced-isomorphism},
$\omega({\bf x}^n)$ converges to $[f_\omega(\alpha_n)]$
in $\cl{\teich(S)}$
for all $n$.
By applying the diagonal argument and taking a subsequence if necessary,
we can take $m(n)\in \mathbb{N}$
such that
if we put $z_n=x^n_{m(n)}$ and ${\bf z}=\{z_n\}_{n\in \mathbb{N}}$,
then
\begin{equation}
\label{eq:prood_rigidity_2}
\max\{d_\infty(z_n,[G_i]),
d_\infty(\omega(z_n),[f_\omega(\alpha_n)])\}<2/n
\end{equation}
in $\cl{\teich(S)}$.
Since $f_\omega$ is a homeomorphism of $S$,
$[f_\omega(\alpha_n)]$ tends to $[f_\omega(G_i)]$
in $\mathcal{PMF}$ and hence in $\cl{\teich(S)}$.
From \eqref{eq:prood_rigidity_2},
we have
$$
d_\infty(\omega(z_n),f_\omega([G_i]))
\le d_\infty(\omega(z_n),f_\omega([\alpha_n]))
+d_\infty(f_\omega([\alpha_n]),[f_\omega(G_i)])
\to 0
$$
as $n\to \infty$.
Therefore $\omega({\bf z})$ converges to $[f_\omega(G_i)]$.
This means that
$[f_\omega(G_i)]\in \mathcal{A}(\omega\colon [G_i])$.
Since $G_i$ is a minimal component of $G$,
$\nullsets{S}{p}=\nullsets{S}{[G]}\subset \nullsets{S}{[G_i]}$.
Therefore,
by Lemma \ref{lem:equivalent_1},
we conclude
$$
\nullsets{S}{[H]}=
\nullsets{S}{q}\subset \nullsets{S}{[f_\omega(G_i)]}
$$
since $q\in \mathcal{A}(\omega\colon p)$.
Therefore,
$H$ contains a minimal component $H_i$
which is topologically equivalent to
$f_\omega(G_i)$.
Since the support of $\trunc{H}$ coincides with that of $\trunc{G}$,
minimal components of $H$ are contained in
$\cup f_\omega(X_i)$.
Hence,
the normal form of $H$ should be
$$
H=\sum_{i=1}^{m_1}H_i+\sum_{i=1}^{m_2}a_if_\omega(\beta_i)
+\sum_{i=1}^{m_1}\sum_{\gamma\subset \partial f_\omega(X_i)}b_\gamma \gamma
$$
where $a_i>0$ and $b_\gamma\ge 0$.
Thus,
$\trunc{H}$ is topologically equivalent to $\trunc{f_\omega(G)}$.
Hence
by Corollary \ref{coro:topological_equivalence_null_set},
we deduce 
$$
\nullsets{S}{[f_\omega(G)]}=
\nullsets{S}{[H]}=\nullsets{S}{q},
$$
which is what we desired.
\end{proof}

\begin{proposition}[Induced isometry]
\label{prop:rigidity_3}
For any $\omega\in \ACINV(\teich(S))$,
there is a unique isometry $\xi_\omega$ on $\teich(S)$
which is close to $\omega$ at infinity.
\end{proposition}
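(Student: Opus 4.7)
The plan is to exhibit the isometry $\xi_\omega$ explicitly and then verify that it is asymptotically close to $\omega$. By Proposition \ref{prop:rigidity_2} there is a homeomorphism $f_\omega$ of $S$ whose induced action on the complex of curves agrees with $\Xi(\omega)=h_\omega$, and by Theorem \ref{thm:induced_automorphism} together with the isomorphism $\mathcal{J}_1$ of \S\ref{subsec:action_of_extended_mapping_class_groups} the natural choice is
\[
\xi_\omega:=\mathcal{I}_0([f_\omega])\in{\rm Isom}(\teich).
\]
This is the unique isometry with $\mathcal{J}_1(\xi_\omega)=h_\omega$, so the content of the proposition reduces to closeness at infinity between $\omega$ and $\xi_\omega$, and to the uniqueness of any such isometry among all of ${\rm Isom}(\teich)$.

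To check closeness, I would first verify that the reduced-boundary extensions $\extension{\omega}$ and $\extension{\xi_\omega}$ agree on $\partialred\teich$. Fix $\equivalence{p}\in\partialred\teich$ and $[G]\in\mathcal{AF}(\Psi_{GM}(p))$. Proposition \ref{prop:rigidity_2} gives $\mathcal{N}^\Psi(\extension{\omega}(\equivalence{p}))=\mathcal{N}^\Psi([f_\omega(G)])$. For the isometry $\xi_\omega$, the $\MCG^*(S)$-equivariance of the exponential map \eqref{eq:exponential_maps} and of the Gardiner-Masur embedding ensures $[f_\omega(G)]\in\mathcal{AF}(\Psi_{GM}(\xi_\omega(p)))$; combining Theorem \ref{thm:associated_foliations_and_null_sets} with Corollary \ref{coro:topological_equivalence_null_set} then yields $\mathcal{N}^\Psi(\xi_\omega(p))=\mathcal{N}^\Psi([f_\omega(G)])$ as well, so the two extensions coincide.

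Next I would promote this equality to closeness at infinity. For asymptotic sequences ${\bf x}^1,{\bf x}^2$, Proposition \ref{prop:null_sets_and_accompany} supplies $G_0\in\mathcal{MF}$ with $\accum({\bf x}^1)=\accum({\bf x}^2)=\mathcal{N}^\Psi([G_0])$, and $\trunc{G_0}$ captures, up to topological equivalence, all of the associated foliations arising at accumulation points of either sequence. Applying the previous paragraph at each such accumulation point, and using invariance of the intersection number on $\mathcal{MF}$ under $\MCG^*(S)$, the accumulation sets of $\omega({\bf x}^1)$ and of $\xi_\omega({\bf x}^2)$ are both described by foliations topologically equivalent to $f_\omega(G_0)$; a second appeal to Proposition \ref{prop:null_sets_and_accompany} and Corollary \ref{coro:topological_equivalence_null_set} identifies them with the common null set $\mathcal{N}^\Psi([f_\omega(G_0)])$. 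Proposition \ref{prop:accumulation_points_and_accompany} then upgrades this equality of $\accum$ sets to $\visualindist(\omega({\bf x}^1))=\visualindist(\xi_\omega({\bf x}^2))$, which is precisely closeness at infinity.

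For uniqueness, if $\xi,\xi'\in{\rm Isom}(\teich)$ are both close to $\omega$ at infinity, transitivity of closeness on $\ACAS(\teich)$ (Proposition \ref{prop:GP_close_equivalence_relation}) makes them close to each other, so the construction of $\Xi$ forces $h_\xi=h_{\xi'}$; equivalently $\mathcal{J}_1(\xi)=\mathcal{J}_1(\xi')$, and the injectivity of $\mathcal{J}_1$ recalled in \S\ref{subsec:action_of_extended_mapping_class_groups} yields $\xi=\xi'$. I expect the main technical obstacle to be the computation of the action of the continuous extension of $\xi_\omega$ to $\cl{\teich}$ on associated foliations of arbitrary Gardiner-Masur boundary points, which requires the full $\MCG^*(S)$-equivariance of the Gardiner-Masur embedding rather than only the familiar invariance of $\mathcal{PMF}$.
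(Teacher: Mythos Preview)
Your approach is essentially the paper's own: define $\xi_\omega$ via the homeomorphism $f_\omega$ of Proposition~\ref{prop:rigidity_2}, then compare $\accum(\omega({\bf x}^1))$ and $\accum(\xi_\omega({\bf x}^2))$ to a common null set $\mathcal{N}^\Psi([f_\omega(G_0)])$ using Propositions~\ref{prop:null_sets_and_accompany} and~\ref{prop:accumulation_points_and_accompany}. The paper organises the computation as a direct double inclusion (Claim~\ref{claim:GH_1H_2}) rather than first matching the reduced-boundary extensions, but the content is the same; your added uniqueness paragraph via injectivity of $\mathcal{J}_1$ is correct and in fact makes explicit something the paper leaves implicit.

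One genuine omission: Proposition~\ref{prop:rigidity_2}, which your argument invokes at the outset, carries the standing hypothesis that $S$ is not a torus with two holes, whereas Proposition~\ref{prop:rigidity_3} makes no such restriction. You should note, as the paper does, that the torus-with-two-holes case follows by transporting the problem along the isometry $\teich(S)\to\teich(S')$ induced by the hyperelliptic quotient $S\to S'$ onto a sphere with five holes. Apart from this exceptional case your proof is complete, though the passage in your third paragraph (from knowing $\extension{\omega}(\equivalence{p})=\equivalence{f_\omega(G_p)}$ at each accumulation point $p$ of ${\bf x}^1$ to identifying $\accum(\omega({\bf x}^1))$ with $\mathcal{N}^\Psi([f_\omega(G_0)])$) would benefit from spelling out that every accumulation point of $\omega({\bf x}^1)$ lies in some $\mathcal{A}(\omega\colon p)$ and conversely, so that the components of the foliation $H_1$ furnished by Proposition~\ref{prop:null_sets_and_accompany} match those of $f_\omega(G_0)$.
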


\begin{proof}
The case where $S$ is a torus with two holes
follows from the fact that
 the Teichm\"uller space of $S$ is isometric to 
the Teichm\"uller space of a sphere with five holes.
Hence,
we may suppose that $S$ is not a torus with two holes.

Take $f_\omega$ as in Proposition \ref{prop:rigidity_2}.
Since $f_\omega$ is a homeomorphism of $S$,
$f_\omega$ induces an isometry $\xi_\omega$ on $\teich(S)$.
When $S$ is a closed surface of genus $2$,
there is an ambiguity of the choice of $f_\omega$
which is caused by the hyperelliptic involution.
However,
the isometry $\xi_\omega$ is independent of the choice.

Let ${\bf x}^1$, ${\bf x}^2\in \sq{\teich(S)}$
satisfying $\visualindist({\bf x}^1)=\visualindist({\bf x}^2)$.
From Proposition \ref{prop:null_sets_and_accompany},
there are $G$,
$H_1$,
$H_2\in \mathcal{MF}$ such that
\begin{align*}
\nullsets{S}{[G]} &
=\accum_{S}({\bf x}^1)=\accum_{S}({\bf x}^2)\\
\nullsets{S}{[H_1]}
&=\accum_{S}(\omega({\bf x}^1)) \\
\nullsets{S}{[H_2]}
&=\accum_{S}(\xi_\omega({\bf x}^2)).
\end{align*}
Hence,
our assertion follows from
Proposition \ref{prop:accumulation_points_and_accompany}
and the following lemma.
\end{proof}

\begin{lemma}
\label{lem:GH_1H_2}
It holds
$$
\nullsets{S}{[H_1]}=\nullsets{S}{[f_\omega(G)]}=
\nullsets{S}{[H_2]}.
$$
\end{lemma}

\begin{proof}
Let $\overline{{\bf w}}\in \visualindist(\omega({\bf x}^1))$
and $q\in \overline{{\bf w}}\cap\partial_{GM}\teich(S)$.
Let $p\in \overline{{\bf x}^1}\cap \partial_{GM}\teich(S)$
and fix $[G_p]\in \mathcal{AF}(p)$.
From Proposition \ref{prop:rigidity_2},
$i_{x_0}(q,[f_\omega(G_p)])=0$.
Since $p$ is taken arbitrarily in $\overline{{\bf x}^1}\cap \partial_{GM}\teich(S)$,
from the proof of Proposition \ref{prop:null_sets_and_accompany},
we have $i_{x_0}(q,[f_\omega(G)])=0$.
Hence
\begin{equation}
\label{eq:proof_of_claim:GH_1H_2-1}
\nullsets{S}{[H_1]}
\subset \nullsets{S}{[f_\omega(G)]}.
\end{equation}

Let $q'\in \nullsets{S}{[f_\omega(G)]}$.
For $q\in \overline{\omega({\bf x}^1)}\cap \partial_{GM}\teich(S)$,
we take a subsequence ${\bf z}$ of ${\bf x}^1$
such that $\omega({\bf z})$ converges to $q$
and
 ${\bf z}$ converges to
some $p\in \overline{{\bf x}^1}\cap\partial_{GM}\teich(S)$.
Fix $[G_p]\in \mathcal{AF}(p)$.
From Proposition \ref{prop:rigidity_2},
we have $\nullsets{S}{q}=\nullsets{S}{[f_\omega(G_p)]}$.
From the construction of $G$,
$\trunc{G_p}$ is topologically equivalent to a subfoliation
of $G$.
Hence,
$q'\in \nullsets{S}{[f_\omega(G_p)]}=\nullsets{S}{q}$.
Since $q$ is taken arbitrarily from
$\overline{\omega({\bf x}^1)}\cap \partial_{GM}\teich(S)$,
from \eqref{eq:proof_of_claim:GH_1H_2-1},
we deduce that 
$q'\in \nullsets{S}{[H_1]}$
and
\begin{equation}
\label{eq:H_1_equal_to_f_omega_G-1}
\nullsets{S}{[H_1]}=\nullsets{S}{[f_\omega(G)]}.
\end{equation}

Since $\xi_\omega$ is an isometry,
$$
\visualindist(\xi_{\omega}({\bf x}^2))=
\xi_\omega(\visualindist({\bf x}^2)).
$$
Since $\xi_\omega$
extends to $\cl{\teich(S)}$ homeomorphically
and coincides with the action of $f_\omega$ on
$\mathcal{PMF}\subset \partial_{GM}\teich(S)$,
we have
\begin{align*}
\nullsets{S}{[H_2]}\cap \mathcal{PMF}
&=\accum_{S}(\xi_\omega({\bf x}^2))\cap \mathcal{PMF}
=\xi_\omega(\accum_{S}({\bf x}^2))\cap \mathcal{PMF} \\
&=\xi_\omega(\nullsets{S}{[G]})\cap \mathcal{PMF}
=\nullsets{S}{[f_\omega(G)]}\cap \mathcal{PMF}.
\end{align*}
This equality means that $\NMF(H_2)=\NMF(f_\omega(G))$.
By Corollary \ref{coro:topological_equivalence_null_set},
we have $\mathcal{N}(H_2)=\mathcal{N}(f_\omega(G))$
and $\nullsets{S}{[H_2]}=\nullsets{S}{[f_\omega(G)]}$.
\end{proof}
%

\begin{proof}[Proof of the injectivity of the homomorphism \eqref{eq:isomorphism}]
Let $\omega\in \ACAS(\teich(S))$
be in the kernel of $\Xi$.
From Proposition \ref{prop:rigidity_3},
there is an isometry $\xi_\omega$ which is close to $\omega$.
Since $\Xi(\xi_\omega)=\Xi(\omega)=id$,
$\xi_\omega$ is the identity mapping on $\teich(S)$,
and hence $\omega$ is close to the identity.
\end{proof}

%

\subsection{Rough homotheties on Teichm\"uller space}
\label{subsec:rough_homothety}
In this section,
we shall prove Theorem \ref{thm:Teichmullerspace_homothety}.
%
%

Suppose first that $\complexity{S}\ge 2$.
We may assume that $S$ is not a torus with two holes.
Suppose to the contrary that there is a $(K,D)$-rough
homothety $\omega$ with asymptotic quasi-inverse
for some $K\ne 1$.
Notice 
that $\omega\in \ACINV(\teich(S))$.
Take a homeomorphism $f_\omega$ on $S$ as Proposition \ref{prop:rigidity_2}.

Let $\alpha,\beta\in \mathcal{S}$.
Consider the projective classes $[\alpha]$ and $[\beta]$ as
points in $\partial_{GM}\teich(S)$.
Then,
from \eqref{eq:homothetic_implies_rough_gromov_product},
Theorem \ref{thm:induced-isomorphism} and Proposition \ref{prop:rigidity_2},
we have
\begin{equation}
\label{eq:K-homothety_1}
e^{-D_0}i_{x_0}([\alpha],[\beta])^{K}
\le
i_{x_0}([f_\omega(\alpha)],[f_\omega(\beta)])
\le
e^{D_0}i_{x_0}([\alpha],[\beta])^{K}
\end{equation}
where $D_0$ is a constant depending only on $D$ and
$d_T(x_0,\omega(x_0))$.
On the other hand,
let $K_0=e^{2d_T(x_0,\xi_\omega(x_0))}$,
where $\xi_\omega$ is an isometry associated to $\omega$
taken as Proposition \ref{prop:rigidity_3}.
From the definition,
$\ext_{x_0}(f_\omega(G))=\ext_{\xi_\omega^{-1}(x_0)}(G)$
for $G\in \mathcal{MF}$.
By the quasiconformal invariance of extremal length,
we obtain
$$
K_0^{-1}i_{x_0}([\alpha],[\beta])
\le
i_{x_0}([f_\omega(\alpha)],[f_\omega(\beta)])
\le 
K_0
i_{x_0}([\alpha],[\beta])
$$
since $f_\omega$ is a homeomorphism on $S$
and $i(f_\omega(\alpha),f_\omega(\beta))=i(\alpha,\beta)$.
Therefore,
we deduce
\begin{align}
i_{x_0}([\alpha],[\beta])^{1-K}
&\le K_0e^{D_0}
\label{eq:K-homothety_4} \\
i_{x_0}([\alpha],[\beta])^{K-1}
&\le K_0e^{D_0}
\label{eq:K-homothety_4_2}
\end{align}
for any $\alpha,\beta\in \mathcal{S}$ with $i_{x_0}([\alpha],[\beta])\ne 0$.
Since the left-hand sides of \eqref{eq:K-homothety_4}
and \eqref{eq:K-homothety_4_2}
are projectively invariant,
when the projective classes
$[\alpha],[\beta]$ tend together
to some projective measured foliation $[G]\in \mathcal{PMF}$
with keeping satisfying $i(\alpha,\beta)\ne 0$,
the left-hand side in \eqref{eq:K-homothety_4} diverges
if $K>1$,
otherwise
the left-hand side in \eqref{eq:K-homothety_4_2} diverges.
In any case,
we get a contradiction.

We now consider the case where $\complexity{S}=1$.
This case is indeed a prototype of our study.
In this case,
there is an isometry $\teich(S) \to \mathbb{D}$ sending $x_0$ to the origin $0$.
Furthermore,
the Gromov product $\gromov{x_1}{x_2}{0}$
for $x_1,x_2\in \mathbb{D}$ satisfies
\begin{equation}
\label{eq:linear_inequality}
|\gromov{x_1}{x_2}{0}-d_{\mathbb{D}}(0,[x_1,x_2])|\le D_1
\end{equation}
for some universal constant $D_1>0$,
where $[x_1,x_2]$ is the geodesic connecting between $x_1$ and $x_2$
(cf. \S2.33 in \cite{Vaisala}).

Suppose on the contrary that
there is a $(K,D)$-rough homothety  $\omega$ with $K\ne 1$.
Notice 
from the definition that any $\omega\in \ACINV(\mathbb{D})$
extends to a bijective mapping on $\partial \mathbb{D}$.
We can easily see that the extension is continuous,
and hence,
$\omega$ extends to a self-homeomorphism on $\partial \mathbb{D}$.
We may assume that $\omega(0)=0$.

From \eqref{eq:linear_inequality},
for $x_1,x_2\in \mathbb{D}$,
$$
|d_{\mathbb{D}}(0,[\omega(x_1),\omega(x_2)])-Kd_{\mathbb{D}}(0,[x_1,x_2])|\le D_2
$$
for some constant $D_2>0$.
Therefore,
for any $p_1,p_2\in \partial \mathbb{D}$,
we have
\begin{equation}
\label{eq:extension_K_Holder}
C_1|p_1-p_2|^K\le |\omega(p_1)-\omega(p_2)|\le C_2|p_1-p_2|^K
\end{equation}
with positive constants $C_1,C_2$.
If $K>1$,
$\omega$ is differentiable and the derivative is zero at any $\partial \mathbb{D}$.
Hence,
$\omega$ should be a constant on $\partial \mathbb{D}$,
which is a contradiction.
Suppose $K<1$.
Since the lift of a self-homeomorphism on $\partial \mathbb{D}$
to $\mathbb{R}$ is a monotone function,
the extension of $\omega$ to $\partial \mathbb{D}$
is differentiable almost everywhere on $\partial \mathbb{D}$.
However,
from \eqref{eq:extension_K_Holder},
$\omega$ is not differentiable any point on $\partial \mathbb{D}$.
This is also a contradiction.
\section{Appendix}
\label{sec:appendix}
The main result of this section
is Lemma \ref{lem:fills_curves}.
The estimates in the lemma looks similar to
that in Theorem 6.1 of \cite{GM}.
However,
our advantage here is that we treat
the extremal lengths of all non-trivial  (possibly peripheral) curves
of subsurfaces
and give a constant $C_\gamma$ concretely
(cf. \eqref{eq:lemma_extremal_length_3}
and \eqref{eq:comparison_extremal_length}).

\subsection{Measured foliations and intersection numbers}
Let $Q$ be a holomorphic quadratic differential on $X$.
The differential $|{\rm Re}\sqrt{Q}|$ defines a measured foliation on $X$.
We say that such a measured foliation the \emph{vertical foliation} of $Q$.
The vertical foliation of $-Q$ is called the \emph{horizontal foliation}
of $Q$.

By a \emph{step curve},
we mean a geodesic polygon in $X$ the sides of
which are horizontal and vertical arcs of $Q$
(cf. Figure \ref{fig:stepcurve}).
\begin{figure}[t]
\includegraphics[height=3cm]{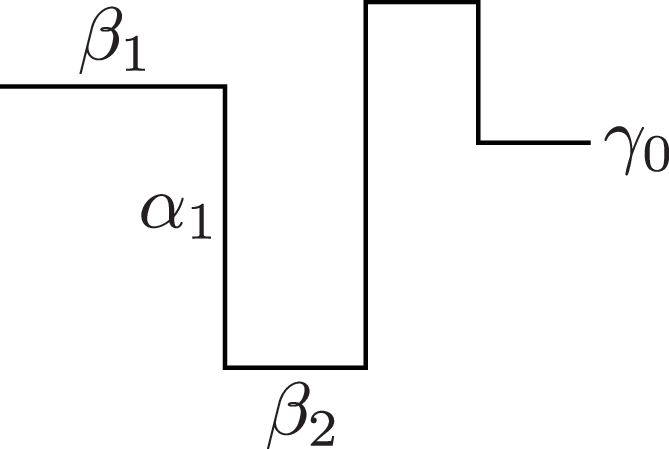}
\caption{A step curve with the property stated in
Proposition \ref{prop_strebel_stepcurve}.}
\label{fig:stepcurve}
\end{figure}
For the intersection number functions defined by the vertical foliations of
holomorphic quadratic differentials,
it is known the following.

\begin{proposition}[Theorem 24.1 of \cite{Strebel}] \label{prop_strebel_stepcurve}
Let $Q$ be a quadratic differential and $F$ the vertical foliation of $Q$.
Let $\gamma_0$ be a simple closed step curve with the additional property that
for any vertical side $\alpha_1$ of $\gamma_0$ the two neighboring
horizontal sides $\beta_1$ and $\beta_2$ are on different sides of $\alpha_1$
(there are no zeros of $Q$ on $\gamma_0$).
Then,
$$
i(\gamma,F)=\int_{\gamma_0}|{\rm Re}\sqrt{Q}|,
$$
where $\gamma$ is the homotopy class containing $\gamma_0$.
\end{proposition}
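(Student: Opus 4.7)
The plan is to use the standard measured-foliation interpretation of $Q$ together with a quasi-transversality argument. In natural coordinates $\zeta = \xi + i\eta$ of $Q$, so that $Q = d\zeta^2$ away from zeros, the vertical foliation $F$ has leaves $\{\xi = \text{const}\}$, and the transverse measure is precisely $|d\xi| = |\mathrm{Re}\sqrt{Q}|$. A horizontal side of $\gamma_0$ lies along a level set of $\eta$, on which $\int|\mathrm{Re}\sqrt{Q}|$ records its horizontal length; a vertical side lies along $\{\xi = \text{const}\}$ and contributes $0$. Thus
\[
\int_{\gamma_0}|\mathrm{Re}\sqrt{Q}| \;=\; \sum_{\text{horizontal sides of }\gamma_0} (\text{horizontal length}).
\]
This takes care of the computation of the right-hand side.

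Next I would establish the inequality $i(\gamma,F) \le \int_{\gamma_0}|\mathrm{Re}\sqrt{Q}|$ as the immediate direction, by recalling that for any representative $\gamma' \sim \gamma$, taken so that its interior meets leaves and critical graph transversally away from a finite set, one has $i(\gamma,F) \le \int_{\gamma'}|\mathrm{Re}\sqrt{Q}|$ (this is the usual transverse-measure formula for the Hubbard--Masur intersection number, applied to the foliation represented by $Q$). Specializing to $\gamma' = \gamma_0$ gives the inequality.

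The main step is the reverse inequality, and this is where the zigzag hypothesis is used. I would argue that under this hypothesis, $\gamma_0$ is \emph{quasi-transverse} to $F$: the vertical sides of $\gamma_0$ are contained in leaves of $F$, but at each endpoint of such a vertical segment the two adjoining horizontal sides leave on opposite sides, so the curve $\gamma_0$ never creates a ``fold'' along a leaf that could be pushed off to decrease the transverse measure. Concretely, if one perturbs $\gamma_0$ in a small neighborhood of a vertical side to a transverse curve, the gained transverse measure exactly cancels in pairs, leaving the integral unchanged in the limit. A standard cut-and-paste argument (replacing any arc of $\gamma' \sim \gamma$ having an extremum in the $\xi$-direction by a monotone arc) then shows that among representatives of $\gamma$ the infimum of $\int|\mathrm{Re}\sqrt{Q}|$ is realized by quasi-transverse curves, and any two quasi-transverse representatives of the same class have the same integral. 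Combining gives
\[
i(\gamma,F) \;=\; \int_{\gamma_0}|\mathrm{Re}\sqrt{Q}|.
\]

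The expected main obstacle is making the quasi-transverse reduction fully rigorous at the vertices of $\gamma_0$, especially in the presence of critical points of $Q$ nearby: one must verify that small local homotopies across a vertical side do not decrease the integral precisely because of the zigzag condition, and that a generic quasi-transverse representative computes $i(\gamma, F)$. This is exactly the content that forces the hypothesis in Strebel's formulation and that requires care rather than a short formal manipulation.
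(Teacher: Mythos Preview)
The paper does not prove this proposition; it is quoted as Theorem 24.1 of Strebel's book and stated without proof. The only additional remark the paper makes is that a step curve with the stated zigzag property is quasi-transversal, referring the reader to Expos\'e~5 of \cite{FLP} (Proposition II.6 and Figure~10, curve~(4)).

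Your proposal is essentially the standard argument behind Strebel's theorem and the FLP quasi-transversality criterion, so it is in line with the sources the paper cites. In particular, your identification of the key point --- that the zigzag condition prevents folds along vertical leaves and makes $\gamma_0$ quasi-transverse, so that it realizes the infimum defining $i(\gamma,F)$ --- is exactly what the paper's one-line remark is pointing to. There is nothing further to compare against in this paper.
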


It can be also observed that a step curve with the property
stated in Theorem \ref{prop_strebel_stepcurve} is quasi-transversal.
For instance, see the proof of Proposition II.6 or the curve (4) of Figure 10
of Expos\'e 5 in \cite{FLP}.

\subsection{Filling curves and Extremal length}
Let $X_0$ be an essential subsurface of $X$.
Denote by $\mathcal{S}(X_0)$ a subset of $\mathcal{S}$
consisting of curves which are non-peripheral in $X_0$.
Let $\mathcal{S}_\partial (X_0)$ be a subset of $\mathcal{S}$
consisting of curves which can be deformed into $X_0$.

\begin{lemma} \label{lem:fills_curves}
Let $X_0$ be a connected, compact and essential subsurface of $X$
with negative Euler characteristic.
Let $\{\alpha_i\}_{i=1}^m\subset \mathcal{S}(X_0)$
be a system of curves which fills up $X_0$.
Then,
for $\gamma\in \mathcal{S}_\partial(X_0)$,
we have
\begin{equation} \label{eq:lemma_extremal_length}
\ext_X(\gamma)\le
C_\gamma
\max_ {1\le i\le m}\ext_X(\alpha_i),
\end{equation}
where 
\begin{equation} \label{eq:lemma_extremal_length_3}
C_\gamma=C(g,n,m)\left(
\sum_{i=1}^m i(\alpha_i,\gamma)\right)^2
+4(6g-6+n)^2
\end{equation}
and
$C(g,n,m)$ depends only on the topological type $(g,n)$
of $X$ and the number $m$ of the system
$\{\alpha_i\}_{i=1}^m$.
In particular,
we have
\begin{equation} \label{eq:lemma_extremal_length_2}
\ext_X(F)\le
C(g,n,m)\left(
\sum_{i=1}^m i(\alpha_i,F)\right)^2
\max_ {1\le i\le m}\ext_X(\alpha_i),
\end{equation}
for all $F\in \mathcal{MF}(X_0)\subset \mathcal{MF}$.
\end{lemma}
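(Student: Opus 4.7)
My plan is to follow the strategy of Theorem 6.1 in \cite{GM}, adapted so as to extract an explicit constant and to accommodate curves $\gamma$ which may be peripheral in $X_0$. The starting point is the dual characterization of extremal length recorded in \eqref{eq:extremallength_on_GMInv},
\[
\ext_X(\gamma) = \sup_{F \in \mathcal{MF}-\{0\}} \frac{i(\gamma, F)^2}{\ext_X(F)},
\]
so it suffices to prove that
\[
i(\gamma, F)^2 \le C_\gamma \cdot \max_{1\le i \le m} \ext_X(\alpha_i)\cdot \ext_X(F)
\]
for every $F\in \mathcal{MF}-\{0\}$. Once this is done, \eqref{eq:lemma_extremal_length_2} for general $F \in \mathcal{MF}(X_0)$ follows by density of weighted simple closed curves in $\mathcal{MF}$ and joint continuity of the intersection number. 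Fix such an $F$ and let $Q_F := J_{F,X}$ be its Hubbard--Masur differential, so that $F$ is the vertical foliation of $Q_F$ and $\|Q_F\| = \ext_X(F)$.

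The core step is to realize $\gamma$ by a closed quasi-transverse step-curve $\gamma_0$ of $Q_F$ satisfying the one-sided hypothesis of Proposition \ref{prop_strebel_stepcurve}, so that $i(\gamma, F) = \int_{\gamma_0} |{\rm Re}\sqrt{Q_F}|$. When $\gamma$ is non-peripheral in $X_0$, I use the filling property to homotope $\gamma$ into a small regular neighborhood of $\bigcup_i \alpha_i$ inside $X_0$, with exactly $i(\alpha_i, \gamma)$ transverse crossings of each $\alpha_i$, and then replace its local pieces by horizontal and vertical segments of $Q_F$. Each horizontal jump near $\alpha_i$ has $|{\rm Re}\sqrt{Q_F}|$-mass controlled by the horizontal width of a slab around $\alpha_i$, which is bounded above by $i(\alpha_i, F)$ up to a combinatorial factor $c(g,n,m)$ depending only on the cellular decomposition of $X_0$ cut out by $\bigcup_i \alpha_i$. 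Summing over $i$ and using the Cauchy--Schwarz inequality together with Minsky's inequality $i(\alpha_i, F)^2 \le \ext_X(\alpha_i)\ext_X(F)$ from \eqref{eq:Minsky_inequality} then yields the first term $C(g,n,m)\bigl(\sum_i i(\alpha_i, \gamma)\bigr)^2 \max_i \ext_X(\alpha_i)\cdot\ext_X(F)$ of $C_\gamma\cdot\max_i\ext_X(\alpha_i)\cdot\ext_X(F)$.

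The peripheral case, in which $\gamma$ is homotopic to a component of $\partial X_0$ and all $i(\alpha_i, \gamma)$ may vanish, is handled separately: one must close up $\gamma_0$ along a cycle of the critical graph of $Q_F$ that encircles the relevant boundary component. The number of edges in such a cycle is controlled by the total multiplicity of singularities of $Q_F$, namely at most $6g-6+n$, and each edge has $|{\rm Re}\sqrt{Q_F}|$-mass at most $\sqrt{\ext_X(F)\max_i \ext_X(\alpha_i)}$ by applying Minsky's inequality to a crossing of some $\alpha_i$ adjacent to that edge. Summing and squaring then produces the supplementary term $4(6g-6+n)^2 \max_i \ext_X(\alpha_i)\cdot \ext_X(F)$ in $C_\gamma$.

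The main obstacle is the careful construction of $\gamma_0$: one must ensure it satisfies the one-sided hypothesis of Proposition \ref{prop_strebel_stepcurve} (possibly after a Whitehead-type perturbation of the critical graph), and that its horizontal jumps near each $\alpha_i$ genuinely have horizontal mass bounded by $i(\alpha_i, F)$ up to the combinatorial factor $c(g,n,m)$. The peripheral case is further delicate because one must verify that the closing cycle can be arranged to consist of at most $6g-6+n$ arcs of controlled $|Q_F|$-length, without inadvertently traversing the interior of $X_0$ along long vertical paths that would spoil the estimate.
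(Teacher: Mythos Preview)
Your approach is genuinely different from the paper's, and the peripheral case contains a real gap.

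The paper does \emph{not} dualize via \eqref{eq:extremallength_on_GMInv} and does not work with $Q_F=J_{F,X}$ for a variable test foliation $F$. Instead it fixes the Jenkins--Strebel differential $J_\gamma$ for $\gamma$ itself (in the non-peripheral case, a perturbation $J_\gamma^\epsilon$ with small weights on the curves of $\partial X_0$). In that single flat metric $\gamma$ is a closed trajectory, one has $\ext_X(\gamma)=\ell_{J_\gamma}(\gamma)^2/\|J_\gamma\|$, and automatically $\ell_{J_\gamma}(\alpha_i)^2/\|J_\gamma\|\le \ext_X(\alpha_i)$. The problem thus reduces to a length comparison $\ell_{J_\gamma}(\gamma)\le C\max_i\ell_{J_\gamma}(\alpha_i)$ in one fixed flat structure, which the paper proves by covering arguments: in the peripheral case, one boundary circle of the characteristic annulus of $J_\gamma$ is shown to be contained in $\bigcup_i\alpha_i^*$ (Claim 1), giving $\ell_{J_\gamma}(\gamma)\le 2(6g-6+n)\max_i\ell_{J_\gamma}(\alpha_i)$; in the non-peripheral case, each component of $\gamma^{\epsilon,*}\setminus\bigcup_i\alpha_i^{\epsilon,*}$ is bounded by a horizontal-ray argument (Claim 2). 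Minsky's inequality is never invoked.

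Your peripheral argument breaks down. If the ``critical graph of $Q_F$'' means the union of vertical saddle connections (the standard meaning), then every edge has $|{\rm Re}\sqrt{Q_F}|$-mass zero, so a step curve $\gamma_0$ lying in this graph would give $i(\gamma,F)=\int_{\gamma_0}|{\rm Re}\sqrt{Q_F}|=0$; but a component $\gamma$ of $\partial X_0$ can certainly have $i(\gamma,F)>0$, so no such $\gamma_0$ exists. Even under a looser interpretation, the assertion that ``each edge has $|{\rm Re}\sqrt{Q_F}|$-mass at most $\sqrt{\ext_X(F)\max_i\ext_X(\alpha_i)}$ by applying Minsky's inequality to a crossing of some $\alpha_i$ adjacent to that edge'' is unsupported: the saddle connections of $Q_F$ are determined by $F$ alone and have no a priori relationship to the $\alpha_i$; Minsky's inequality controls $i(\alpha_i,F)$, not the length of an individual saddle connection of $Q_F$. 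The paper sidesteps this entirely by working in the $J_\gamma$-metric, where the $\alpha_i^*$ are forced into the critical graph (since $i(\alpha_i,\gamma)=0$) and the covering statement of Claim 1 becomes meaningful.
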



\begin{proof}
Let $\gamma\in \mathcal{S}_\partial(X_0)$.
We divide the proof into two cases.

\medskip
\noindent
\paragraph{{\bf Case 1 : $\gamma$ is peripheral in $X_0$.}}
Suppose first that
$\gamma$ is represented by a component of $\partial X_0$.
When $\gamma$ is homotopic to a puncture of $X$,
$\ext_X(\gamma)=0$ since $X$ contains an arbitrary wide annulus whose
core is homotopic to $\gamma$.
Hence we have nothing to do
(in fact,
we can set $C_\gamma=0$).

Suppose that $\gamma$ is not peripheral in $X$.
Let $J_\gamma$ be a Jenkins-Strebel differential for $\gamma$ on $X$.
Let $A_\gamma$ be the characteristic annulus of $J_\gamma$.
We consider a ``compactification" $\overline{A_\gamma}$ by attaching
two copies of circles as its boundaries.
The induced flat structure on $A_\gamma$ from $J_\gamma$
canonically extends to the compactification $\overline{A_\gamma}$
and components of
the boundary $\partial \overline{A_\gamma}$ are closed regular trajectories
under this flat structure.
There is 
a canonical surjection $I_\gamma:\overline{A_\gamma}\to \overline{X}$
(the completion of $X$ at the punctures).
Namely, $\overline{X}$ is reconstructed by identifying
disjoint vertical straight arcs in $\partial \overline{A_\gamma}$
along vertical saddle connections of $J_\gamma$.
(In this sense, $I_\gamma$ is a quotient map).
Without any confusion,
we may recognize the characteristic annulus $A_\gamma$ itself as a subset of $X$.

Let $\gamma^*$ and $\alpha_i^*$ be the core trajectory
in $A_\gamma$
and the geodesic representative of $\alpha_i$ with respect to $J_\gamma$
respectively.
Since $\gamma$ is parallel to $\partial X_0$,
by taking an isotopy,
we may assume that $\gamma^*$ is a component  of $\partial X_0$.
Furthermore,
since $\alpha_i\in \mathcal{S}(X_0)$,
$\gamma$ does not intersect any $\alpha_i$ for all $i$.
Hence,
each $\alpha_i^*$ consists of vertical saddle connections.
In other words,
$\alpha_i^*$ is contained in the critical graph
$\Sigma_\gamma=I_\gamma(\partial \overline{A_\gamma})$ of $J_\gamma$ in $X$,
which consists of vertical saddle connections of $J_\gamma$.

Let $\gamma_1$ and $\gamma_2$ be components of $\partial \overline{A_\gamma}$.
Each $\gamma_i^*:=I_\gamma(\gamma_i)$ is canonically recognized as a path in $\Sigma_\gamma$
consisting of vertical saddle connections.
We claim:

\medskip
\noindent
{\bf Claim 1.}\
One of $\gamma_i^*$,
say $\gamma_1^*$,
is contained in the union $\cup_{i=1}^m\alpha_i^*$.

\begin{proof}[Proof of Claim 1]
Suppose $\gamma_1^*\cap \alpha_i^*\ne \emptyset$
for some $i$
and $\gamma_1^*$ contains a vertical saddle connection $s_0$
such that $s_0\not\subset \alpha^*_i$ for all $i$.
Then,
$s_0$ intersects all $\alpha^*_i$ at most
at endpoints (critical points of $J_\gamma$).
Let ${\rm Int}(s_0)=s_0\setminus \partial s_0$.
Let $h_1$ be a horizontal arc in $A_\gamma$
starting at $p_1\in \gamma^*$
and terminating at a point of ${\rm Int}(s_0)$.
Since the both side of $s_0$ is in $A_\gamma$,
after $h_1$ passes through $s_0$,
$h_1$ terminates at a point $p_2\in \gamma^*$.
Let $\gamma_0^*$ be a segment of $\gamma^*$
connecting $p_1$ and $p_2$
 (cf. Figure \ref{fig:A_gamma}).
Set $\beta=\gamma_0^*\cup h_1$.
By definition,
$\beta$ does not intersects any $\alpha_i^*$
and hence
$i(\beta,\alpha_i)=0$
for all $i$.
\begin{figure}[t]
\includegraphics[height=5cm]{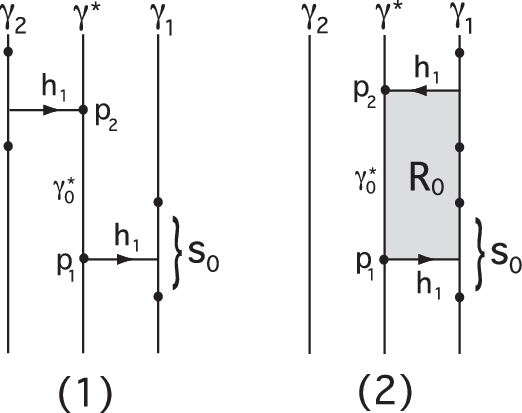}
\caption{Trajectories in $A_\gamma$.}
\label{fig:A_gamma}
\end{figure}

Suppose first that $h_1$ arrived $p_2$ from the different side
from that where $h_1$
departed at $p_1$
(cf. (1) of Figure \ref{fig:A_gamma}).
Then,
we have
$$
i(\gamma,\beta)=\int_{\beta}|{\rm Re}\sqrt{J_\gamma}|=1,
$$
since the width of $A_\gamma$ is one
and $\beta$ is a step curve with the property
stated in Proposition \ref{prop_strebel_stepcurve}.
Hence $\beta$ is non-trivial and non-peripheral simple closed curve in $X$.
However,
this contradicts that $\{\alpha_i\}_{i=1}^m$ fills up $X_0$,
since such a $\beta\cap X_0$
contains homotopically non-trivial arc connecting $\partial X_0$
because $\gamma$ is parallel to a component of $\partial X_0$.

Suppose $h_1$ arrived at $p_2$ from the same side
as that where $h_1$
departed
(cf. (2) of Figure \ref{fig:A_gamma}).
We may also assume that $h_1$ departs from $p_1$ into $X_0$.
Indeed,
suppose we cannot assume so.
Then,
the component of $\partial A_\gamma$
that lies on the same side as that of $X_0$ (near $\gamma^*$)
is covered by $\{\alpha_i^*\}_{i=1}^m$,
which contradicts what we assumed first.

Then,
there is an open rectangle $R_0$ in $A_\gamma$
such that $\beta$ and a segment in $\gamma_1$ surround $R_0$ in $A_\gamma$.
From the assumption,
we may assume that the closure of $I_\gamma(R_0)$,
say $X_1$,
intersects some $\alpha_i^*$.
Suppose that $\beta$ is trivial.
Then, $X_1$ is a disk in $X$
surrounded by $\beta$,
since $\gamma^*$ can be homotopic to the outside of
$X_1$.
This means that $\alpha_i^*$ is contained in a disk
$X_1$
because $\alpha_i^*$ does not intersect $\beta$,
which is a contradiction.
By the same argument,
we can see that $\beta$ is non-peripheral
(otherwise, $\alpha_i^*$ were peripheral).
Since $h_1$ departs into $X_0$ at $p_1$
and returns to $\gamma^*$ on the side where $X_0$ lies,
after taking an isotopy if necessary,
we can see that $h_1$ contains a subsegment which is nontrivial in $X_0$
and connecting $\partial X_0$,
which contradicts again
that $\{\alpha_i\}_{i=1}^m$ fills $X_0$ up.
\end{proof}

Let us continue to prove Lemma \ref{lem:fills_curves} for peripheral $\gamma
\in \mathcal{S}_\partial(X_0)$.
We take $\gamma_1^*$ as in Claim 1.
Since both sides of every vertical saddle connection face $A_\gamma$,
$\gamma_1^*$ visits each vertical saddle connection
at most twice.
Notice that the number of vertical saddle connections
is at most $6g-6+n$.
Since each vertical saddle connection in $\gamma_1^*$
is contained in some $\alpha_i^*$,
we have
\begin{align*}
\ell_{J_\gamma}(\gamma)
=\ell_{J_\gamma}(\gamma_1^*)
&\le 2(6g-6+n)\max\{\ell_{J_\gamma}(\alpha_i^*)\mid i=1,\cdots,n\}  \nonumber \\
&= 2(6g-6+n)\max\{\ell_{J_\gamma}(\alpha_i)\mid i=1,\cdots,n\},
\end{align*}
since $\alpha_i^*$ is the geodesic representative of $\alpha_i$.
Since the width of $A_\gamma$ is one,
from \eqref{eq:JS_extremallength_norm_length},
we conclude
\begin{align}
\ext_X(\gamma)
&=\ell_{J_\gamma}(\gamma)^2/\|J_\gamma\| \nonumber \\
&\le
4(6g-6+n)^2
\max_ {1\le i\le m}\{\ell_{J_\gamma}(\alpha_i)^2/\|J_\gamma\| \} \nonumber \\
&\le 
4(6g-6+n)^2
\max_ {1\le i\le m}\ext_X(\alpha_i).
\label{eq:extremal_length_peripheral}
\end{align}

\medskip
\noindent
\paragraph{{\bf Case 2 : $\gamma\in \mathcal{S}(X_0)$.}}
We next assume that $\gamma$ is not parallel to any component of $\partial X_0$.
Let $\{\beta_i\}_{i=1}^s$ be components of $\partial X_0$
each of which is non-peripheral in $X$.
Let $\epsilon>0$ and set
\begin{equation} \label{eq:epsilon_perturbation}
F_\epsilon=\gamma+\epsilon\sum_{i=1}^s\beta_s
\end{equation}
(cf. \cite{Ivanov}).
It is possible that two curves $\beta_{i_1}$ and $\beta_{i_2}$
are homotopic in $X$.
In this case,
we recognize $\beta_{i_1}+\beta_{i_2}=\beta_{i_1}$ in 
\eqref{eq:epsilon_perturbation}.
However,
for the simplicity of the discussion,
we shall assume that any two of $\{\beta_{i}\}_{i=1}^s$ are not isotopic.
The general case can be treated in a similar way.

Let $J_\gamma^\epsilon$ be the holomorphic quadratic differential on $X$
whose vertical foliation is $F_\epsilon$.
Since $F_\epsilon\to \gamma$ in $\mathcal{MF}$,
$J_\gamma^\epsilon$ tends to $J_\gamma$ in $\mathcal{Q}_X$
(cf. \cite{HM}. See also Theorem 21.3 in \cite{Strebel}).
Let $A_\gamma^\epsilon$
and $A_i^\epsilon$ denote the characteristic annuli
of $J_\gamma^\epsilon$ for $\gamma$ and $\beta_i$,
respectively.
Set $\gamma^{\epsilon,*}$ and $\beta_i^{\epsilon,*}$
to be closed trajectories in homotopic to $\gamma$ and $\beta_i$,
respectively.
Let $Y_0^\epsilon$ be the closure of
the component of
$\epsilon/4$-neighborhood of the cores
$\beta_i^{\epsilon,*}$,
containing $A_\gamma^\epsilon$.
By definition,
we may identify
$Y_0^\epsilon$ with $X_0$.
Let $\alpha^{\epsilon,*}_i$ be the geodesic representation of $\alpha_i$
with respect to $J_\gamma^\epsilon$.



We fix an orientation on $\gamma^{\epsilon,*}$.
Let $\xi$ be a component of
$\gamma^{\epsilon,*}\setminus \cup_{i=1}^m\alpha^{\epsilon,*}_i$.
Let $I_0(\xi)$ be the set of points $p\in \xi$
such that the horizontal ray $r_p$
departing at $p$ from the right of $\xi$
terminates at a curve
in $\{\alpha^{\epsilon,*}_i,\beta_j^{\epsilon,*}\}_{i,j}$
before intersecting $\xi$
twice.
Let $C_0(\xi)$ be the set of $p\in \xi$
such that $r_p$ terminates at a critical point of $J_\gamma^\epsilon$.
Then,
we claim

\medskip
\noindent
{\bf Claim 2.}\
$\xi\setminus I_0(\xi)\subset C_0(\xi)$,
and $I_0(\xi)\setminus C_0(\xi)$ is open in $\xi$.

\begin{proof}[Proof of Claim 2]
Let $p\in \xi\setminus I_0(\xi)$.
Suppose $p\not\in C_0(\xi)$.
Since the completion $\overline{X}$ with respect to the punctures
is closed,
$r_p$ is recurrent
(cf. \S10 of Chapter IV in \cite{Strebel}).
By the definition of $I_0(\xi)$ and $p\not\in I_0(\xi)$,
$r_p$ intersects $\xi$ at least twice
before intersecting  curves
in $\{\alpha^{\epsilon,*}_i,\beta_j^{\epsilon,*}\}_{i,j}$.
Hence,
$r_p$
contains
a consecutive horizontal segments $h_1$ and $h_2$
such that each $h_i$ intersects $\xi$ only at its endpoints,
and does not intersect any curves in
$\{\alpha^{\epsilon,*}_i,\beta_j^{\epsilon,*}\}_{i,j}$.

When one of the segments, say $h_1$,
connects both sides of $\xi$,
$\xi$ contains a vertical segment $v_1$ connecting endpoints of $h_i$,
and two trajectories $h_1$ and $v_1$ make a closed curve $\delta$ on $X$.
Since the two ends of $h_i$ terminate at $\xi$ from different sides,
the intersection number satisfies
$$
i(F_\epsilon,\delta)=\int_\delta |{\rm Re}\sqrt{J_\gamma^\epsilon}|
$$
and is greater than or equal to the width of $A_\gamma^\epsilon$,
by Proposition \ref{prop_strebel_stepcurve}.
Therefore $\delta$ is non-trivial and non-peripheral in $X$.
Since $h_i$ does not intersect $\beta_i^{\epsilon,*}$,
$\delta$ is contained in $Y_0^\epsilon$,
where we have {\color{black}identified} with $X_0$.
Furthermore,
$\delta$ is not peripheral in $X_0$
because $\delta$ has non-trivial intersection with $\gamma$.
By definition,
$\delta$ does not intersect all $\alpha_i$,
which is a contradiction because $\{\alpha_i\}_{i=1}^m$
fills $X_0$ up.

We assume that
two ends of each $h_i$ terminate at $\xi$ from the same side.
In this case,
we can also construct a simple closed step curve $\delta$
with the property
stated in
Proposition \ref{prop_strebel_stepcurve} from $h_1$, $h_2$ and a subsegment
of $\xi$ (cf. Figure \ref{fig:delta}).
\begin{figure}[t]
\includegraphics[height=5cm]{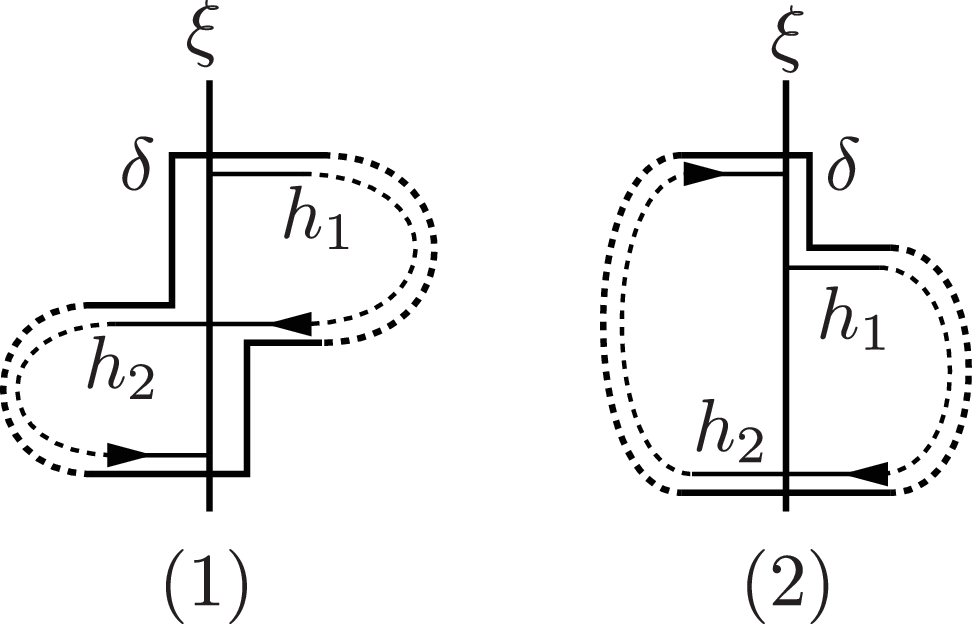}
\caption{How to get a closed curve $\delta$ :
There are two cases.
In the case (1), the initial point of $h_1$ and the terminal point of $h_2$
are separated by the terminal point of $h_1$.
The case (2) describes the other case.}
\label{fig:delta}
\end{figure}
This is a contradiction as above.
Thus we conclude that $\xi\setminus I_0(\xi)\subset C_0(\xi)$.

We show that $I_0(\xi)\setminus C_0(\xi)$
is open in $\xi$.
Let $p\in I_0(\xi)\setminus C_0(\xi)$
such that the horizontal ray $r_p$ defined above
does not terminate at critical points of $J_\gamma^\epsilon$.
By definition,
the horizontal ray $r_p$ terminate the interior of
a straight arc
contained in either $\alpha_i^{\epsilon,*}$
or $\beta_j^{\epsilon,*}$.
Hence,
when $p'\in \xi$ is in some small neighborhood of $p$,
$r_{p'}$ also terminates at such a straight arc,
and hence $p'\in I_0(\xi)$
for all point $p'$ in a small neighborhood of $p$.
\end{proof}

Let us return to the proof of Case 2 of the lemma.
Let $\xi$ be a component of  $\gamma^\epsilon\setminus \cup_{i=1}^m\alpha^{\epsilon,*}_i $.
By definition,
for $p\in I_0(\xi)$,
$r_p$ terminates at $\xi$
at most once
before intersecting curves in $\{\alpha_i^{\epsilon,*},\beta_j^{\epsilon,*}\}_{i,j}$.
Since any horizontal ray $r_p$ with $p\not\in C_0(\xi)$ can terminate
at a curve
in $\{\alpha^{\epsilon,*}_i,\beta_j^{\epsilon,*}\}_{i,j}$
from at most two sides.
Hence for almost all point $q$ in a curve
in $\{\alpha^{\epsilon,*}_i,\beta_j^{\epsilon,*}\}_{i,j}$,
there are at most $4$ points in $I_0(\xi)$
such that the horizontal rays emanating there land at $q$.
From Claim 2,
we get
$$
|\xi|=|I_0(\xi)\setminus C_0(\xi)|
\le 
4
\left(
\sum_{i=1}^m\ell_{J_\gamma^\epsilon}(\alpha_i) 
+
\sum_{i=1}^s\ell_{J_\gamma^\epsilon}(\beta_i)
\right),
$$
where $|\cdot |$ means linear measure.
Since $\alpha_i^{\epsilon,*}$
is the geodesic representative of $\alpha_i$,
the number of components of
$\gamma^{\epsilon,*}\setminus \cup_{i=1}^m\alpha^{\epsilon,*}_i$
is
$$
\sum_{i=1}^m i(\alpha_i,\gamma).
$$
Therefore,
we obtain
$$
\ell_{J_\gamma^\epsilon}(\gamma)
\le  
4
\left(
\sum_{i=1}^m i(\alpha_i,\gamma)
\right)
\left(
\sum_{i=1}^m\ell_{J_\gamma^\epsilon}(\alpha_i) 
+
\sum_{i=1}^s\ell_{J_\gamma^\epsilon}(\beta_i)
\right).
$$
Since $J_\gamma^\epsilon$ tends to $J_\gamma$
as $\epsilon\to 0$,
for all $\eta>0$ we find an $\epsilon>0$ such that
\begin{align*}
\ext_X(\gamma)
&=
\frac{\ell_{J_\gamma}(\gamma)^2}{\|J_\gamma\|}
\le
\frac{\ell_{J_\gamma^\epsilon}(\gamma)^2}{\|J^\epsilon_\gamma\|}+\eta \\
&\le 
16\left(
\sum_{i=1}^m i(\alpha_i,\gamma)
\right)^2(m+s)^2
\left(
\sum_{i=1}^m
\frac{\ell_{J_\gamma^\epsilon}(\alpha_i)^2}{\|J^\epsilon_\gamma\|}
+
\sum_{i=1}^s
\frac{\ell_{J_\gamma^\epsilon}(\beta_i)^2}{\|J^\epsilon_\gamma\|}
\right)+\eta \\
&\le 
C'_\gamma
\max_ {1\le i\le m}\ext_X(\alpha_i)
+\eta
\end{align*}
by Corollary 21.2 in \cite{Strebel}
and the Cauchy-Schwarz inequality,
where
\begin{equation*}
C'_\gamma
=16(m+s)^2(m+4s(6g-6+n)^2)\left(
\sum_{i=1}^m i(\alpha_i,\gamma)\right)^2.
\end{equation*}
from \eqref{eq:extremal_length_peripheral}.

Notice that the number $s$ of components
of $\partial X_0$ satisfies $s\le 2g+n$.
Indeed,
we fix a hyperbolic metric on $X$ and
realize $X_0$ as a convex hyperbolic subsurface of $X$.
Let $g'$ and $n'$ be the genus and the number of punctures in $X_0$.
Since $X_0\subset X$
and $X_0$ is essential,
by comparing to the hyperbolic area,
we have
\begin{align*}
2\pi(s-2)&\le 2\pi (2g'-2+s+n')
={\rm Area}(X_0) \\
&\le {\rm Area}(X)=2\pi(2g-2+n),
\end{align*}
and hence $s\le 2g+n$.

Thus,
by \eqref{eq:extremal_length_peripheral},
we conclude that
\eqref{eq:lemma_extremal_length} holds with
\begin{equation} \label{eq:comparison_extremal_length}
C(g,n,m):=
16(m+2g+n)^2(m+4(2g+n)(6g-6+n)^2),
\end{equation}
which implies what we wanted.
\end{proof}


\begin{thebibliography}{99}

\bibitem{ABEM}
J. Athreya, A. Bufetov, A. Eskin and M. Mirzakhani,
Lattice point asymptotics and volume growth on Teichm\"uller space,
Duke Math. {\bf 161} (2012),
1055--1111.

\bibitem{Ahlfors}
L. Ahlfors,
\emph{Lectures on quasiconformal mappings},
University Lecture Series, {\bf 38}.
American Mathematical Society, Providence, RI, (2006).

\bibitem{ABCFLMSS}
J. Alonso,
T. Brady,
D. Cooper,
V. Ferlini,
M. Lustig,
M. Mihalik,
M. Shapiro,
H. Short,
Notes on word hyperbolic groups,
\emph{Group theory from a geometrical viewpoint} (Trieste, 1990),
3–63, World Sci. Publ., River Edge, NJ,
(1991).

\bibitem{BF}
M. Bridson and A. Haefliger,
Metric spaces of non-positive curvature,
Grundlehren der Mathematischen Wissenschaften,
{\bf 319},
Springer-Verlag, Berlin (1999).

\bibitem{Buyalo_Schroeder}
S. Buyalo and V. Schroeder,
\emph{Elements of Asymptotic Geometry},
Monograph in Mathematics, European Mathematical Society (2007).

\bibitem{DLR}
M. Duchin, C.J. Leininger, and K. Rafi,
Length spectra and degeneration of flat metrics,
Invent. Math. {\bf 182} (2010), 231--277.

\bibitem{FLP}
A. Douady,
A. Fathi,
D. Fried,
F. Laudenbach,
V. Po\'enaru,
and
M. Shub,
\emph{Travaux de Thurston sur les surfaces},
S\'eminaire Orsay (seconde \'edition).
Ast\'erisque No. 66-67,
Soci\'et\'e Math\'ematique de France, Paris
 (1991).

\bibitem{EK}
C. Earle and I. Kra,
On isometries between Teichm\"uller spaces,
Duke Math. J. {\bf 41} (1974),
583--591.

\bibitem{EFW}
A. Eskin, D. Fisher and K. Whyte,
Quasi-isometries and Rigidity of Solvable groups,
Pure and Applied Mathematics Quarterly
{\bf 3} (2007), 
927--947.

\bibitem{GM}
F. Gardiner and H. Masur,
Extremal length geometry of Teichm\"uller space.
Complex Variables Theory Appl. {\bf 16} (1991), no. 2-3,
209--237. 

\bibitem{Gromov}
M. Gromov,
Hyperbolic groups.
\emph{Essays in group theory}, Math. Sci. Res. Inst. Publ., 8, Springer, New York, (1987),
75--263.

\bibitem{Gromov2}
M. Gromov,
Asymptotic invariants of infinite groups. Geometric group theory, Vol. {\bf 2} (Sussex, 1991),
London Math. Soc. Lecture Note Ser., {\bf 182},
Cambridge Univ. Press, Cambridge (1993)

\bibitem{HM}
J. Hubbard,
and H. Masur,
Quadratic differentials and foliations,
Acta Math.
{\bf 142}  (1979), no. 3-4, 221--274. 

\bibitem{IT}
Y. Imayoshi and M. Taniguchi,
\emph{Introduction to Teichm\"uller spaces},
Springer-Verlag (1992).

\bibitem{Ivanov}
N. V. Ivanov,
\emph{Subgroups of Teichm\"uller modular groups},
Translations of Mathematical Monographs, {\bf 115}.
American Mathematical Society, Providence, RI (1992).

\bibitem{Ivanov2}
N. Ivanov,
Isometries of Teichm\"uller spaces from the point of view of Mostow rigidity,
\emph{Topology, Ergodic Theory, Real Algebraic Geometry}
(eds.  Turaev, V., Vershik, A.),
131--149,
Amer. Math. Soc. Transl. Ser. 2, Vol {\bf 202},
American Mathematical Society (2001).

\bibitem{KB}
I. Kapovich and N. Nenakli,
Bounaries of hyperbolic groups,
Contemp. Math., {\bf 296},
39--93,
Amer. Math. Soc., Providence, RI, 2002.

\bibitem{KL}
B. Kleinier and B. Leeb,
Rigidity of quasi-isometries for symmetric spaces and Euclidean buildings,
Publ. I.H.E.S {\bf  86} (1997),
115--197.

\bibitem{Korkmaz}
M. Korkmaz,
Automorphisms of complexes of curves on punctured spheres and on punctured tori. Topology Appl. {\bf 95} (1999),
85--111.
 
\bibitem{Ker}
S. Kerckhoff,
The asymptotic geometry of Teichm\"uller space,
Topology {\bf 19}, 23--41 (1980).

\bibitem{LM}
A. Lenzhen and H. Masur,
Criteria for the divergence of pairs of Teichm\"uller geodesics,
Geom. Dedicata {\bf 144},
191--210 (2010).

\bibitem{LiuSu}
L. Liu and W. Su,
The horofunction compactification of Teichm\"uller metric,
to appear in \emph{Handbook of Teichm\"uller theory}
(A. Papadopoulos, ed.),
Volume IV,
EMS Publishing House, Z\"urich (2014).


\bibitem{Luo}
F. Luo,
Automorphisms of the complex of curves. Topology {\bf 39} (2000),
283--298. 

\bibitem{MP}
J. McCarthy and A. Papadopoulos,
The visual sphere of Teichm\"uller space and a theorem
of Masur-Wolf,
Ann. Acad. Sci. Fenn. {\bf 24} (1999),
147--154.

\bibitem{Masur1}
H.Masur,
On a Class of Geodesics in Teichm\"uller space,
Ann. of Math. {\bf 102} (1975),
205--221

\bibitem{Masur3}
H. Masur,
Uniquely ergodic quadratic differentials,
Comment. Math. Helv. {\bf 55} (1980),
255--266.

\bibitem{MW}
H. Masur
and
M. Wolf,
Teichm\"uller space is not Gromov hyperbolic,
Ann. Acad. Sci. Fenn. Ser. A I Math., {\bf 20}
(1995), 259--267.

\bibitem{Minsky2}
Y.Minsky,
Teichm\"uller geodesics and ends of hyperbolic $3$-manifolds,
Topology {\bf 32} (1993),
625--647.


\bibitem{Minsky1}
Y.Minsky,
Extremal length estimates and product regions in Teichm\"uller space,
Duke Math. {\bf 83} (1996),
249--286.


\bibitem{Mi1}
H. Miyachi,
Teichm\"uller rays and the Gardiner-Masur boundary of Teichm\"uller space.
Geom. Dedicata {\bf 137} (2008), 113--141. 

\bibitem{Mi2}
H. Miyachi,
Teichm\"uller rays and the Gardiner-Masur boundary of Teichm\"uller space II
Geom. Dedicata {\bf 162} (2013), 283--304


\bibitem{Mi4}
H. Miyachi,
Lipschitz algebra and compactifications of Teichm\"uller space,
submitted.

\bibitem{Mi5}
H. Miyachi,
Unification of extremal length geometry on Teichm\"uller space via
intersection number,
Mathematische Zeitschrift 278 (2014), 1065-1095.

\bibitem{Mi6}
H. Miyachi,
A rigidity theorem for holomorphic disks in Teichm\"uller spaces ,
To appear in Proc. Amer. Math. Soc. 

\bibitem{Papadopoulos}
A. Papadopoulos,
A rigidity theorem for the mapping class group acting on
the space of unmeasured foliations on a surface,
Proc. Amer. Math. Soc. {\bf 136} (2008),
4453--4460.

\bibitem{Ohshika}
K. Ohshika,
A note on the rigidity of unmeasured lamination spaces,
Proc. Amer. Math. Soc. {\bf 141} (2013),
4385--4389.

\bibitem{Ohshika2}
K. Ohshika,
Reduced Bers boundaries of Teichm\"uller spaces
Annales de l'institut Fourier, {\bf 64} (2014),
145--176

\bibitem{Li_Qi}
Z. Li and Y. Qi,
Fundamental inequalities of Reich-Strebel and
triangles in a Teichm\"uller space,
Contemp. Math. {\bf 575} (2012),
283--297.


\bibitem{Rees}
M. Rees,
An alternative approach to the ergodic theory of measured foliations on surfaces.
Ergodic Theory Dynamical Systems  {\bf 1}  (1981), no. 4, 461--488 (1982).

\bibitem{Strebel}
K. Strebel,
\emph{Quadratic Differentials},
Springer Verlag, Berlin and New York (1984). 

\bibitem{Vaisala}
J. V\"ais\"al\"a,
Gromov hyperbolic spaces,
Expo. Math. {\bf 23} (2005),
187--231.
\end{thebibliography}
\end{document}